\theoremstyle{plain}
\newtheorem{theorem}{Theorem}
\newtheorem{lemma}[theorem]{Lemma}
\newtheorem{proposition}[theorem]{Proposition}
\newtheorem{corollary}[theorem]{Corollary}
\newtheorem{conjecture}[theorem]{Conjecture}
\newtheorem{hypothesis}[theorem]{Hypothesis}
\numberwithin{theorem}{section}
\numberwithin{equation}{theorem}
\theoremstyle{definition}
\newtheorem{definition}[theorem]{Definition}
\newtheorem{setup}[theorem]{Setup}
\newtheorem{remark}[theorem]{Remark}
\newtheorem{question}[theorem]{Question}
\newtheorem*{question*}{Question}
\DeclareMathOperator{\RHom}{RHom}
\DeclareMathOperator{\End}{End}
\DeclareMathOperator{\Ext}{Ext}
\DeclareMathOperator{\Tor}{Tor}
\DeclareMathOperator{\Hom}{Hom}
\DeclareMathOperator{\GKdim}{GKdim}
\DeclareMathOperator{\gldim}{gldim}
\DeclareMathOperator{\Aut}{Aut}
\DeclareMathOperator{\gr}{gr}
\DeclareMathOperator{\Mod}{Mod}
\DeclareMathOperator{\uTor}{{\underline{Tor}}}
\DeclareMathOperator{\uExt}{{\underline{Ext}}}
\DeclareMathOperator{\uHom}{{\underline{Hom}}}
\def\Rpf{{\sf r}}
\def\Pty{{\sf p}}
\def\GKMod{\operatorname{Mod}}
\def\GKmod{\operatorname{mod}}
\def\mmod{\operatorname{mod}}
\def\QMod{\operatorname{QMod}}
\def\qmod{\operatorname{qmod}}
\def\GKgr{\operatorname{gr}}
\def\Gr{\operatorname{Gr}}
\def\QGr{\operatorname{QGr}}
\def\qgr{\operatorname{qgr}}
\newcommand{\inth}{\textstyle \int}
\begin{document}

\title[Pertinency and Quotient categories]
{Pertinency of Hopf actions and \\
Quotient categories of Cohen-Macaulay algebras}

\author{Y.-H. Bao, J.-W. He and J. J. Zhang}

\address{Bao: School of Mathematical Sciences,
Anhui University, Hefei, Anhui, 230601, China}

\email{baoyh@ahu.edu.cn, yhbao@ustc.edu.cn}

\address{He: Department of Mathematics,
Hangzhou Normal University,
%16 Xuelin Rd,
Hangzhou Zhejiang 310036, China}
\email{jwhe@hznu.edu.cn}

\address{Zhang: Department of Mathematics, Box 354350,
University of Washington, Seattle, Washington 98195, USA}

\email{zhang@math.washington.edu}

\begin{abstract}
We study invariants and quotient categories of fixed subrings
of Artin-Schelter regular algebras under Hopf algebra actions.
\end{abstract}

\subjclass[2000]{Primary 16D90, 16E65, 16E10, Secondary 16B50, 16E35}

%16B50 (1991-now) Category-theoretic methods and results
%16D90 (1991-now) Module categories [See also 16Gxx, 16S90];
%module theory in a category-theoretic context; Morita equivalence and duality
%16E10 (1991-now) Homological dimension
%16E35 (2010-now) Derived categories
%16E65 (2000-now) Homological conditions on rings (generalizations of
%regular, Gorenstein, Cohen-Macaulay rings, etc.)

\keywords{Artin-Schelter regular algebra,
quotient category, Auslander theorem,
Hopf algebra action, pertinency, Cohen-Macaulay property.}

%\thanks{ }

\maketitle

%\tableofcontents

\setcounter{section}{-1}
\section{Introduction}
\label{xxsec0}

Artin-Schelter regular (or AS regular, for short) algebras
[Definition \ref{xxdef1.8}] play an important
role in noncommutative algebraic geometry. We study some classes of graded
algebras that are closely related to AS regular algebras. In particular,
we are interested in algebraic and homological properties of fixed subrings of
AS regular algebras under semisimple
Hopf algebra actions, or geometrically, properties of
quotient singularities of noncommutative spaces under quantum group actions.
Some homological properties of Hopf actions on AS regular algebras were studied
in \cite{KKZ1}. Finite dimensional Hopf actions on connected graded AS regular
algebras of global dimension two with trivial homological determinant were
classified in \cite{CKWZ1}. In this paper we further investigate Hopf actions
on AS regular algebras and the correspondence quotient singularities in the
noncommutative setting.

One of most interesting features in this area is the \emph{McKay correspondence}.
The classical McKay correspondence connects many different topics in mathematics
such as the finite subgroups of ${\rm{SU}}(2,{\mathbb C})$, Dynkin diagrams of
$ADE$ type, simple Lie algebras, Kleinian singularities, $G$-Hilbert schemes and
so on. See \cite{Bu, Le1, Le2} for very nice recent surveys and historical accounts.
One ingredient in the classical or commutative McKay correspondence is the
Auslander Theorem \cite{Au1,Au2} that states that, {\it if $G$ is a
finite small subgroup of ${\rm{GL}}({\mathbb C}^{\oplus n})$ acting on
$R:={\mathbb C}[x_1,\cdots,x_n]$ naturally, then there is a natural
isomorphism of algebras
\begin{equation}
\label{E0.0.1}\tag{E0.0.1}
R\ast G\cong\End_{R^G}(R).
\end{equation}}
Recall that a finite subgroup $G\subseteq {\text{GL}}(V)$, for a finite
dimensional vector space $V$, is called {\it small} if $G$ does not contain
a pseudo-reflection of $V$.

Different extensions and noncommutative versions of the McKay correspondence
have been proposed by several researchers \cite{Ch, CKWZ2, Mor2, Le2, Wem}
and many nice results have been proven. In noncommutative
algebraic geometry, AS regular algebras are correct noncommutative analogues
of the commutative polynomial rings. One key ingredient in
noncommutative McKay correspondence is the Auslander Theorem
when $R$ is replaced by a noncommutative AS regular algebra.

It is desirable to prove the Auslander Theorem in
a general noncommutative setting. Some partial results can be found
in \cite{BHZ, CKWZ2, HVZ2, MU} and so on, under some extra hypotheses.
One of the main goals of this paper is to prove a noncommutative version
of the Auslander theorem that recovers most of the previous results
and to provide some conjectures in this research direction.

Let $R$ be an ${\mathbb N}$-graded algebra over a base field $\Bbbk$.
%\textcolor[rgb]{0.00,0.00,1.00}{of characteristic zero}.
We are taking one step further by replacing the group $G$ in \eqref{E0.0.1}
by a Hopf algebra $H$.
Let $H$ be a nontrivial semisimple Hopf algebra acting on $R$ homogeneously
and inner faithfully [Definition \ref{xxdef3.9}]. Let $R\# H$ denote
the smash product \cite[Definition 4.1.3]{Mon}. Since $H$ is semisimple,
the left and the right integrals of $H$ coincide.
Denote by $\inth$ the integral of $H$ such that $\varepsilon(\inth)=1$.
The main {\it dimension function} we are going to use here is the
Gelfand-Kirillov
dimension (also denoted by GK-dimension) -- see \cite{KL} or
Definition \ref{xxdef1.1}. A new invariant we
introduce in this paper is the following.

\begin{definition}
\label{xxdef0.1} Retain the above notation and suppose that
$\GKdim R<\infty$. The {\it pertinency}
of the $H$-action on $R$ is defined to be
$$\Pty(R,H)=\GKdim R-\GKdim ((R\# H)/I)$$
where $I$ is the 2-sided ideal of $R\# H$ generated by
$1\# \inth$.
\end{definition}

Fix an $R$ and an $H$, different $H$-actions on $R$ result in different
pertinency. This means that $\Pty(R,H)$ is an invariant of the $H$-action
on $R$, not just the pair $(R,H)$. When $H$ is the group algebra
$\Bbbk G$, we write
$\Pty(R,H)$  as $\Pty(R,G)$. If the $H$-action on $R$ is not
inner faithful, then it is easy to check that $\Pty(R,H)=0$.
If $H$ acts on a domain $R$ inner faithfully and $R\# H$ is prime, it is
automatic that $\Pty(R,H)\geq 1$ (see Lemma \ref{xxlem3.10}).
Note that $\Pty(R,H)=\GKdim R$ (or equivalently, $\GKdim ((R\# H)/I)
=0$) if and only if $R/R^H$ is a right $H^*$-dense Galois extension in
the sense of \cite{HVZ2}.

\begin{definition}
\label{xxdef0.2} Let $R$ be a noetherian algebra with finite GK-dimension. 
Suppose $H$ is a Hopf algebra with integral
$\inth$ acting on $R$ inner faithfully,
and let $M$ be a finitely generated right $R$-module.
\begin{enumerate}
\item[(1)]
The {\it grade} of $M$ is defined
to be
$$j(M):=\min\{i\mid \Ext_R^i(M,R)\neq0\}.$$
If $\Ext_R^i(M,R_R)=0$ for all $i$, then we say $j(M)=\infty$.
\item[(2)]
We say that the $H$-action on $R$ is \emph{homologically small}
if $j(R\# H/(I))\geq 2$, where $I$ is the 2-sided ideal
generated by $1\# \inth$.
\end{enumerate}
\end{definition}

In part of the paper, we use Cohen-Macaulay property [Definition \ref{xxdef1.4}(3)]
which is a homological property in the noncommutative setting
that is intimately connected with
the AS regularity. For example, all well-studied noetherian AS
regular algebras have been proven to be Cohen-Macaulay and it is
conjectured that the Cohen-Macaulay property is a consequence
of the AS regularity.

{\bf Setup:} \emph{For the rest of introduction, let $R$ denote a noetherian,
connected graded AS regular, Cohen-Macaulay
algebra of GK-dimension $\geq 2$.}

%\bigskip

In the commutative (graded) case, homological smallness
is equivalent to the (usual) smallness \cite[Lemma 7.2]{BHZ}.
Our first result connects the noncommutative Auslander theorem
with the pertinency.

\begin{theorem}
\label{xxthm0.3}
Let $H$ be a semisimple Hopf algebra acting on $R$ inner faithfully
and homogeneously. Then the following are equivalent.
\begin{enumerate}
\item[(1)]
$\Pty(R,H)\geq 2$.
\item[(2)]
The $H$-action on $R$ is homologically small.
\item[(3)]
There is a natural isomorphism of algebras $R\# H\cong \End_{R^H}(R)$.
\end{enumerate}
\end{theorem}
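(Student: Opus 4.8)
The plan is to establish the chain of equivalences by going around a cycle, exploiting the bridge provided by homological algebra over the Cohen--Macaulay AS regular algebra $R$ together with standard facts relating $R\#H$, $R^H$ and $\End_{R^H}(R)$. The key structural input is that, by semisimplicity of $H$, the idempotent $e=1\#\inth\in R\#H$ satisfies $e(R\#H)e\cong R^H$ and $R\cong (R\#H)e$ as $(R\#H,R^H)$-bimodules; consequently there is always a canonical algebra map $\gamma\colon R\#H\to\End_{R^H}(R)$, and the content of (3) is precisely that this $\gamma$ is an isomorphism. The engine that converts a GK-dimension (or grade) statement about the quotient $(R\#H)/I$ into the bijectivity of $\gamma$ is the exact sequence coming from the idempotent: $\gamma$ is injective iff the two-sided ideal $I=(R\#H)e(R\#H)$ has no overlap with $\ker\gamma$-type obstructions, and its cokernel is controlled by $\Ext$-groups of $(R\#H)/I$ over $R$. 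This is the same mechanism used in \cite{BHZ, HVZ2}, so I would quote or adapt those computations rather than redo them.

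\textbf{(1)}$\Leftrightarrow$\textbf{(2).} First I would record that $(R\#H)/I$ is a finitely generated $R$-module (even finitely generated as a right $R$-module, via the Hopf-module structure), so that both $\GKdim$ and the grade $j$ make sense and are linked by the Cohen--Macaulay property of $R$: for a finitely generated $R$-module $M$ one has $j_R(M)=\GKdim R-\GKdim M$. Applying this with $M=(R\#H)/I$ gives directly
$$j_R\big((R\#H)/I\big)=\GKdim R-\GKdim\big((R\#H)/I\big)=\Pty(R,H),$$
so ``$\Pty(R,H)\ge 2$'' is literally the same statement as ``$j_R((R\#H)/I)\ge 2$'', which is the definition of homological smallness. (One must be a little careful that the grade computed over $R$ agrees with the grade appearing in Definition \ref{xxdef0.2}(2); since $R\#H$ is finite over $R$ on both sides, the two grades coincide by the standard change-of-rings comparison for $\Ext$.)

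\textbf{(2)}$\Rightarrow$\textbf{(3).} Consider the canonical map $\gamma\colon R\#H\to\End_{R^H}(R_{R^H})$ and its kernel $K$ and cokernel $C$. Since $R$ is AS regular and Cohen--Macaulay of GK-dimension $\ge 2$, $R$ is in particular a reflexive (indeed maximal Cohen--Macaulay) module over $R^H$ away from the ``singular locus'', and the failure of $\gamma$ to be an isomorphism is supported on a small set: concretely, $K$ and $C$ are annihilated by $I$, hence are modules over $(R\#H)/I$, hence finitely generated $R$-modules of grade $\ge j_R((R\#H)/I)\ge 2$ by hypothesis. Now I would invoke the reflexivity/depth estimate: $\End_{R^H}(R)$ is a second-syzygy (reflexive) $R$-module, so it has grade-$1$ and grade-$2$ cohomological purity; a submodule or quotient that is supported in grade $\ge 2$ relative to $R$ must vanish once one knows the map $\gamma$ is already an isomorphism in codimension $\le 1$. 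The codimension-one (equivalently, localized at height-one primes, or after inverting a homogeneous element) statement is the classical small-group Auslander isomorphism, which holds because inner faithfulness forces the action to be ``small'' in codimension one — this is where I would cite \cite{CKWZ2} or \cite{BHZ}. Combining: $K=0$ and $C=0$, so $\gamma$ is an isomorphism.

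\textbf{(3)}$\Rightarrow$\textbf{(1).} If $\gamma$ is an isomorphism, then $I=(R\#H)e(R\#H)$ corresponds under $\gamma$ to the ideal of $\End_{R^H}(R)$ consisting of endomorphisms that factor through a projective (i.e. through $R^{\oplus\text{fin}}$), and the quotient $\End_{R^H}(R)/(\text{such})$ is, by definition, a module whose support measures exactly how far $R$ is from being projective over $R^H$ — i.e.\ the singular locus of $R^H$. By a standard argument (e.g.\ \cite{HVZ2}), this quotient has GK-dimension equal to the dimension of the singular locus, which under the Cohen--Macaulay/AS regular hypotheses is at most $\GKdim R-2$; equivalently $j_R\ge 2$, i.e.\ $\Pty(R,H)\ge 2$. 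Alternatively, and more cleanly, once (3) holds one runs the argument of (1)$\Leftrightarrow$(2) in reverse together with the fact that an isomorphism $\gamma$ forces $K=C=0$, and a short homological computation with the idempotent exact sequence shows $j_R((R\#H)/I)\ge 2$ automatically.

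\textbf{Main obstacle.} The delicate point is \textbf{(2)}$\Rightarrow$\textbf{(3)}: one must upgrade a pure codimension statement (grade $\ge 2$) about $K$ and $C$ into their vanishing, and this genuinely uses that $\End_{R^H}(R)$ is reflexive over $R^H$ together with the codimension-one Auslander isomorphism. Establishing that $R$ is reflexive (or that $R^H$ is normal, or that the relevant $\Ext^1$ vanishes) in the noncommutative graded setting — without the commutative normality arguments — is the technical heart, and I expect it to rely on the Cohen--Macaulay property of $R$ propagating to $R^H$ and on Auslander-regularity/Gorenstein-type duality for $R\#H$. Everything else is bookkeeping with the idempotent $e$ and the GK-dimension$\leftrightarrow$grade dictionary.
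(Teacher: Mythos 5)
Your reduction of (1)$\Leftrightarrow$(2) is essentially the paper's: since $(R\#H)/I$ is finitely generated over $R$ and $R$ is Cohen--Macaulay, the grade--GK-dimension dictionary $j_R(M)+\GKdim M=\GKdim R$ turns $\Pty(R,H)\ge 2$ into $j((R\#H)/I)\ge 2$, and this part is fine. The genuine gap is in the implications involving (3). For (2)$\Rightarrow$(3), the injectivity of $\gamma$ does follow along your lines (the kernel is a $B/I$-module of GK-dimension $\le d-2$, hence admits no nonzero map to $B$, hence cannot embed in $B$), but the surjectivity step is where your argument breaks down: you assert that the cokernel $C$ is killed by $I$ (true, but unproved in your text), and then you conclude $C=0$ by appealing to (i) reflexivity/second-syzygy purity of $\End_{R^H}(R)$ over $R^H$ and (ii) a ``codimension-one Auslander isomorphism'' obtained by localizing at height-one primes, citing \cite{CKWZ2,BHZ}. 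Neither ingredient is available here: localization at height-one primes is a commutative technique with no counterpart for these noncommutative graded algebras, \cite{CKWZ2} only treats the dimension-two case (where ``codimension one'' statements are empty of content for the present purpose), and the reflexivity or depth-two property of $\End_{R^H}(R)$ over the (merely AS Gorenstein, not regular) ring $R^H$ is precisely the kind of statement that has to be proved, not quoted. Your first argument for (3)$\Rightarrow$(1) (identifying $\End_{R^H}(R)/\gamma(I)$ with the singular locus and asserting its GK-dimension is $\le d-2$ ``under the CM/AS regular hypotheses'') is circular for the same reason; your parenthetical alternative gestures at the right computation but does not carry it out.

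For comparison, the paper closes exactly these holes by different means. It first proves that the CM property of $R$ passes to $B=R\#H$ (Proposition \ref{xxpro3.3}, via $\uExt_B^j(M,B)\cong(\uExt_R^j(M,R)\otimes H)^H$ and an argument with the integral); this is a nontrivial input your proposal never engages. Then Theorem \ref{xxthm2.4}/\ref{xxthm2.6} runs the cycle: (1)/(2) gives the quotient-category equivalence $\qgr_{d-2}R\#H\simeq\qgr_{d-2}R^H$ (Lemma \ref{xxlem2.3}, using the $\gamma_{d-2,1}$ condition from Lemma \ref{xxlem1.6}); surjectivity of $\gamma$ is then obtained not by a reflexivity argument but by computing endomorphism rings in the quotient categories, where Lemma \ref{xxlem2.2} (a spectral-sequence/adjunction argument) supplies the vanishing $\Hom_A(N,Be)=\Ext^1_A(N,Be)=0$ for $A$-modules $N$ of GK-dimension $\le d-2$ that replaces your missing ``purity'' of $\End_{R^H}(R)$. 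Finally (3)$\Rightarrow$(2) is a short direct computation: apply $\Hom_B(-,B)$ to $0\to BeB\to B\to B/BeB\to 0$ and use the adjunction $\Hom_B(Be\otimes_AeB,B)\cong\Hom_A(Be,Be)$ to read off $\Hom_B(B/BeB,B)=\Ext^1_B(B/BeB,B)=0$. If you want to salvage your outline, the surjectivity step must be rebuilt along one of these lines rather than by citation.
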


In Theorems \ref{xxthm0.3} and \ref{xxthm0.6} below,
the Cohen-Macaulay property is crucial while the AS
regularity is hidden in the background. Then, in
Theorem \ref{xxthm0.7}, the AS regularity is definitely
needed.

Theorem \ref{xxthm0.3} does not recover the classical Auslander
theorem, however, it extends most of previous results concerning
the noncommutative Auslander theorem in \cite{CKWZ3, HVZ2, MU}.
It is easier to manipulate the condition on $\Pty(R,H)$ than to
prove the isomorphism in Theorem \ref{xxthm0.3}(3) directly, see
\cite{BHZ}. This is one of the advantages of Theorem \ref{xxthm0.3}.

By the above theorem, checking the hypothesis on the pertinency
``$\Pty(R,H)\geq 2$'' becomes essential. In some cases, the
inequality $\Pty(R,H)\geq 2$ is checkable -- see
Theorem \ref{xxthm0.5} and \cite{BHZ}.
We do not have a general algorithm of calculating this
number. However, the following conjecture is expected.

\begin{conjecture}
\label{xxcon0.4}
Retain the hypotheses as in Theorem \ref{xxthm0.3}.
Suppose the $H$-action on $R$ has trivial homological
determinant in the sense of
\cite[{\rm{Definition 3.3}}]{KKZ1}. Then $\Pty(R,H)\geq 2$,
or equivalently, the $H$-action on $R$ is homologically
small.
\end{conjecture}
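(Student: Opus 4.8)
The plan is to invoke Theorem~\ref{xxthm0.3} to replace the desired inequality $\Pty(R,H)\geq 2$ by the Auslander isomorphism $R\# H\cong\End_{R^H}(R)$ of \eqref{E0.0.1}, and to prove that isomorphism by showing that the defect of the natural map has codimension at least two. Write $A=R\# H$, $S=R^H$ and $n=\GKdim R$, and let $\Phi\colon A\to\End_{S}(R)$ be the canonical homomorphism sending $a\# h$ to the right $S$-linear endomorphism $r\mapsto a(h\cdot r)$ of $R$ (this is $S$-linear since $h\cdot(rs)=(h\cdot r)s$ for $s\in R^H$). The kernel of $\Phi$ is $\mathrm{ann}_{A}(R)$, a two-sided ideal which, by inner faithfulness, vanishes (or at worst has $\GKdim\leq n-2$; compare \cite{MU,HVZ2}), and its cokernel measures the failure of \eqref{E0.0.1}. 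Under mild extra hypotheses — $R$ a domain and the action suitably outer, so that $A$ is prime — Lemma~\ref{xxlem3.10} provides the baseline $\Pty(R,H)\geq 1$, i.e. $\GKdim(A/I)\leq n-1$. By Theorem~\ref{xxthm0.3} together with the Cohen--Macaulay property of $R$, the remaining assertion $\Pty(R,H)\geq 2$ is then equivalent to $\GKdim(\operatorname{coker}\Phi)\leq n-2$; so everything reduces to bounding the support of the defect of the Auslander map by codimension two.

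This is where the trivial homological determinant hypothesis should be used. By the noncommutative Watanabe theorem \cite{KKZ1}, trivial $\hdet$ forces $S=R^H$ to be AS Gorenstein of GK-dimension $n$; then $R$ is a maximal Cohen--Macaulay $S$-module on both sides, so $A$ (free over $R$) and $\End_{S}(R)$ are reflexive $S$-modules, and one expects, by the usual reflexivity argument over the AS Gorenstein ring $S$, that a map of such modules bijective in codimension $\leq 1$ is bijective — so it suffices to prove $\Phi$ is an isomorphism ``in codimension one''. Since $R$ need not carry enough central elements to localize naively, I would carry this out homologically over the AS regular algebra $R$: graded local duality over $R$ gives $\Ext^{0}_{R}(A/I,R)=0$ (from $\GKdim(A/I)\leq n-1$ and the Cohen--Macaulay property) and identifies $\Ext^{1}_{R}(A/I,R)$ with the graded-dual of $H^{\,n-1}_{\fm}(A/I)$; applying local cohomology to $0\to I\to A\to A/I\to 0$ and using that $A$ is $R$-free (hence $H^{i}_{\fm}(A)=0$ for $i<n$) identifies $H^{\,n-1}_{\fm}(A/I)$ with $\ker\bigl(H^{\,n}_{\fm}(I)\to H^{\,n}_{\fm}(A)\bigr)$. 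The role of trivial $\hdet$ is that $H$ acts on the top local cohomology $H^{\,n}_{\fm}(R)$ by the same formula as on $R$, up to a degree shift — the potential twist being exactly $\hdet$ — so the $H$-action on $H^{\,n}_{\fm}(A)\cong H^{\,n}_{\fm}(R)\otimes_{\kk}H$ is \emph{untwisted}, and this equivariant symmetry is what one feeds in, together with the definition of $I$, to force the displayed kernel to be supported only on the locus where the $H$-action degenerates.

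The crux — and the step I expect to be the real obstacle — is to prove that this degeneration locus has codimension $\geq 2$. In the classical commutative group setting one uses that the codimension-one ramification of $R\to R^{G}$ is carried exactly by the fixed hyperplanes of pseudo-reflections in $G$, and that $G\subseteq\mathrm{SL}(V)$ contains no pseudo-reflection (a pseudo-reflection has nontrivial determinant), so the branch locus has codimension $\geq 2$; at the unramified primes a Galois-descent/Morita argument then makes the localized Auslander map an isomorphism. The noncommutative version requires a description of codimension-one ramification of a semisimple Hopf action on an AS regular algebra — a noncommutative ``purity of the branch locus'' — together with a precise dictionary showing that trivial homological determinant forbids codimension-one \emph{quasi-reflections} of the $H$-action; developing this is exactly what is missing at the generality of the conjecture. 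Granting such inputs one concludes $\GKdim(\operatorname{coker}\Phi)\leq n-2$, hence $\Pty(R,H)\geq 2$. The available evidence consists of the cases where the inputs can be supplied directly: $\GKdim R=2$ via the classification in \cite{CKWZ1}, and various group actions via \cite{BHZ, CKWZ3}.
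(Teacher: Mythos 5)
This statement is Conjecture \ref{xxcon0.4}, and the paper does not prove it: it is explicitly left open in general, being known only in special cases (the commutative case via the classical Auslander theorem, global dimension two via \cite{CKWZ1, CKWZ2}, and particular group actions such as Theorem \ref{xxthm0.5}/Theorem \ref{xxthm5.7}). Your proposal, by your own admission, is not a proof either: the crux --- that the ``degeneration/ramification locus'' of the $H$-action has codimension $\geq 2$, i.e.\ a noncommutative purity-of-the-branch-locus statement together with the dictionary ``trivial $\hdet$ forbids codimension-one quasi-reflection behaviour'' --- is precisely the mathematical content of the conjecture, and you supply no argument for it beyond the commutative analogy. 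Reducing the conjecture to an unproven statement of essentially the same depth is a strategy outline, not a proof, so there is a genuine and decisive gap at exactly the step you flag.

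Beyond that main gap, several intermediate claims would need repair even as a conditional argument. (i) You assert $\ker\Phi=\mathrm{ann}_A(R)$ vanishes ``by inner faithfulness''; the paper's Lemma \ref{xxlem3.10} only gives injectivity of $\Phi$ when $R\# H$ is prime (and that under the extra hypothesis that $R$ is a domain, which is not part of the Setup of Theorem \ref{xxthm0.3}) --- inner faithfulness is a consequence of injectivity there, not the other way around. (ii) Theorem \ref{xxthm0.3} equates $\Pty(R,H)\geq 2$ with $\Phi$ being an isomorphism, not with the kernel and cokernel of $\Phi$ having GK-dimension $\leq n-2$; to pass from ``isomorphism in codimension one'' to ``isomorphism'' you invoke a reflexivity principle over the AS Gorenstein ring $R^H$ that you only say ``one expects,'' and in the noncommutative graded setting this needs an actual argument (e.g.\ via $j(-)\geq 2$ and the CM(2) property, in the spirit of Theorem \ref{xxthm2.4}(iv)$\Rightarrow$(ii), which is in fact how the paper organizes the equivalences --- note the paper deduces the Auslander isomorphism \emph{from} a bound on $\GKdim (R\#H)/(1\#\inth)$, whereas you argue in the opposite direction and so cannot lean on Theorem \ref{xxthm0.3} for the equivalence you use). (iii) The local-cohomology/$\hdet$-equivariance paragraph is heuristic: identifying the obstruction with a kernel in top local cohomology and claiming the untwisted $H$-action ``forces'' it to be supported on a small locus is exactly where a precise noncommutative substitute for localization at height-one primes is missing. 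So the proposal correctly isolates why the problem is hard, but it does not close the conjecture, nor does it follow a route the paper itself completes.
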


When $R$ is the commutative polynomial ring and $H$ is a finite
group algebra, this conjecture holds as a consequence of the
classical Auslander theorem. In the noncommutative setting, this
has been verified in some special cases, but the conjecture is
open in general. If Conjecture \ref{xxcon0.4} holds, then, by
combining with Theorem \ref{xxthm0.3}, we have
the following version of the Auslander theorem:
\emph{Retain the hypotheses as in Theorem
{\rm{\ref{xxthm0.3}}} and further assume that
the $H$-action on $R$ has trivial homological determinant. Then
$R\# H\cong \End_{R^H}(R)$ naturally.}

Note that the aim of paper \cite{CKWZ2} is to establish a noncommutative McKay
correspondence for Hopf algebra actions on AS regular algebras of
dimension two, and one of the main results there is to verify
Conjecture \ref{xxcon0.4} when $\gldim R=2$.

As a consequence of Theorem \ref{xxthm0.3}, we have the following
result. For any graded algebra $A$, let $\Aut_{gr}(A)$
denote the group of all graded algebra automorphisms of $A$. Suppose
$n\geq 2$ and let $R:=\Bbbk_{-1}[x_1,\cdots,x_n]$ be the $(-1)$-skew polynomial
ring generated by $x_1,\cdots,x_n$ and subject to the
relations $x_ix_j=-x_jx_i$ for all $i\neq j$. Then $\Aut_{gr}(R)=
(\Bbbk^{\times})^n\rtimes S_n$ \cite[Lemma 1.12(1)]{KKZ2}.

\begin{theorem}
\label{xxthm0.5}
Let $\sigma$ be the automorphism of $R:=\Bbbk_{-1}[x_1,\cdots,x_n]$
determined by sending $x_i\mapsto x_{i+1}$
for all $1\leq i\leq n-1$ and $x_n\mapsto x_1$. Let $G$ be the group
generated by $\sigma$. Then the homological determinant of the
$G$-action on $R$ is trivial and $\Pty(R,G)\geq 2$. As a consequence,
$R\ast G\cong\End_{R^G}(R).$
\end{theorem}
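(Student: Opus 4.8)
\textbf{Proof proposal for Theorem \ref{xxthm0.5}.}

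The plan is to verify the two assertions—triviality of the homological determinant and $\Pty(R,G)\geq 2$—and then invoke Theorem \ref{xxthm0.3} for the final isomorphism. For the homological determinant: $R=\Bbbk_{-1}[x_1,\dots,x_n]$ is a noetherian AS regular (indeed Koszul) algebra of global dimension $n$, so its Nakayama automorphism and the $G$-action on the one-dimensional top local cohomology (equivalently, on $\Ext^n_R(\Bbbk,R)$) are computable explicitly. I would compute $\operatorname{hdet}$ on the generator of the top Ext group; since $\sigma$ is the cyclic permutation $x_i\mapsto x_{i+1}$, it permutes the $x_i$ and hence acts on the degree-$n$ "volume element" $x_1 x_2\cdots x_n$ (up to sign from the relations) by a signed permutation. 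One checks the sign cancels correctly—this is exactly the kind of computation in \cite[Lemma 1.12]{KKZ2} or \cite{KKZ1}—so $\operatorname{hdet}\sigma=1$ and the $G$-action has trivial homological determinant.

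The substantive part is $\Pty(R,G)\geq 2$, i.e. $\GKdim\bigl((R\# G)/I\bigr)\leq n-2$, where $I=(1\#\inth)$ and $\inth=\tfrac1{|G|}\sum_{g\in G}g$. Because $R$ is a domain and $R\# G$ is (generically) prime, Lemma \ref{xxlem3.10} already gives $\Pty(R,G)\geq 1$, so the task is to improve $n-1$ to $n-2$. The natural approach is to realize $(R\# G)/I$ as a quotient of $\End_{R^G}(R)$ or to work inside $\Hom_{R^G}(R,R)$: by the Morita-type considerations underlying Theorem \ref{xxthm0.3}, $(1\#\inth)(R\# G)(1\#\inth)\cong R^G$ and $(R\# G)/I$ governs the "non-free locus" of $R$ over $R^G$. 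Concretely, I would identify a suitable normal element or a homogeneous system of parameters in $R^G$ over which $R$ becomes free, so that the support of $(R\# G)/I$ as a module has codimension $\geq 2$; equivalently, I would show that the natural map $R\# G\to\End_{R^G}(R)$ has kernel and cokernel supported in GK-codimension $\geq 2$. A clean way: localize at the graded-prime ideals of $R^G$ of height $\leq 1$ and check that there the action is "free" (the isotropy is trivial in codimension $\leq 1$ because $\sigma$ has no eigenvalue-$1$ behavior forcing large fixed loci—the fixed points of powers of $\sigma$ on the relevant quotient variety have codimension $\geq 2$, reflecting that $\langle\sigma\rangle$ acts, in the commutative shadow, as a small group). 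This reduces the pertinency bound to a codimension estimate on fixed loci of the cyclic group $\langle\sigma\rangle$ acting on (the noncommutative analogue of) affine $n$-space, which one carries out by direct inspection of which monomials are fixed by $\sigma^k$ for $1\le k<n$.

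Finally, with trivial homological determinant and $\Pty(R,G)\geq 2$ in hand, Theorem \ref{xxthm0.3} (applied with $H=\Bbbk G$, which is semisimple as $\operatorname{char}\Bbbk\nmid|G|$—a hypothesis implicit in the setup) yields the natural algebra isomorphism $R\ast G=R\# G\cong\End_{R^G}(R)$. The main obstacle I anticipate is the codimension estimate for $(R\# G)/I$: bounding $\GKdim$ of this quotient requires understanding the ideal generated by $1\#\inth$ well enough to exhibit enough central (or normalizing) parameters killing it modulo a low-dimensional piece, and the $(-1)$-skew commutation relations make the bookkeeping of signs and fixed monomials delicate. The homological-determinant computation and the final invocation of Theorem \ref{xxthm0.3} are, by contrast, essentially formal once the pertinency inequality is established.
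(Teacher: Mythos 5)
Your reduction of the final isomorphism to Theorem \ref{xxthm0.3} (with $H=\Bbbk G$ semisimple since $\operatorname{char}\Bbbk\nmid |G|$, plus CM and inner faithfulness of the action) matches the paper, and the homological determinant computation is indeed a routine \cite{KKZ2}-type check. But the substantive claim, $\Pty(R,G)\geq 2$, is not actually proved in your proposal, and the strategy you sketch for it contains a genuine gap. You propose to bound the support of $(R\# G)/I$ by localizing at height $\leq 1$ graded primes of $R^G$ and arguing that ``the fixed points of powers of $\sigma$ on the relevant quotient variety have codimension $\geq 2$, reflecting that $\langle\sigma\rangle$ acts, in the commutative shadow, as a small group.'' This commutative-shadow heuristic is exactly what fails here: for $n=2$ the cycle $\sigma$ is a transposition, a genuine reflection in the commutative setting, and the paper points out that for the commutative polynomial ring $\Pty(\Bbbk[x_1,x_2],G)=1$. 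The whole point of Theorem \ref{xxthm0.5} (answering Kirkman's question) is that the $(-1)$-skew relations change the answer, so no argument that reduces to smallness of $\langle\sigma\rangle$ on affine $n$-space can work uniformly; moreover the paper has no machinery for ``checking freeness at height-one primes'' of $R^G$ in this noncommutative setting, and your alternative formulation (kernel and cokernel of $R\#G\to\End_{R^G}(R)$ small) is essentially the statement being targeted rather than an independent handle on it.

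What the paper actually does is an explicit GK-dimension estimate on $(R\#\Bbbk G)/(1\#\inth)$, viewing $\Bbbk G$ as $(\Bbbk \widehat{G})^{\circ}$ so that $R$ is graded by the character group: Lemma \ref{xxlem5.1}(3,4) shows that a product of group-homogeneous elements whose partial-product degrees exhaust the group lies in the ideal $I$; then, working in the eigenbasis $y_j=\sum_i\xi^{ij}x_i$ and the central subalgebra generated by the $x_i^2$, Lemma \ref{xxlem5.3}, Proposition \ref{xxpro5.4} and Theorem \ref{xxthm5.7}(1,3) produce enough nilpotent elements in the image $R'$ of $R$ in $(R\#\Bbbk G)/I$ (e.g.\ $y_{i_0}^n=0$ for $\gcd(i_0,n)=1$, and $y_s^{2(d+1)}=0$ when $n=2^d$) to force $\GKdim (R\#\Bbbk G)/I\leq n-2$ for every $n\geq 2$ (in fact $=0$ when $n=2^d$). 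Your proposal contains no analogue of this computation — no candidate central parameters killing $(R\#G)/I$ modulo a small piece are exhibited, and ``inspection of monomials fixed by $\sigma^k$'' does not engage the ideal $(1\#\inth)$ at all — so the pertinency inequality, and with it the theorem, remains unestablished.
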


Theorem \ref{xxthm0.5} answers a question of Ellen Kirkman
\cite{Ki}, see also Question \ref{xxque5.9}.
In contrast, when $R$ is a commutative polynomial ring
$\Bbbk[x_1,\cdots,x_n]$ and $G$ is generated by the permutation 
sending $x_i\mapsto x_{i+1}$ for all $1\leq i\leq n-1$
and $x_n\mapsto x_1$, then
$$\Pty(R,G)=\begin{cases} 1 & n=2, \\ \geq 2 & n>2.\end{cases}$$
So, in the commutative case, $G$ is not (homologically) small
when $n=2$, see \cite[Examples 7.3 and 7.4]{BHZ} for further comments.

Going back to a general (graded) algebra $A$,
we write $\Gr A$ for the category of right graded $A$-modules, and
$\gr A$ for the subcategory of finitely generated right graded $A$-modules. For
any integer $n$,  let $\Gr_n A$ (respectively, $\gr_n A$) be the subcategory of
$\Gr A$ (respectively, $\gr A$) consisting of objects with GK-dimension not larger
than $n$. Define the following quotient categories
$$\QGr_n B:=\frac{\Gr B}{\Gr_n B}\qquad \text{ and }\qquad
\qgr_n B:=\frac{\gr B}{\gr_n B}.$$
When $n=0$, the quotient category $\QGr_0 B$ (respectively, $\qgr_0 B$) is
just $\QGr B$ (respectively, $\qgr B$) defined in
noncommutative projective geometry
\cite{AZ} or ${\text{Tails}} \; B$ (respectively, ${\text{tails}} \; B$) used
by other authors \cite{MU, Ue, Mor1, Mor2}.

Our first result relating the pertinency and the
quotient category is the following.

\begin{theorem}
\label{xxthm0.6}
Retain the hypotheses as in Theorem {\rm{\ref{xxthm0.3}}}.
Let $\alpha$ be an integer $\leq \Pty(R,H)$ and assume  $\GKdim R=n$. Then there is an
equivalence of categories
$$\qgr_{n-\alpha} R^H\cong \qgr_{n-\alpha} R\# H.$$
\end{theorem}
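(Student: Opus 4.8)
The plan is to realize both quotient categories as module categories over an "abstract" quotient ring built from $R \# H$, and to compare the two via the standard adjunction $(- \otimes_{R^H} R, \ \mathrm{Hom}_{R^H}(R, -))$ between $R^H$-modules and $R\#H$-modules.

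First, recall the basic structural input. Since $H$ is semisimple with integral $\int$ satisfying $\varepsilon(\int) = 1$, the element $e := 1 \# \int$ is an idempotent in $R \# H$, and $e(R\#H)e \cong R^H$ via the natural map (this is the standard Montgomery identification). Thus $R^H = e(R\#H)e$ and $R$ itself is a left $R\#H$-module, right $R^H$-module bimodule, giving Morita-type functors between $\gr R^H$ and $\gr R\#H$ that are mutually inverse \emph{after we quotient out the failure of the idempotent $e$ to be full}. The measure of that failure is precisely the ideal $I = (R\#H)e(R\#H)$, and $\GKdim\bigl((R\#H)/I\bigr) = \GKdim R - \Pty(R,H) = n - \Pty(R,H) \leq n-\alpha$.

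Next, here are the steps I would carry out, in order.
\begin{enumerate}
\item[(1)] Show that the functor $- \otimes_{R^H} R : \gr R^H \to \gr R\#H$ and its right adjoint $\mathrm{Hom}_{R\#H}(R, -) = e \cdot (-) : \gr R\#H \to \gr R^H$ are exact (using semisimplicity of $H$, so $R$ is projective on the relevant side, and the standard fact that $e(R\#H)$ is a progenerator-like bimodule), so that they descend to functors on the GK-dimension-bounded quotient categories.
\item[(2)] Identify the kernels. The counit $\varepsilon_M : M \otimes_{R^H} R \otimes_{R\#H} ? \to M$ and the unit of this adjunction have kernel and cokernel that are annihilated by $I$ (this is the key algebraic computation: $R \otimes_{R^H} R \to R\#H$ has image $I$, more precisely the natural map $R\#H \otimes_{R^H} \cdots$), hence are modules over $(R\#H)/I$, hence have GK-dimension $\leq n - \Pty(R,H) \leq n-\alpha$. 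Therefore the unit and counit become isomorphisms in $\qgr_{n-\alpha}$.
\item[(3)] Conclude that $- \otimes_{R^H} R$ induces an equivalence $\qgr_{n-\alpha} R^H \cong \qgr_{n-\alpha} R\#H$. One must also check the functors genuinely send $\gr_{n-\alpha}$ into $\gr_{n-\alpha}$ on both sides; this follows because $R$ is finitely generated as a module over $R^H$ (as $H$ is finite-dimensional, so $R^H \subseteq R$ is a finite ring extension — a consequence of the Cohen-Macaulay/noetherian hypotheses together with finite-dimensionality of $H$), so tensoring and $\mathrm{Hom}$ do not increase GK-dimension, and $\GKdim$ of an $R\#H$-module equals its $\GKdim$ as an $R$-module.
\end{enumerate}

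The main obstacle I expect is step (2): precisely controlling the unit and counit of the adjunction and verifying that the discrepancy is a module over $(R\#H)/I$ rather than merely bounded in some weaker sense. The cleanest route is probably to use that $R$ is a \emph{finitely generated projective} right $R^H$-module when $H$ is semisimple (so that $\mathrm{Hom}_{R^H}(R,R)$ is computed nicely and the bimodule $_{R\#H}R_{R^H}$ is "almost invertible"), combined with the exact triangle relating $R\#H$, $I$, and $(R\#H)/I$ — tensoring this with an arbitrary module and taking the long exact sequence in $\mathrm{Tor}$, the terms involving $I$ contribute only modules whose GK-dimension is at most $\GKdim\bigl((R\#H)/I\bigr) \le n-\alpha$, because $I \cong (R\#H)e(R\#H)$ is, up to $R$-bimodule pieces of full GK-dimension that cancel, governed by the complement $(R\#H)/I$. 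A secondary subtlety is making sure that passing from $\Gr$ to $\gr$ and the localization at the Serre subcategory $\gr_{n-\alpha}$ commute with the functors; this is routine once the functors are shown exact and GK-dimension-nonincreasing, but it should be stated carefully, perhaps by invoking the general nonsense that an exact functor between abelian categories carrying a Serre subcategory into a Serre subcategory induces a functor on quotients, and that adjunctions descend.
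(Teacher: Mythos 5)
Your overall strategy is the same as the paper's (its Lemma \ref{xxlem2.3}, applied via Theorem \ref{xxthm3.7}): use the bimodules $R\cong (R\# H)e$ and $R\cong e(R\# H)$ with $e=1\#\inth$, note that the composite on the $R^H$-side is the identity on the nose, and show that the failure of the composite on the $R\# H$-side is built from finitely generated modules over $(R\# H)/I$, hence has GK-dimension at most $n-\Pty(R,H)\leq n-\alpha$. Your step (2) is essentially the paper's computation (kernel $K$ of $Be\otimes_{R^H}eB\to B$ satisfies $eK=Ke=0$, plus the $\Tor_1^{B}(M,B/I)$ and $M\otimes_B B/I$ terms); the only slip there is cosmetic: the kernel of the counit is an extension of two $B/I$-modules rather than literally annihilated by $I$, which is harmless for the GK-dimension bound.

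However, step (1) contains a genuine gap. Your claim that semisimplicity of $H$ makes $R$ a finitely generated \emph{projective} right $R^H$-module, so that $-\otimes_{R^H}R$ is exact, is false in general: already for $R=\Bbbk[x,y]$ and $G=\{\pm 1\}\subset{\rm SL}_2(\Bbbk)$ the module $R_{R^G}$ is maximal Cohen--Macaulay but not free; more structurally, graded projectivity would force $\End_{R^H}(R)$ to be a matrix-type algebra over $R^H$, of infinite global dimension whenever $R^H$ is singular, contradicting the finite global dimension of $R\# H$ in exactly the situations the theorem covers. So $-\otimes_{R^H}R$ is only right exact, and then ``an exact functor preserving Serre subcategories descends'' does not apply as stated. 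What is actually needed, and what your proposal does not supply, is control of $\uTor_1^{R^H}(N,R)$ for finitely generated graded $N$ with $\GKdim N\leq n-\alpha$: one must know this Tor again has GK-dimension $\leq n-\alpha$, so that a morphism with small kernel and cokernel stays such after tensoring (this is the content of the paper's Lemma \ref{xxlem1.3} together with the condition $\gamma_{n,1}$). The paper obtains this from its Lemma \ref{xxlem1.6}: in the noetherian locally finite graded setting, $\uTor_i^{R^H}(N,R)$ is a subquotient of $N\otimes_{R^H}F_i$ with $F_i$ finitely generated free, so its Hilbert function, and hence its GK-dimension, is bounded by that of finitely many shifts of $N$. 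Your proposed substitute --- the long exact sequence of $\Tor$ over $R\# H$ for $I\to R\# H\to (R\# H)/I$, with ``pieces of full GK-dimension cancelling'' --- computes Tor over the wrong ring and does not yield the required bound; replacing it by the Hilbert-series argument above closes the gap and brings your proof in line with the paper's.
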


Since $R$ is AS regular, $R\# H$ has finite global
dimension. If, in addition, $R$ is PI (namely, $R$ satisfies a
polynomial identity), then the quotient category $\qgr_{n-\alpha} R\# H$
has finite global dimension. Under the extra PI hypothesis, by the above
theorem, $\qgr_{n-\alpha} R^H$ has finite global dimension. When a
connected graded algebra $A$ is PI, it is reasonable to define the
dimension of the singular locus of $A$ to be
$$\dim_{sing}(A)=\min \{ i\mid \qgr_i A \; {\text{has finite global
dimension}}\}.$$
Using this definition and assuming $R$ is PI, we can bound the
$\dim_{sing}$ of the fixed subring $R^H$ by
\begin{equation}
\label{E0.6.1}\tag{E0.6.1}
\dim_{sing}(R^H)\leq n-\Pty(R,H)=\gldim R-\Pty(R,H).
\end{equation}
In particular, when $\Pty(R,H)=\gldim R$,
$R^H$ has graded isolated singularities in
the sense of Ueyama. 
Recall from \cite[Definition 2.2]{Ue} a noetherian connected graded algebra $A$ is a {\it graded isolated singularity} if
${\rm tails}A(=\qgr A)$ has finite global dimension. We give new examples of
noncommutative graded isolated singularities in Proposition \ref{xxpro5.4}.
Following the idea of the noncommutative crepant resolutions \cite{Le2},
$R\# H$ should be viewed a resolution of singularities of
$R^H$. Some ideas in this paper are motivated by recent papers
\cite{CKWZ1, CKWZ2, Le2, Mor1, Mor2, MU, HVZ2, Ue}.

Another result concerning quotient categories is a version of BGG
correspondence. Let $R$ be a Koszul AS regular algebra and $E$ the graded algebra
$$E:=\bigoplus_{i=0}^n \Ext_{R\# H}^i(S,S)$$
where $S$ is a direct sum of all non-isomorphic graded simple right
modules over $R\# H$ concentrated in degree zero. Since $R$ is Koszul,
so is $R\# H$. As a consequence, $E$ is Koszul. (All these are
${\mathbb N}$-graded and locally finite.) Since $R$ is AS regular,
$E$ is Frobenius. Then the stable module category $\underline{\gr}\; E$
is a triangulated category. Let $M$ be a finite dimensional right
graded $E$-module. Take a minimal graded projective resolution
of $M$ as follows
$$\cdots\longrightarrow P^{-n}\longrightarrow\cdots\longrightarrow
P^0\longrightarrow M\longrightarrow 0.$$
The {\it complexity}  of $M$, denoted by $c(M)$,  is the least
integer $\lambda\ge0$ such that $\dim P^{-n}<a\, n^{\lambda-1}$
for almost all $n$, where $a>0$ is a fixed number. The complexity
of a module over a finite dimension algebra is an important
invariant in studying the representations of finite dimensional
algebras \cite{Ca,CDW,GLW}. Given any nonnegative integer $\beta$,
let $\mathcal{C}_\beta$ be the full subcategory of $\gr E$
consisting of objects $M$ with $c(M)\leq \beta$.

\begin{theorem}
\label{xxthm0.7}
Retain the hypotheses as in Theorem {\rm{\ref{xxthm0.3}}} and
assume further that $R$ is Koszul and that $\Bbbk$ is algebraically
closed. Let $\beta$ be a nonnegative integer.
\begin{enumerate}
\item[(1)]
There is a canonical anti-equivalences of triangulated categories
$$D^b(\qgr_\beta R\# H)\cong \underline{\gr}\; E/\mathcal{C}_\beta.$$
\item[(2)]
For any $\beta\geq \GKdim R-\Pty(R,H)$, there is an canonical
anti-equivalence of triangulated categories
$$D^b(\qgr_{\beta} R^H)\cong \underline{\gr}\; E/\mathcal{C}_{\beta}.$$
\end{enumerate}
\end{theorem}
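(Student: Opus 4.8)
The plan is to deduce part (2) from part (1) together with Theorem~\ref{xxthm0.6}, and to prove part (1) by combining a (generalised) BGG correspondence for the Koszul AS regular algebra $B:=R\# H$ with a dictionary between the GK-dimension of a $B$-module and the complexity of the corresponding $E$-module, and then passing to Verdier quotients. For the reduction of (2) to (1), set $n=\GKdim R$; given $\beta\ge n-\Pty(R,H)$, put $\alpha:=n-\beta\le\Pty(R,H)$, so that Theorem~\ref{xxthm0.6} gives $\qgr_{\beta}R^H=\qgr_{n-\alpha}R^H\simeq\qgr_{n-\alpha}R\# H=\qgr_{\beta}R\# H$, whence $D^b(\qgr_{\beta}R^H)\simeq D^b(\qgr_{\beta}R\# H)$, and (1) applied to $R\# H$ concludes; one should check that the equivalence of Theorem~\ref{xxthm0.6} is the one induced by the natural bimodule context between $R^H$ and $R\# H$, so that the composite anti-equivalence is canonical. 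Thus everything rests on (1), which I treat below with $B:=R\# H$.

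Since $R$ is Koszul and $H$ is semisimple (and separable, as $\Bbbk$ is algebraically closed), $B$ is a Koszul algebra over the separable semisimple ring $B_0=H$; since $R$ is AS regular of global dimension $n$ and Cohen--Macaulay, so is $B$, and $B$ is noetherian. Its Koszul dual is precisely $E=\bigoplus_{i=0}^{n}\Ext^i_{B}(S,S)$, a finite-dimensional Koszul algebra that is Frobenius because $B$ is AS regular; hence $\underline{\gr}\; E$ is triangulated and $\mathcal{C}_\beta\subseteq\underline{\gr}\; E$ is as defined above. The first ingredient for (1) is the Koszul/BGG anti-equivalence $D^b(\qgr B)\simeq\underline{\gr}\; E$, the generalisation to Koszul AS regular algebras with semisimple degree-zero part of the Bernstein--Gelfand--Gelfand correspondence, realised by the contravariant functor $\RHom_B(-,S)$ together with a Tate (complete resolution) construction over the Frobenius algebra $E$. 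This is exactly the case $\beta=0$ of (1): $\mathcal{C}_0$ consists of the $E$-modules of finite projective dimension, i.e.\ (as $E$ is self-injective) the projectives, so $\underline{\gr}\; E/\mathcal{C}_0=\underline{\gr}\; E$, and $\qgr_0 B=\qgr B$.

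For general $\beta$, note that inside the noetherian abelian category $\qgr B$ the objects of GK-dimension $\le\beta$ form a Serre subcategory $\mathcal{T}_\beta$ with $\qgr_\beta B=\qgr B/\mathcal{T}_\beta$. The standard localisation theorem for bounded derived categories (which applies since $\qgr B$ is noetherian) then yields $D^b(\qgr_\beta B)\simeq D^b(\qgr B)/D^{b}_{\mathcal{T}_\beta}(\qgr B)$, where $D^{b}_{\mathcal{T}_\beta}(\qgr B)$ is the thick subcategory of complexes with cohomology in $\mathcal{T}_\beta$. On the other side, complexity is invariant under suspension, additive on finite direct sums, closed under direct summands, and subadditive along triangles (a horseshoe argument on minimal graded projective resolutions over $E$), so $\mathcal{C}_\beta$ is already a thick subcategory of $\underline{\gr}\; E$. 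It therefore suffices to show that the BGG anti-equivalence carries $D^{b}_{\mathcal{T}_\beta}(\qgr B)$ onto $\mathcal{C}_\beta$; taking Verdier quotients then produces the desired anti-equivalence $D^b(\qgr_\beta B)\simeq\underline{\gr}\; E/\mathcal{C}_\beta$.

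The heart of the argument --- and the step I expect to be hardest --- is the dictionary: for a finitely generated graded $B$-module $M$, its image in $\qgr B$ has $\GKdim\le\beta$ if and only if the object of $\underline{\gr}\; E$ associated to $M$ lies in $\mathcal{C}_\beta$. This reduces to comparing the Hilbert function $m\mapsto\dim_{\Bbbk}M_m$ with the Betti numbers $\dim_{\Bbbk}P^{-m}$ of the minimal graded projective resolution over $E$ of the Koszul dual of $M$: Koszulity forces $\dim_{\Bbbk}P^{-m}$, for $m\gg0$, to be controlled by $(\dim_{\Bbbk}E_0)\cdot\dim_{\Bbbk}M_m$ up to a bounded error contributed by the finitely many low-degree terms, so that the complexity of the $E$-module equals one plus the order of polynomial growth of $\dim_{\Bbbk}M_m$, which is $\GKdim M$. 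One must be careful that this comparison is compatible with the passage to $\qgr$ (finite-dimensional modules, of GK-dimension $0$, become the zero object on both sides) and that the estimates are uniform enough to match the two thick subcategories and not merely individual objects: since $D^{b}_{\mathcal{T}_\beta}(\qgr B)$ is generated as a thick subcategory by the modules $M$ with $\GKdim M\le\beta$ and, dually, every complexity-$\le\beta$ object of $\underline{\gr}\; E$ is the BGG image of a complex with such cohomology, the dictionary yields the required identification, completing the proof of (1) and hence of the theorem.
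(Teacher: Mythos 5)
Your overall strategy coincides with the paper's: part (2) is deduced from part (1) together with Theorem \ref{xxthm0.6}, exactly as in the paper, and part (1) is attacked via Koszul/BGG duality for $B=R\#H$ (more precisely for its basic algebra, which is how the paper reconciles $\Ext_B(B_0,B_0)$ with the $E=\bigoplus_i\Ext^i_{R\#H}(S,S)$ of the statement), a dictionary matching GK-dimension on the $B$-side with complexity on the $E$-side, and Verdier quotients; your two-step quotient through $D^b(\qgr B)$ is equivalent to the paper's single quotient $D^b(\gr B)/D^b_{\GKgr_\beta}(\gr B)\cong D^b(\qgr_\beta B)$ via Miyachi's localization theorem \cite{Mi}. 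The problem is that the proposal leaves genuine gaps at precisely the two places where the paper has to work.

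First, the equality between $\GKdim M$ and the complexity of the Koszul-dual object, with your Hilbert-function/Betti-number comparison, is only valid when $M$ is a Koszul module: there the minimal resolution of $K(M)$ has $P^{-n}\cong M_n^*\otimes_{E_0}E$, and even then one direction needs the estimate $\dim M_n\leq a\,n^{\lambda-1}$ for $n\gg0$, which is not formal (GK-dimension a priori controls only the partial sums $\sum_{j\leq n}\dim M_j$) and is obtained in the paper from \cite{ATV} and \cite{StZ} using that $M$ is a module over the noetherian AS regular $R$. For a non-Koszul $M$ the object $K(M)$ is a genuine complex, and its Betti numbers are not ``$\dim M_m$ up to a bounded error''; the paper instead invokes J{\o}rgensen's theorem \cite{Jo1} that $M_{\geq k}(k)$ is Koszul for $k\gg0$ and then uses the truncation triangle and thickness. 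Second, and more seriously, the reverse containment --- that every object of $D^b(\gr E)$ of complexity $\leq\beta$ is $K$ of a complex whose cohomology has GK-dimension $\leq\beta$ --- does not follow ``dually'' as you assert: the Koszul dual of an $E$-module of small complexity is a complex of $B$-modules rather than a module, and since $E$ is finite dimensional the high-degree truncation trick has no analogue on that side, so your module-level comparison cannot be propagated. The paper proves this direction by a separate argument: for a minimal projective resolution $P^\cdot$ of $Y^\cdot$ one bounds $\sum_j\dim\Hom_{D^b(\gr E)}(Y^\cdot,E_0[n](j))$ by a constant multiple of $\dim P^{-n}$ and then computes $\dim H^iK^{-1}(Y^\cdot)_j$ as exactly such a Hom-space. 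Without an argument of this kind, the identification of the two thick subcategories, and hence all of part (1) for $\beta\geq 1$, is only asserted, not proved.
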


When $H$ is a group algebra, one can define another invariant associated
to the $G$-action on $R$. Let now $R$ be as in Theorem \ref{xxthm0.7}.
Then the global dimension and GK-dimension of $R$ are the same. For any
graded algebra automorphism $g$ of
$R$, one can define the trace function 
$Tr(g,t)=\colon \sum\limits_{i=0}^\infty Tr(g|_{R_i})t^i$ as in \cite[Section 1]{KKZ2}.

\begin{definition}
\label{xxdef0.8}
Let $g$ be a graded algebra automorphism of $R$ of finite order
and let $G$ be a finite subgroup of $\Aut_{gr}(R)$.
\begin{enumerate}
\item[(1)]
The {\it reflection number} of $g$ is defined to be
$$\Rpf(g)=\GKdim R-{\text{the order of the pole of $Tr(g,t)$ at $t=1$.}}$$
\item[(2)]
\cite[Definition 2.2]{KKZ3}
$g$ is called a {\it quasi-reflection} if $\Rpf(g)=1$.
\item[(3)]
\cite[Definition 3.7(b)]{KKZ4}
$g$ is called a {\it quasi-bireflection} if $\Rpf(g)=2$.
\item[(4)]
The {\it reflection number} of $G$-action on $R$ is defined to be
$$\Rpf(R,G)=\min \{ \Rpf(g)\mid 1\neq g\in G\}.$$
\item[(5)]
$G$ is called {\it conventionally small} if $\Rpf(R,G)\geq 2$, or equivalently, $G$
does not contain any quasi-reflection.
\end{enumerate}
\end{definition}

Similar to $\Pty(R,G)$, $\Rpf(R,G)$ is an invariant of
the $G$-action on $R$, not of the pair $(R,G)$.
The relation between $\Pty(R,G)$ and $\Rpf(R,G)$ is not
straightforward, and what we are hoping for is the
following conjectural inequality.

\begin{conjecture}
\label{xxcon0.9}
Retain the hypotheses as in Theorem {\rm{\ref{xxthm0.7}}}.
Let $G$ be a finite subgroup of $\Aut_{gr}(R)$. Then
\begin{equation}
\label{E0.8.1}\tag{E0.8.1}
\Pty(R,G)\geq \Rpf(R,G).
\end{equation}
\end{conjecture}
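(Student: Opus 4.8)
We expect that Conjecture \ref{xxcon0.9} can be attacked as follows. Write $n=\GKdim R$, $B=R\ast G$, $e=1\ast\inth$ where $\inth=\tfrac1{|G|}\sum_{g\in G}g$, and $I=BeB$; let $p_g$ denote the order of the pole of $Tr(g,t)$ at $t=1$. Since $\Rpf(g)=n-p_g$ and $\Rpf(R,G)=\min_{1\neq g\in G}\Rpf(g)=n-\max_{1\neq g\in G}p_g$, while $\Pty(R,G)=n-\GKdim(B/I)$, the inequality \eqref{E0.8.1} is equivalent to the estimate $\GKdim(B/I)\leq\max_{1\neq g\in G}p_g$. By the Cohen--Macaulay property of $R$ one has $j(M)+\GKdim M=n$ for every finitely generated graded $R$-module $M$, and $B/I$ is such a module (before passing to the quotient, $B$ is free of rank $|G|$ over $R$); hence also $\Pty(R,G)=j(B/I)$, so the conjecture repackages as ``$B/I$ has grade $\geq\Rpf(R,G)$'', extending the homologically small case $\Rpf(R,G)\geq 2$ of Theorem \ref{xxthm0.3}. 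Because $B/I$ is a finitely generated graded module over the noetherian AS regular algebra $R$, its Hilbert series is rational and $\GKdim(B/I)$ is the order of its pole at $t=1$; so it suffices to bound that pole order.

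The plan is to compute $H_{B/I}(t)=|G|H_R(t)-H_I(t)$ and combine it with the noncommutative Molien identity $|G|H_{R^G}(t)=\sum_{g\in G}Tr(g,t)$. One first checks $Be=Re$ and $eB=eR$, so $I=\operatorname{span}_{\Bbbk}\{\,rer':r,r'\in R\,\}$ with $rer'=\tfrac1{|G|}\sum_{g\in G}r\,g(r')\,g$; and since $ae=ea$ for $a\in R^G$, multiplication descends to a surjection of graded $(R,R)$-bimodules $\nu\colon R\otimes_{R^G}R\twoheadrightarrow I$ which, composed with $I\hookrightarrow B$, is the Auslander map $r\otimes r'\mapsto\sum_{g}r\,g(r')\,g$. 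In the favourable case where $R$ is free as a left $R^G$-module --- equivalently, $\nu$ is an isomorphism --- one gets $H_I(t)=H_R(t)^2/H_{R^G}(t)$, hence
\begin{align*}
H_{B/I}(t)&=|G|H_R(t)-\frac{H_R(t)^2}{H_{R^G}(t)}=\frac{H_R(t)}{H_{R^G}(t)}\Bigl(|G|H_{R^G}(t)-H_R(t)\Bigr)\\
&=\frac{H_R(t)}{H_{R^G}(t)}\sum_{1\neq g\in G}Tr(g,t).
\end{align*}

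Both $H_R(t)$ and $H_{R^G}(t)$ have a pole of order exactly $n$ at $t=1$ with positive leading Laurent coefficient (the statement for $R^G$ follows since $R$ is module-finite over $R^G$, so $\GKdim R^G=n$), so $H_R(t)/H_{R^G}(t)$ is regular at $t=1$; hence the pole order of $H_{B/I}(t)$ at $t=1$ is at most that of $\sum_{1\neq g\in G}Tr(g,t)$, that is, $\leq\max_{1\neq g\in G}p_g$, which proves \eqref{E0.8.1} in this case --- with equality whenever the leading Laurent coefficients in the sum do not cancel. For $R=\Bbbk[x_1,\cdots,x_n]$ commutative this recovers the classical equality $\Pty(R,G)=\Rpf(R,G)$, as there $p_g$ is the order of the pole of $\det(1-gt\mid R_1)^{-1}$ at $t=1$, i.e.\ $\dim R_1^g=\dim\operatorname{Fix}(g)$.

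The main obstacle is the general case: $R$ need not be free over $R^G$ and $\nu$ need not be injective --- for instance, for $R=\Bbbk[x,y]$ and $G=\{\pm\mathrm{id}\}$ one computes $\dim_{\Bbbk}(\ker\nu)_2=1$, although the conclusion (with equality) still holds there. In general $H_{B/I}(t)=\bigl(|G|H_R(t)-H_{R\otimes_{R^G}R}(t)\bigr)+H_{\ker\nu}(t)$, and one must show that the combined pole order at $t=1$ of the two terms on the right --- controlled respectively by the higher Tor modules $\Tor_i^{R^G}(R,R)$, $i\geq 1$, and by $\ker\nu$ --- does not exceed $\max_{g\neq 1}p_g$. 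This is where the Cohen--Macaulay hypothesis must be used in an essential way: $R^G$ is a graded direct summand of $R$ via the Reynolds operator, hence Cohen--Macaulay of GK-dimension $n$, and the interplay between its resolutions of $R$ and the trace functions $Tr(g,t)$ should be governed by the trace--determinant identities of \cite{KKZ2,KKZ3} relating $Tr(g,t)$ and $\hdet$ to the ramification of $R/R^G$ along reflection loci; concretely, one wants $p_g$ to bound the GK-dimension of the ``fixed quotient'' $R/\langle g(r)-r\mid r\in R\rangle$, and $B/I$ to be filtered by such quotients. The same filtration can be approached without $R^G$: the relation $\overline e=0$ forces $\sum_{g\in G}\overline{g(r)}\,\overline g=0$ in $B/I$ for all $r\in R$, so $B/I=\sum_{1\neq g\in G}\overline R\,\overline g$ is generated over $\overline R$ by the classes $\overline g$, $g\neq 1$; filtering by the resulting cyclic subquotients $\overline R/J_g$ and bounding $\GKdim(\overline R/J_g)\leq p_g$ would suffice --- but identifying the left annihilators $J_g$ runs into the same trace/ramification input, which is the crux of the conjecture.
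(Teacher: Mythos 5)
You are proposing a proof of Conjecture \ref{xxcon0.9}, but be aware that the paper contains no proof of this statement to begin with: it is stated as an open conjecture, verified only in special cases, namely Theorem \ref{xxthm5.7}(4) (the $(-1)$-skew polynomial ring $\Bbbk_{-1}[x_1,\cdots,x_n]$ with the cyclic group generated by the $n$-cycle, $n=p^d$) and, as remarked in the introduction, global dimension two via the classification in \cite{CKWZ1}. Those verifications proceed by explicit ideal-membership computations — showing concrete elements of $R$ lie in $BeB$ (Lemma \ref{xxlem5.1}(3,4), Lemma \ref{xxlem5.3}) and thereby bounding $\GKdim (B/BeB)$ directly — not by any Hilbert-series identity. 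So the only question is whether your argument closes the conjecture, and it does not.

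Your reformulation is correct: writing $B=R\ast G$, $I=BeB$, $p_g$ for the pole order of $Tr(g,t)$ at $t=1$, the inequality \eqref{E0.8.1} is equivalent to $\GKdim(B/I)\leq\max_{1\neq g\in G}p_g$, and in the degenerate situation where $\nu\colon R\otimes_{R^G}R\to I$ is bijective and $R$ is graded free over $R^G$ your Molien-series manipulation does give the bound. But there are two problems. First, the parenthetical ``equivalently'' is wrong: graded freeness of $R$ over $R^G$ neither implies nor is implied by injectivity of $\nu$, and both hypotheses fail in essentially every case the conjecture is about — your own example $R=\Bbbk[x,y]$, $G=\{\pm\operatorname{id}\}$ has $\ker\nu\neq0$, and (classically) freeness of $R$ over $R^G$ is the Shephard--Todd--Chevalley situation where $G$ is generated by (quasi-)reflections, the opposite extreme of the conventionally small case targeted here. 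Second, and decisively, in the general case you yourself identify the missing step and supply no argument for it: controlling the pole contributions of $\ker\nu$ and of $\Tor_i^{R^G}(R,R)$ for $i\geq1$, or equivalently proving, for the cyclic subquotients $\overline{R}\,\overline{g}\cong\overline{R}/J_g$ of $B/I$, that $\GKdim(\overline{R}/J_g)\leq p_g$. That bound is precisely the content of the conjecture: in the commutative case it rests on the identity $p_g=\dim\operatorname{Fix}(g)$ together with the fact that the corresponding coinvariant quotient is a quotient of the coordinate ring of $\operatorname{Fix}(g)$, and no analogue of either ingredient is established here (or in the paper) for general noetherian Koszul AS regular Cohen--Macaulay $R$. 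What you have is a correct repackaging of the statement plus a computation under hypotheses that exclude the cases of interest, so it cannot be accepted as a proof; if you want a tractable target, the route actually used in Section 5 and in \cite{BHZ} — producing enough elements of $\bigcap_{g}RR_g$, respectively of the ideals $J_g$, by hand — is the one that has so far yielded results.
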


This conjecture can be verified for some special cases
[Theorem \ref{xxthm5.7}(4)]. If it holds, then Conjecture
\ref{xxcon0.4} follows when $H$ is a group algebra
because $\Rpf(R,G)\geq 2$ when the $G$-action has trivial
homological determinant. Indeed, if $g\in G$ is a 
quasi-reflection, then $Tr(g,t)=\frac{1}{(1-t)^{n-1}(1-\xi t)}$ 
where $n=\GKdim R$ and $\xi\neq 1$. Since the homological 
determinant of $G$-action is trivial, we obtain that $\xi=1$, 
a contradiction. Therefore $G$ contains no quasi-reflections, 
that is, $\Rpf(R,G)\ge2$. If Conjecture \ref{xxcon0.9}
holds, then, by combining with Theorem \ref{xxthm0.3},
we obtain the following: \emph{Retain the hypotheses
as in Theorem {\rm{\ref{xxthm0.7}}} and further assume that
the $G$ is conventionally small. Then
$R\ast G\cong \End_{R^G}(R)$ naturally.}

Combining \eqref{E0.6.1} with \eqref{E0.8.1}
we have another conjecture
\begin{equation}
\label{E0.8.2}\tag{E0.8.2}
\dim_{sing}(R^G)\leq \gldim R-\Rpf(R,G).
\end{equation}

The advantage of this conjectural inequality is that, in contrast to $\Pty(R,G)$,
the invariant $\Rpf(R,G)$ is relatively easy to calculate. Note that
\eqref{E0.8.1} can be verified when $R$ is AS regular of global
dimension two following the classification of all such $G$-actions
in \cite{CKWZ1}.
%When $G$ acts on an AS regular algebra $R$ of
%global dimension two with trivial homological determinant,
%$\Pty(R,G)=2$ \cite{CKWZ2}; furthermore, $\Rpf(R,G)=2$ and
%$\dim_{sing}(R^G)=0$.
When the $G$-action on $R$ has trivial homological determinant,
then $\Pty(R,G)$ (respectively, $\Rpf(R,G)$) would be either
2 or $3$ and $\dim_{sing}(R^G)$ would be either 0 or 1. Understanding
all possibilities of $\{\Pty(R,G), \Rpf(R,G), \dim_{sing}(R^G)\}$
in dimension three should be an interesting project.

This paper is organized as follows. We provide background material
in Section 1. In Section 2, we prove a Morita type theorem for
the equivalences of the quotient categories. In Section 3, we
consider semisimple Hopf actions on graded Cohen-Macaulay algebras.
We prove a generalized version of Theorem \ref{xxthm0.3}, and then
give a proof of Theorem \ref{xxthm0.6}. In Section 4, we
establish a version of BGG correspondence between the derived
categories of quotient categories and the stable categories of
some Frobenius categories. In particular, we obtain Theorem
\ref{xxthm0.7}. In Section 5, we compute the pertinency of
finite group actions on some special algebras, and
give a proof of Theorem \ref{xxthm0.5}.

\section{Preliminaries}
\label{xxsec1}

Throughout let $\Bbbk$ be a base ring that is a noetherian commutative
domain.  All algebraic objects are over $\Bbbk$. An algebra means an
associative $\Bbbk$-algebra that is projective as a $\Bbbk$-module.
In Sections 4 and 5 (and part of Section 3) we further assume that
$\Bbbk$ is a field. Let $B$ be a (left and right) noetherian algebra.
Usually we are working with right $B$-modules. We write $\Mod B$ for
the category of all right $B$-modules, and $\mmod B$ for the full
subcategory of all finitely generated (or f.g. for short)
right $B$-modules.

Several results in this paper concern quotient categories of
the module categories defined  via a dimension function.
For the purpose of this paper, a function
$$\partial: \Mod B \longrightarrow {\mathbb R}\cup \{\pm \infty\}$$
is called a {\it dimension function} if, for all f.g. $B$-modules $M$,
\begin{equation}
\label{E1.0.1}\tag{E1.0.1}
\partial(M)\geq \max\{\partial(N),\partial(M/N)\},
\end{equation}
whenever $N$ is a submodule of $M$. The
$\partial$ is called an {\it exact dimension function}
if for all f.g. $B$-modules $M$,
\begin{equation}
\label{E1.0.2}\tag{E1.0.2}
\partial(M)=\max\{\partial(N),\partial(M/N)\}
\end{equation}
whenever $N$ is a submodule of $M$. If $M$ is not f.g.,
we extend the definition of $\partial$ by
\begin{equation}
\label{E1.0.3}\tag{E1.0.3}
\partial(M)=\sup\{\partial(N)\mid {\text{ for all f.g.
$B$-submodules $N\subseteq M$}}\}.
\end{equation}
By using \eqref{E1.0.3}, equation \eqref{E1.0.2} holds for all
$B$-modules $M$ as well. Our definition of a dimension
function is weaker than the definition given in
\cite[6.8.4]{MR}, however, \eqref{E1.0.2} is the only property
we will use.

For a $\Bbbk$-module $V$, $\dim V$ denotes $\dim_{Q} (V\otimes_{\Bbbk} Q)$
where $Q$ is the fraction field of $\Bbbk$.
One special dimension function is the following

\begin{definition}
\label{xxdef1.1} \cite[Definition 2.1]{KL}
Let $B$ be an algebra and $M$ a right $B$-module.
\begin{enumerate}
\item[(1)]
The {\it Gelfand-Kirillov dimension} or
{\it GK-dimension} of $B$
is defined to be
$$\GKdim  B=\sup_V \{\overline{\underset{n\to\infty}\lim}
\log_n (\dim V^n) \mid
 {\text{f.g. $\Bbbk$-submodules}}\; V \subseteq B\}. $$
\item[(2)]
The {\it Gelfand-Kirillov dimension} or {\it GK-dimension}
of $M$ is defined to be
$$\GKdim M=\sup_{V,W} \{\overline{\underset{n\to\infty}\lim}
\log_n (\dim W V^n) \mid
{\text{f.g. $\Bbbk$-submodules}}\;V \subseteq B, W\subseteq M\}. $$
\end{enumerate}
\end{definition}

See \cite{KL} and Remark \ref{xxrem1.5} for basic properties
of the GK-dimension.

In a major part of the paper the dimension function is chosen to
be the Gelfand-Kirillov dimension. However,
a general dimension function will be useful when we deal with the
local setting in the sequel \cite{BHZ} in which
case the GK-dimension is usually
infinite and does not work well.

From now on we fix an exact dimension function $\partial$.
Given an integer $n\ge0$, let $\GKMod_n B$ denote the full subcategory
of $\Mod B$ consisting of right $B$-modules with $\partial$-dimension
less than or equal to $n$, and let
$$\GKmod_n B=\GKMod_n B\; \bigcap\; \mmod B.$$
Since $\partial$ is {\it exact} on right $B$-modules,
$\GKMod_n B$ (respectively, $\GKmod_n B$) is a Serre subcategory of $\Mod B$
(respectively, $\mmod B$). Hence it makes sense to define the quotient categories:
$$\QMod_n B:=\frac{\Mod B}{\GKMod_n B},\quad
{\text{and}}\quad \qmod_n B:=\frac{\mmod B}{\GKmod_n B}.$$
We denote the natural projection functor by
$$\pi:\Mod B\longrightarrow \QMod_n B.$$
For $M\in \Mod B$, we will write $\mathcal{M}$ for the object $\pi(M)$
in $\QMod_n B$. The hom-set of the objects in the quotient category is
defined by
$$\Hom_{\QMod_n B}(\mathcal{M},\mathcal{N})=
\underset{\longrightarrow}\lim\Hom_B(M',N')$$
for $M, N\in \Mod B$, where $M'$ is a submodule of $M$ such that
$\partial (M/M')\leq n$, $N'=N/T$ for some submodule $T$ with
$\partial (T)\leq n$, and the direct limit runs over all the
pairs $(M', N')$ with these properties.

\begin{definition}
\label{xxdef1.2}
Let $A$ and $B$ be noetherian algebras and
$\partial$ be an exact dimension function that is defined on right
$A$-modules and on right $B$-modules. Let $n$ and $i$ be nonnegative
integers. Let $_AM_B$ denote one (or any) bimodule which is f.g. both as a
left $A$-module and as a right $B$-module.
\begin{enumerate}
\item[(1)]
We say $\partial$ {\it satisfies $\gamma_{n,i}(M)$} if for any
$N\in \mmod_{n} A$, $\Tor^A_j(N, M)\in \mmod_n B$
for all $0\leq j\leq i$.
\item[(2)]
We say $\partial$ {\it satisfies $\gamma_{n,i}$} if it satisfies
$\gamma_{n,i}(M)$ for all $M$ given as above.
\end{enumerate}
\end{definition}

In the next section we will be particularly interested in
the $\gamma_{n,1}$ property.

\begin{lemma}
\label{xxlem1.3}
Let $A$ and $B$ be noetherian algebras such that $\partial$ is an exact
dimension function on both right $A$-modules and right $B$-modules.
Assume that a bimodule ${}_AM_B$ is f.g. as a left $A$ and as a right
$B$-module and that $\partial$ satisfies $\gamma_{n,1}(M)$.
\begin{enumerate}
\item[(1)]
The functor $-\otimes_A M$ induces a functor
$$-\otimes_\mathcal{A} \mathcal{M}: \qmod_n A\longrightarrow\qmod_n B.$$
\item[(2)]
The functor $-\otimes_A M$ induces a functor
$$-\otimes_\mathcal{A} \mathcal{M}: \QMod_n A\longrightarrow\QMod_n B.$$
\end{enumerate}
\end{lemma}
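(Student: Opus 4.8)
\textbf{Proof proposal for Lemma \ref{xxlem1.3}.}

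The plan is to show that the functor $-\otimes_A M$ sends the Serre subcategory $\GKmod_n A$ into $\GKmod_n B$ (respectively $\GKMod_n A$ into $\GKMod_n B$), so that by the universal property of Serre quotients it descends to a functor on the quotient categories. The hypothesis $\gamma_{n,1}(M)$ is precisely tailored to this: for $N\in\mmod_n A$ it guarantees $\Tor^A_0(N,M)=N\otimes_A M\in\mmod_n B$ (and also $\Tor^A_1(N,M)\in\mmod_n B$, which we will need to control the non-right-exactness of $-\otimes_A M$ on short exact sequences, i.e. to ensure that a morphism inverted in $\qmod_n A$ goes to a morphism inverted in $\qmod_n B$). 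For part (1), I would argue as follows. First, $-\otimes_A M\colon \mmod A\to\mmod B$ is well-defined since $M$ is f.g.\ as a right $B$-module and $A$ is noetherian. Composing with $\pi_B\colon\mmod B\to\qmod_n B$ gives a functor $\mmod A\to\qmod_n B$. The key claim is that this composite kills $\GKmod_n A$: if $N\in\mmod_n A$ then $N\otimes_A M=\Tor^A_0(N,M)\in\mmod_n B$ by $\gamma_{n,1}(M)$, hence $\pi_B(N\otimes_A M)=0$. By the universal property of the quotient category $\qmod_n A=\mmod A/\GKmod_n A$ (see \cite[6.8.4]{MR} or the construction recalled above), any functor out of $\mmod A$ that annihilates all objects of $\GKmod_n A$ factors uniquely through $\pi_A$, yielding the desired $-\otimes_{\mathcal A}\mathcal M\colon\qmod_n A\to\qmod_n B$.

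The one subtlety is that the universal property as usually stated requires that the functor send \emph{every morphism with kernel and cokernel in $\GKmod_n A$} to an isomorphism, not merely that it kill the objects of $\GKmod_n A$; these are equivalent for exact functors but $-\otimes_A M$ is only right exact. So I would verify this directly. Let $f\colon N'\to N$ be a morphism in $\mmod A$ with $\ker f,\operatorname{coker} f\in\mmod_n A$. Breaking $f$ into the short exact sequences $0\to\ker f\to N'\to \operatorname{im} f\to 0$ and $0\to\operatorname{im} f\to N\to\operatorname{coker} f\to 0$, it suffices to handle each. Applying $-\otimes_A M$ to $0\to\ker f\to N'\to\operatorname{im} f\to 0$ gives an exact sequence $\ker f\otimes_A M\to N'\otimes_A M\to\operatorname{im} f\otimes_A M\to 0$; since $\ker f\otimes_A M\in\mmod_n B$ by $\gamma_{n,1}(M)$, the map $N'\otimes_A M\to\operatorname{im} f\otimes_A M$ has cokernel $0$ and kernel a quotient of $\ker f\otimes_A M$, hence in $\mmod_n B$ by exactness of $\partial$; thus it becomes an isomorphism in $\qmod_n B$. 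Applying $-\otimes_A M$ to $0\to\operatorname{im} f\to N\to\operatorname{coker} f\to 0$ gives $\Tor^A_1(\operatorname{coker} f,M)\to\operatorname{im} f\otimes_A M\to N\otimes_A M\to\operatorname{coker} f\otimes_A M\to 0$; here $\Tor^A_1(\operatorname{coker} f,M)$ and $\operatorname{coker} f\otimes_A M$ both lie in $\mmod_n B$ (this is where we use $\gamma_{n,1}$ in its full strength, i.e.\ the $j=1$ case), so again the middle map becomes an isomorphism in $\qmod_n B$. Composing, $f\otimes_A M$ is an isomorphism in $\qmod_n B$, which is exactly what the universal property needs.

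For part (2), the argument is the same but starting from $\Mod A$. The only point to check is that $-\otimes_A M$ still sends $\GKMod_n A$ into $\GKMod_n B$ for arbitrary (not necessarily f.g.) modules; this follows from the f.g.\ case together with the extension \eqref{E1.0.3} of $\partial$ to all modules, since tensor product commutes with direct limits and every module is the direct limit of its f.g.\ submodules, while GK-dimension (or any exact $\partial$ extended via \eqref{E1.0.3}) of a direct limit is the supremum of the dimensions of the f.g.\ pieces. Likewise the verification that morphisms with kernel and cokernel in $\GKMod_n A$ are inverted goes through verbatim, using that $\Tor^A_1$ and $\otimes_A M$ commute with filtered colimits. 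I expect the main obstacle to be precisely this bookkeeping around the failure of right-exactness — making sure the $j=1$ clause of $\gamma_{n,1}(M)$ is invoked correctly so that inverted morphisms stay inverted — rather than anything conceptually deep; everything else is a formal consequence of the universal property of Serre quotient categories.
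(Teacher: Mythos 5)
Your proof is correct and follows essentially the same route as the paper's: both factor $f$ through its image, use right-exactness of $-\otimes_A M$ together with the $\Tor^A_1$ clause of $\gamma_{n,1}(M)$ and exactness of $\partial$ to bound the kernel and cokernel of $f\otimes_A M$, and then descend to the quotient categories via the localization property. The only immaterial difference is the order: the paper proves (2) first on $\Mod A$ using direct limits and obtains (1) by restriction, whereas you treat the finitely generated case first and extend to (2) by filtered colimits.
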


\begin{proof}
We first prove (2).

For $K\in\GKMod_n A$, it is a direct limit of f.g.
submodules. Since the direct limit is an exact functor and, by assumption,
$\partial$ is exact and satisfies $\gamma_{n,1}(M)$,
we obtain that $\partial(K\otimes_A M)\leq n$ and
that $\partial(\Tor_1^A(K,M))\leq n$ for all $K\in\GKMod_n A$.
Let $f:U\to V$ be a
morphism in $\Mod A$ such that $\ker f$ and co$\ker f$ are in $\GKMod_n B$.
By the right exactness of $-\otimes_A M$, the cokernel of the right $B$-module
morphism
$$f\otimes_AM:U\otimes_AM\to V\otimes_AM$$
is in $\GKMod_n B$. Write $f$ as the composition
$$f=h\circ g : U\overset{g}\twoheadrightarrow W\overset{h}\hookrightarrow V.$$
Since $\partial(\Tor_1^A(\text{coker}(f),M))\leq n$, by the long exact sequence
associated to $-\otimes_A M$,
$\partial(\ker (h\otimes_A M))\leq n$. Let $X=\ker f=\ker g$. Then
$\partial(X)\leq n$. We have an exact sequence
$$X\otimes_A M\longrightarrow U\otimes_A M\overset{g\otimes_A M}
\longrightarrow W\otimes_A M\longrightarrow 0$$
which induces the exact sequence
$$X\otimes_AM\longrightarrow \ker(f\otimes_AM)\longrightarrow \ker(h\otimes_AM)
\longrightarrow 0.$$
Since $\partial$ is exact, we see
$$\partial(\ker(f\otimes_AM))
\leq \max\{\partial(X\otimes_A M), \partial(\ker (h\otimes_A M))\}
\leq n.$$
Hence $f\otimes_AM$ has its kernel and cokernel in $\GKMod_n B$.
Therefore, $-\otimes_AM$ induces a functor
$$-\otimes_\mathcal{A}\mathcal{M}:\QMod_n A\longrightarrow \QMod_n B.$$
So we have proven part (2).

Since $M$ is f.g. in both sides, this functor restricts to
$$-\otimes_\mathcal{A}\mathcal{M}:\qmod_n A\longrightarrow \qmod_n B.$$
Therefore (1) follows.
\end{proof}

We now recall a definition.

\begin{definition}
\label{xxdef1.4}
Let $B$ be an algebra and $M$ a right $B$-module.
\begin{enumerate}
\item[(1)]
\cite[Definition 0.4]{ASZ1}
Let $\partial$ be a dimension function. We say $B$ is
{\it $\partial$-Cohen-Macaulay} (or {\it $\partial$-CM}) if
$\partial (B)=d\in\mathbb{N}$,
and
$$j(M)+\partial (M)=\partial (B),$$
for every f.g. right $B$-module $M\neq 0$.
\item[(2)]
If $B$ is $\GKdim$-Cohen-Macaulay, we just
say it is {\it Cohen-Macaulay} or {\it CM}.
\item[(3)]
Let $\partial$ be an exact dimension function and
$\partial(B)=d\in {\mathbb N}$. Let $s$ be a nonnegative
integer. We say $B$ is $\partial$-CM$(s)$ if, for every
f.g right $B$-module $M$,
$${\text{$\partial(M)\leq d-s$ $\Longleftrightarrow$ $j(M)\geq s$.}}$$
We just say $B$ is CM($s$) if it is GKdim-CM($s$).
\end{enumerate}
\end{definition}

The CM property together with the Artin-Schelter regularity
and the Auslander property (another homological property)
has been studied in the noncommutative setting in
\cite{ASZ1, ASZ2, SZ, YZ, Zh}.

\begin{remark}
\label{xxrem1.5}
In parts (4,5), assume that $\Bbbk$ is a field.
\begin{enumerate}
\item[(1)]
The dimension function $\partial$ is exact on $\partial$-CM algebras
\cite[p. 3]{ASZ2}.
\item[(2)]
$\GKdim$ is exact on filtered algebras such that the associated
graded algebras are locally finite and noetherian
\cite[Theorem 6.14]{KL}.
\item[(3)]
$\GKdim$ satisfies $\gamma_{n,0}$ for all $n$. Namely,
for any $N\in \GKmod_n A$, we have $\GKdim N\otimes_A M\leq n$,
assuming ${}_AM_B$ is f.g. in both sides.
This follows from \cite[Proposition 5.6]{KL} by a slight modification
of the proof.
\item[(4)] For every f.g. module $N$,
it is well-known that $\GKdim N=0$ if and only if $N$ is finite
dimensional over $\Bbbk$. Using this fact, one can easily check that
$\GKdim \Tor_i^A(N,M)=0$ if $\GKdim N=0$ and ${}_AM_B$ is f.g.
in both sides. This means that $\GKdim$ satisfies
$\gamma_{0,i}$ for all $i$.
\item[(5)]
Take $\partial=\GKdim$.
As a consequence of part (4) and Lemma \ref{xxlem1.3}(2),
the functor
$$-\otimes_\mathcal{A}\mathcal{M}:\QMod_0A\longrightarrow \QMod_0B$$
is always well-defined.
\end{enumerate}
\end{remark}

Let $B=\bigoplus_{n\ge 0} B_n$ be a noetherian graded
algebra. In this paper, a graded algebra means an ${\mathbb N}$-graded
and a graded module means ${\mathbb Z}$-graded. We call
$B$ {\it locally finite} if each $B_n$ has finite rank over $\Bbbk$.
We write $\Gr B$ for the category of
right graded $B$-modules, and $\gr B$ for the subcategory of
f.g. right graded $B$-modules. Let $\partial$ be an
exact dimension function on graded right $B$-modules. Similar to
the nongraded case, we write $\Gr_n B$ (respectively, $\gr_n B$) for the
subcategory of $\Gr B$ (respectively, $\gr B$) consisting of objects
with $\partial$-dimension not larger than $n$. Similar to
the definitions given in the introduction, we have quotient
categories
$$\QGr_n B:=\frac{\Gr B}{\Gr_n B}\qquad \text{ and }\qquad
\qgr_n B:=\frac{\gr B}{\gr_n B}.$$
We also use $\pi:\Gr B\to \QGr_n B$ for the natural projection
functor, and for $M\in \Gr B$, we write $\mathcal{M}:=\pi(M)$.

If $M$ is a f.g. graded right module over a
noetherian  locally finite graded algebra $B$,
then its GK-dimension can be computed by \cite[(E7)]{Zh}
\begin{equation}
\label{E1.5.1}\tag{E1.5.1}
\GKdim M
=\underset{k\to \infty}{\overline{\lim}}\log_k\sum_{j\leq k}\dim (M_j).
\end{equation}

Let $M$ and $N$ be right $A$ modules. As usual, we write 
$$\underline{\Hom}_ A(M, N)=\oplus_{j\in \mathbb{Z}}\Hom_{\Gr A}(M, N(j)),$$ 
where $\Hom_{\Gr A}(M, N)$ is the space of all the 
graded $A$-module homomorphisms preserving the gradings. 
If $N$ is a graded $A$-$B$-bimodule, the tensor product 
$M\otimes_A N$ is a right graded $B$-module by setting 
the $j$th component $(M\otimes_A N)_j$ to be the subspace 
of $M\otimes_AN$ spanned by elements $x\otimes_A y$, 
where $x\in M_k$ and $y\in N_{j-k}$ for all $k\in\mathbb{Z}$.
The $i$th derived functor of the $\underline{\Hom}_A(M,N)$ in the
graded category (respectively, $M\otimes_A N$)  is
denoted by $\uExt^i_A(M,N)$ (respectively, $\uTor^A_i(M,N)$).

\begin{lemma}
\label{xxlem1.6}
Let $A$ and $B$ be noetherian locally finite graded
algebras. Then  $\GKdim$ on graded modules satisfies $\gamma_{n,i}$
for all $n$ and $i$.
\end{lemma}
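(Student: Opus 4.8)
The plan is to exploit the freedom in computing $\uTor$ by resolving whichever variable is convenient: the finite generation and the right $B$-module structure of $\uTor^A_j(N,M)$ are visible when one resolves $N$, whereas the GK-dimension bound becomes transparent when one resolves $M$. Concretely, I would fix a bimodule ${}_AM_B$ that is f.g. on both sides and a module $N\in\gr_n A$, and show that $\uTor^A_j(N,M)\in\gr_n B$ for \emph{every} $j$; this in particular covers $0\le j\le i$ and hence yields $\gamma_{n,i}$ for all $n$ and $i$ at once. The base case $j=0$ is precisely Remark \ref{xxrem1.5}(3) and serves as a consistency check.

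First I would record that $\uTor^A_j(N,M)$ is a f.g. graded right $B$-module. Resolving $N$ by f.g. graded free right $A$-modules $P_\bullet\to N$ (possible since $A$ is right noetherian), each $P_\ell\otimes_A M$ is a finite direct sum of shifts of $M$, hence f.g. over the noetherian algebra $B$; therefore its subquotient $\uTor^A_j(N,M)=H_j(P_\bullet\otimes_A M)$ is f.g. over $B$ as well. All modules involved are locally finite, so formula \eqref{E1.5.1} is applicable to $\uTor^A_j(N,M)$.

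For the dimension bound I would instead take a resolution $Q_\bullet\to M$ of $M$ by f.g. graded free \emph{left} $A$-modules (using that $A$ is left noetherian and $M$ is f.g. on the left) and compute $\uTor^A_j(N,M)=H_j(N\otimes_A Q_\bullet)$. The key observation is that each chain module $N\otimes_A Q_\ell$ is a finite direct sum of shifts of $N$; since the GK-dimension of a f.g. graded module depends only on its Hilbert function via \eqref{E1.5.1}, it is unchanged by degree shifts and finite direct sums, so $\GKdim(N\otimes_A Q_\ell)=\GKdim N\le n$. As $\uTor^A_j(N,M)$ is a graded subquotient of $N\otimes_A Q_\ell$, the rank of each of its graded pieces is bounded by that of $N\otimes_A Q_\ell$, and \eqref{E1.5.1} then forces $\GKdim\uTor^A_j(N,M)\le n$. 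Combining the two paragraphs gives $\uTor^A_j(N,M)\in\gr_n B$, as desired.

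The one point requiring care — the main obstacle — is the compatibility of the two computations: the chain complex $N\otimes_A Q_\bullet$ carries no evident right $B$-action, so a priori it only bounds the GK-dimension of $\uTor^A_j(N,M)$ viewed as a graded $\Bbbk$-module. This is harmless because $\uTor$ is balanced, so the two resolutions yield the same graded object up to canonical isomorphism, and because \eqref{E1.5.1} computes the GK-dimension of a f.g. graded $B$-module purely from its Hilbert function, which is intrinsic to the underlying graded $\Bbbk$-module and independent of the chosen resolution. Hence the bound obtained from resolving $M$ transfers verbatim to the right $B$-module $\uTor^A_j(N,M)$, completing the argument; note that this works over any noetherian commutative domain $\Bbbk$, since \eqref{E1.5.1} uses $\dim_Q(-\otimes_\Bbbk Q)$ with $Q=\Frac(\Bbbk)$ and never requires $\Bbbk$ to be a field.
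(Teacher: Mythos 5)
Your proposal is correct and follows essentially the same route as the paper: the paper likewise resolves ${}_AM$ by f.g.\ graded free left $A$-modules, observes that $\uTor_i^A(N,M)$ is a f.g.\ graded right $B$-module because $N$ is f.g.\ over $A$, and then bounds the GK-dimension via the Hilbert-function formula \eqref{E1.5.1} by viewing $\uTor_i^A(N,M)$ as a graded $\Bbbk$-module subquotient of $N\otimes_A F_i$, a finite direct sum of shifts of $N$. The only difference is cosmetic: you spell out the balancedness of $\uTor$ and the role of the two resolutions, which the paper leaves implicit in the phrase ``forgetting the right $B$-module structure.''
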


\begin{proof} Let $M$ be a graded $A$-$B$-bimodule such that
$M$ is f.g. on both sides. We need to show that,
for every f.g. graded right $A$-module $N$ with $\GKdim N\leq n$, we
have $\GKdim\uTor_i^A(N,M)\leq n$ for all $i\ge 0$.

Since $N$ is f.g., $\uTor_i^A(N,M)$ is
a graded f.g. $B$-module for each $i\ge 0$. Let
$$\cdots\longrightarrow F_n\longrightarrow\cdots\longrightarrow F_1
\longrightarrow F_0\longrightarrow M\longrightarrow 0$$
be a graded free resolution of the left graded $A$-module $M$. Since
$_A M$ is f.g., we can choose each $F_i$ to be
f.g.. By definition, $\uTor_i^A(N,M)$ (forgetting the
right $B$-module structure) is a subquotient of $N\otimes_A F_i$.
Temporarily write $T:=\uTor_i^A(N,M)$ and $\overline{N}:=N\otimes_A F_i$.
Since $T$ is f.g. as a right graded $B$-module and, as a
graded $\Bbbk$-module, $T$ is a subquotient of $\overline{N}$,
by \eqref{E1.5.1}, we have
$$\begin{aligned}
\GKdim T
&=\underset{k\to \infty}{\overline{\lim}}\log_k\sum_{j\leq k}\dim (T_j)\\
&\leq\underset{k\to \infty}{\overline{\lim}}\log_k\sum_{j\leq k}\dim
(\overline{N}_j)\\
&\textcolor[rgb]{1.00,0.00,0.00}{\leq} 
\underset{k\to \infty}{\overline{\lim}}\log_k\sum_{j\leq k}\dim (N_j)\\
&\leq \GKdim N =n.
\end{aligned}
$$
\end{proof}

\begin{lemma}
\label{xxlem1.7}
Let $A$ and $B$ be noetherian locally finite graded
algebras. Consider $\partial:=\GKdim$. For any graded $A$-$B$-bimodule
$M$ that is f.g. on both sides, a graded version of
Lemma {\rm{\ref{xxlem1.3}}} holds. Namely, for any $n\ge 0$, we have 
functors:
$$-\otimes_\mathcal{A} \mathcal{M}: \QGr_n A\longrightarrow\QGr_n B,
\qquad \text{and}\qquad
-\otimes_\mathcal{A} \mathcal{M}: \qgr_n A\longrightarrow\qgr_n B.$$
\end{lemma}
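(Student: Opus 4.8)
The plan is to reduce the statement to the argument already given for Lemma~\ref{xxlem1.3}, carried out verbatim inside the graded module categories, the one new input being that $\partial=\GKdim$ behaves on graded modules exactly as required by the hypotheses of that lemma. Concretely, I would first record two facts about $\GKdim$ on graded modules over noetherian locally finite graded algebras: (i) it is an \emph{exact} dimension function on $\Gr A$ and on $\Gr B$; and (ii) it satisfies $\gamma_{n,1}(M)$ for the given bimodule $M$ (in fact $\gamma_{n,i}$ for all $i$).

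For (i), given a short exact sequence $0\to N\to L\to L/N\to 0$ of f.g.\ graded right $A$-modules, each graded component satisfies $\dim L_j=\dim N_j+\dim(L/N)_j$, so by the Hilbert-series formula \eqref{E1.5.1} one gets $\GKdim L=\max\{\GKdim N,\GKdim(L/N)\}$; alternatively this is Remark~\ref{xxrem1.5}(2) applied to $A$ (resp.\ $B$) filtered by itself. The extension to arbitrary graded modules via \eqref{E1.0.3} is automatic, and hence $\Gr_n A$, $\Gr_n B$ are Serre subcategories, as already noted. For (ii), this is precisely Lemma~\ref{xxlem1.6}.

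With (i) and (ii) in hand, I would run the proof of Lemma~\ref{xxlem1.3}(2) word for word in the graded setting: $\Gr A$ is a Grothendieck category in which filtered direct limits are exact, $-\otimes_A M$ is right exact and fits into a long exact $\uTor^A_\bullet(-,M)$ sequence, and $\partial$ is exact on graded modules; these are exactly the ingredients used there. Thus for $K\in\Gr_n A$ one has $\partial(K\otimes_A M)\le n$ and $\partial(\uTor^A_1(K,M))\le n$; factoring a morphism $f=h\circ g$ into an epi followed by a mono and using the long exact Tor sequence, one gets that $f\otimes_A M$ has kernel and cokernel of $\partial$-dimension $\le n$; hence $-\otimes_A M$ descends to $-\otimes_{\mathcal A}\mathcal M:\QGr_n A\to\QGr_n B$. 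Finally, since $M$ is f.g.\ as a right $B$-module, $N\otimes_A M$ is f.g.\ whenever $N$ is, so this functor restricts to $-\otimes_{\mathcal A}\mathcal M:\qgr_n A\to\qgr_n B$.

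The only point needing a moment's care — and hence the ``main obstacle'', though it is a mild one — is checking that nothing in the proof of Lemma~\ref{xxlem1.3} secretly relied on the ungraded hypothesis: the writing of a general (not necessarily f.g.) $K$ as a filtered colimit of its f.g.\ graded submodules, and the exactness of that colimit, must be taken inside $\Gr A$, and the $\gamma_{n,1}$ input must be the graded statement from Lemma~\ref{xxlem1.6} rather than an ungraded analogue. Once that is observed, the proof is complete.
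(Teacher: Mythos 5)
Your argument is correct and is essentially the paper's own proof: the paper likewise invokes Lemma \ref{xxlem1.6} to get the graded $\gamma_{n,i}(M)$ property and then cites ``a graded version of the proof of Lemma \ref{xxlem1.3}'', which is exactly the word-for-word transcription you carry out (your verification of exactness of $\GKdim$ via \eqref{E1.5.1}, respectively Remark \ref{xxrem1.5}(2), is the same background fact the paper relies on). No gaps; your write-up just makes explicit the routine checks the paper leaves implicit.
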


\begin{proof} By Lemma \ref{xxlem1.6}, $\partial$ satisfies
$\gamma_{n,i}(M)$ for all $n$ and $i$ and all $M$. The assertion
follows from a graded version of the proof of Lemma \ref{xxlem1.3}.
\end{proof}

To conclude this section we recall and introduce a couple of more definitions.

The notion of Artin-Schelter Gorensteinness was generalized to the
nonconnected graded case by several authors. We are going to use the
following version. Let $A$ be a locally finite graded algebra such that
$A_0$ is semisimple.

\begin{definition}
\cite[Definition 2.1]{MM}
\label{xxdef1.8}
Let $\Bbbk$ be a field.
A locally finite graded algebra $A$ with $A_0$ being
semisimple is called {\it Artin-Schelter Gorenstein}
(or {\it AS Gorenstein}) if the following conditions hold:
\begin{enumerate}
\item[(a)]
$A$ has finite injective dimension $d$ on both sides,
\item[(b)]
$\uExt^i_A(A_0, A) = \uExt^i_{A^{op}} (A_0, A) = 0$ for all
$i\neq d$ where $A_0 = A/A_{\geq 1}$, and
\item[(c)]
$\uExt^d_A(A_0, A)\cong A_0(l)$
and $\uExt^d_{A^{op}} (A_0, A)\cong
A_0(l)$ for some integer $l$.
\end{enumerate}
If moreover
\begin{enumerate}
\item[(d)]
$A$ has finite global dimension, and
\item[(e)]
$A$ has finite GK-dimension,
\end{enumerate}
then $A$ is called {\it Artin-Schelter regular} (or {\it AS regular}).
\end{definition}

In \cite[Definition 2.1]{MM}, the AS-Gorensteinness only 
requires that $A_0$ is finite dimensional and 
$\uExt_A^d(A_0,A)\cong A_0^*(l)$ where $A_0^*=\Hom_\Bbbk(A_0,\Bbbk)$. 
When $A_0$ is semisimple, the two definitions coincides since 
$A_0\cong A_0^*$ as $A_0$-bimodules in this case.

In the introduction, $R$ is connected graded AS regular.

When $A$ is noetherian, connected graded and PI of finite global
dimension, then $A$ is AS regular and CM \cite[Theorem 1.1]{SZ}.
It is conjectured that every noetherian connected graded AS regular
algebra is CM.

\begin{definition}
\label{xxdef1.9}
Let $R$ be a subalgebra of $B$ and let $\partial$ be a dimension
function defined on right $R$-modules and right $B$-modules.
\begin{enumerate}
\item[(1)]
We say $\partial$ is {\it weakly $B/R$-hereditary} if $\partial(M_R)
\leq \partial(M_B)$ for every f.g. right $B$-module $M$.
\item[(2)]
Suppose $B_R$ is f.g.. We say $\partial$ is
{\it $B/R$-hereditary} if for every f.g. right
$B$-module $M$, $\partial(M_R)=\partial(M_B)$.
\end{enumerate}
\end{definition}

\begin{lemma}
\label{xxlem1.10} Let $R$ be a subalgebra of $B$.
\begin{enumerate}
\item[(1)]
$\GKdim$ is always weakly $B/R$-hereditary.
\item[(2)]
Suppose that $R\subseteq B$ are noetherian and locally
finite graded algebras such that $B_R$ is f.g. Then $\GKdim$
is $B/R$-hereditary on graded right modules.
\end{enumerate}
\end{lemma}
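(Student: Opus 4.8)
The plan is to prove part (1) first, then deduce part (2) by combining (1) with the reverse inequality in the graded, finite-module setting. For part (1), let $M$ be a finitely generated right $B$-module; I want $\GKdim(M_R) \le \GKdim(M_B)$. The point is that restriction only shrinks the collection of module elements available for computing growth: any finitely generated $\Bbbk$-submodule $W \subseteq M$ and any finitely generated $\Bbbk$-submodule $V \subseteq R$ give rise to the chain $WV^n$, but since $V \subseteq R \subseteq B$, we may regard $V$ as a finitely generated $\Bbbk$-submodule of $B$, and then $WV^n$ (formed using the $B$-action, which agrees with the $R$-action) is one of the subspaces appearing in the supremum defining $\GKdim(M_B)$. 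Hence the supremum over all pairs $(V,W)$ with $V\subseteq R$ is bounded above by the supremum over all pairs with $V\subseteq B$, which is exactly $\GKdim(M_B)$. This is essentially immediate from Definition \ref{xxdef1.1}(2) and requires no finiteness hypothesis on $B_R$; I would spell out the one-line inclusion argument carefully since it is the entire content.

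For part (2), I now assume $R \subseteq B$ are noetherian locally finite graded algebras with $B_R$ finitely generated, and $M$ is a finitely generated graded right $B$-module; I must upgrade the inequality of (1) to an equality, i.e.\ establish $\GKdim(M_R) \ge \GKdim(M_B)$. Since $B_R$ is finitely generated and $M_B$ is finitely generated, $M_R$ is finitely generated as well (a generating set for $M$ over $B$ times a generating set for $B$ over $R$ generates $M$ over $R$). Both $M_R$ and $M_B$ are then finitely generated graded modules over noetherian locally finite graded algebras, so by \eqref{E1.5.1} their GK-dimensions are computed from the same Hilbert-series data: $\GKdim(M_B) = \overline{\lim}_{k}\log_k \sum_{j\le k}\dim M_j$, and likewise for $M_R$, where the graded pieces $M_j$ are literally the same $\Bbbk$-modules regardless of whether we view $M$ over $B$ or over $R$. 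Therefore $\GKdim(M_R) = \GKdim(M_B)$ directly, and in particular the inequality of (1) is an equality. (One could alternatively argue the $\ge$ direction without \eqref{E1.5.1} by writing $B = \sum_{i=1}^t R b_i$ and noting that a generating subspace of $M$ over $B$ together with the $b_i$ produces, inside $M$, growth at least matching that over $B$; but invoking \eqref{E1.5.1} is cleaner and is already available in the excerpt.)

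The main obstacle is essentially bookkeeping rather than mathematics: one must be careful that in part (1) the action of $V \subseteq R$ on $W \subseteq M$ genuinely coincides with the $B$-action restricted to $V$, so that the chains $WV^n$ computed in the two settings are identical subspaces of $M$; and in part (2) one must confirm that $M_R$ is finitely generated (so that \eqref{E1.5.1} applies to it) — this is precisely where the hypothesis that $B_R$ is finitely generated is used, and without it equality can fail. No deeper input is needed; the lemma is a direct unwinding of the definition of GK-dimension together with the graded Hilbert-series formula \eqref{E1.5.1}.
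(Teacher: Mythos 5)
Your proposal is correct and follows the same route as the paper: part (1) is just Definition \ref{xxdef1.1}(2) (the supremum over $V\subseteq R$ is a sub-supremum of that over $V\subseteq B$), and part (2) uses that $B_R$ f.g.\ makes $M_R$ f.g., so \eqref{E1.5.1} computes both GK-dimensions from the same Hilbert-series data, giving equality.
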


\begin{proof} (1) This follows from Definition \ref{xxdef1.1}(2).

(2) Let $M$ be a f.g. graded right $B$-module. Since
$B_R$ is f.g., $M$ is a f.g. $R$-module.
By \eqref{E1.5.1}, $\GKdim M$ is only dependent on the
Hilbert series of $M$, not the module structure.
Therefore $\GKdim M_B=\GKdim M_R$.
\end{proof}

\section{A Morita type Theorem}
\label{xxsec2}

The main result of this section is Theorem \ref{xxthm2.4}.
We consider the following hypotheses to be used in different situations.

\begin{hypothesis}
\label{xxhyp2.1}
\begin{enumerate}
\item[(1)]
$A$ and $B$ are noetherian algebras.
\item[(2)]
Let $e$ be an idempotent in $B$ and $A=eBe$.
\item[(3)]
$\partial$ is a dimension function defined on right $A$-modules and
$B$-modules and $\partial(B)=:d\geq 2$.
\item[(4)]
$B$ is a $\partial$-CM$(2)$ algebra.
\item[(5)]
$eB$ and $Be$ are f.g. $A$-modules.
\item[(6)]
$\partial$ is exact both on right $A$-modules and on right $B$-modules.
\item[(7)]
For every f.g. right $B$-module $N$,
$\partial ((Ne)_A)(=\partial(N\otimes_B Be))\leq \partial (N_B)$.
\end{enumerate}
\end{hypothesis}

Note that Hypothesis \ref{xxhyp2.1}(7) is always satisfied if 
$\partial=\GKdim$ (cf. \cite[Proposition 5.6]{KL}).

Under Hypothesis \ref{xxhyp2.1}(2), $Be$ is a right
$A$-module and there is a natural algebra morphism
\begin{equation}
\label{E2.1.1}\tag{E2.1.1}
\varphi: \; B\to \End_{A}(Be),\quad \varphi(b)(b'e)=bb'e
\end{equation}
induced by the left multiplication. In this section
we will investigate when this morphism $\varphi$ is an
isomorphism. A special case of this situation is the Auslander theorem
\cite[Lemma 10.8]{Yo}.

We need use some weaker versions of CM condition. Let $\partial$
be a dimension function and $\alpha$ be an integer.
\begin{enumerate}
\item[(a)]
We say a noetherian algebra $B$ is {\it $\partial$-CM$^l(\alpha)$} 
if for any f.g. right $B$-module $M$,
$$\partial(M)\leq d-\alpha\Longrightarrow j(M)\geq \alpha$$
\item[(b)]
We say a noetherian algebra $B$ is {\it $\partial$-CM$^r(\alpha)$} 
if for any f.g. right $B$-module $M$,
$$\partial(M)\leq d-\alpha\Longleftarrow j(M)\geq \alpha.$$
\end{enumerate}
Then $B$ is $\partial$-CM$(\alpha)$ if and only if it is both
$\partial$-CM$^l(\alpha)$ and $\partial$-CM$^r(\alpha)$.

\begin{lemma}
\label{xxlem2.2}
Let $(A,B)$ satisfy Hypothesis {\rm{\ref{xxhyp2.1}(1,2,3)}}
and $\alpha$ be an integer such that $0<\alpha\leq d$. Suppose
that $B$ is $\partial$-CM$^l(\alpha)$. Let $N$
be a f.g. right $A$-module. If
$\partial(\Tor_i^A(N,eB))\leq d-\alpha$ for all $i\leq \alpha-1$, then
$\Ext_{A}^i(N,Be)=0$ for all $i\leq \alpha-1$.
\end{lemma}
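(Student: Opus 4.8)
The plan is to compute $\Ext_A^{\ast}(N,Be)$ through a change-of-rings (hyper-$\Ext$) spectral sequence that expresses it in terms of $\Ext_B^{\ast}(\Tor_\ast^A(N,eB),B)$, and then to kill the relevant entries using the one-sided Cohen--Macaulay hypothesis on $B$. The conceptual core is a single adjunction identity together with the projectivity of $eB$ over $B$.

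First I would record the elementary facts attached to the idempotent $e\in B$. Since $B=eB\oplus(1-e)B$ as right $B$-modules, $eB$ is a f.g. projective right $B$-module; since $B=Be\oplus B(1-e)$ as left $B$-modules, the functor $M\mapsto Me=M\otimes_B Be$ on right $B$-modules is exact. A $B$-linear map $eB\to M$ is determined by the image of $e$, which necessarily lies in $Me$, so $\Hom_B(eB,M)\cong Me$ naturally in $M$; taking $M=B$ gives $\Hom_B(eB,B)\cong Be$ (compatibly with the right $A=eBe$-action coming from the left $A$-action on $eB$). Combining this with the tensor--hom adjunction for the $(A,B)$-bimodule $eB$ yields, for every right $A$-module $N$, a natural isomorphism $\Hom_B(N\otimes_A eB,\,B)\cong\Hom_A(N,Be)$. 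Note also that $-\otimes_A eB$ sends a f.g.\ free right $A$-module $A^{m}$ to $(eB)^{m}$, hence carries f.g.\ free modules to f.g.\ projective right $B$-modules.

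Next I would choose a resolution $P_\bullet\to N$ of $N$ by f.g.\ free right $A$-modules (possible since $A$ is right noetherian and $N$ is f.g.) and set $Q_\bullet:=P_\bullet\otimes_A eB$, a bounded-below complex of f.g.\ projective right $B$-modules with $H_q(Q_\bullet)=\Tor_q^A(N,eB)$, which is a f.g.\ right $B$-module because $B$ is noetherian. By the previous paragraph $\Hom_B(Q_\bullet,B)\cong\Hom_A(P_\bullet,Be)$ as complexes, so $H^i\bigl(\Hom_B(Q_\bullet,B)\bigr)=\Ext_A^i(N,Be)$; and since $Q_\bullet$ is a bounded-below complex of projectives (hence of $\Hom_B(-,B)$-acyclics), $\Hom_B(Q_\bullet,B)$ represents $\RHom_B(Q_\bullet,B)$ and one has the hyper-$\Ext$ spectral sequence
$$E_2^{p,q}=\Ext_B^{p}\bigl(\Tor_q^A(N,eB),B\bigr)\ \Longrightarrow\ \Ext_A^{p+q}(N,Be).$$
Finally I would feed in the hypotheses: for $q\le\alpha-1$ we are given $\partial\bigl(\Tor_q^A(N,eB)\bigr)\le d-\alpha$, so the $\partial$-CM$^{l}(\alpha)$ property of $B$ forces $j\bigl(\Tor_q^A(N,eB)\bigr)\ge\alpha$, i.e. $\Ext_B^{p}\bigl(\Tor_q^A(N,eB),B\bigr)=0$ for all $p\le\alpha-1$. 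Hence $E_2^{p,q}=0$ whenever $p\le\alpha-1$ and $q\le\alpha-1$, in particular whenever $p+q\le\alpha-1$ with $p,q\ge0$; therefore $E_\infty^{p,q}=0$ for all such $(p,q)$ and $\Ext_A^i(N,Be)=0$ for $i\le\alpha-1$. The only mildly delicate points are the index bookkeeping of the spectral sequence and checking the finiteness needed to apply the Cohen--Macaulay condition; everything else is formal. (Alternatively one could avoid the spectral sequence by a dimension-shifting induction on $\alpha$, using a short exact sequence $0\to N'\to P_0\to N\to0$ with $P_0$ f.g.\ free over $A$ and the long exact sequences for $\Tor^A(-,eB)$ and $\Ext_A(-,Be)$.)
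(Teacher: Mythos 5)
Your proposal is correct and follows essentially the same route as the paper: the adjunction $\Hom_B(N\otimes_A eB,B)\cong\Hom_A(N,Be)$ yields the change-of-rings spectral sequence $E_2^{pq}=\Ext_B^p(\Tor_q^A(N,eB),B)\Rightarrow\Ext_A^{p+q}(N,Be)$, and the $\partial$-CM$^l(\alpha)$ hypothesis kills all $E_2^{pq}$ with $p,q\leq\alpha-1$, forcing the vanishing in total degrees $\leq\alpha-1$. The extra details you supply (projectivity of $eB$, the free resolution $P_\bullet$ and convergence bookkeeping) are exactly what the paper leaves implicit by citing a standard reference for the spectral sequence.
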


\begin{proof} We have canonical isomorphisms induced by the adjointness
$$\Hom_{B}(N\otimes_A eB, B)\cong \Hom_{A}(N,\Hom_B(eB,B))
\cong \Hom_{A}(N,Be).$$
From these isomorphisms, we obtain a spectral sequence
\cite[Theorem 11.54]{Ro}
\begin{equation}
\label{E2.2.1}\tag{E2.2.1}
E_2^{pq}=\Ext^p_{B}(\Tor_q^A(N,eB),B)\Longrightarrow
\Ext^{p+q}_{A}(N,Be).
\end{equation}
By hypothesis, $B$ is $\partial$-CM$^l(\alpha)$, we have
$\Ext^p_{B}(\Tor_q^A(N,eB),B)=0$
for all $q\leq \alpha-1$ and $p\leq \alpha-1$ since
$\partial(\Tor_i^A(N,eB))\leq d-\alpha$
for all $i\leq \alpha-1$. By \eqref{E2.2.1}, we obtain
$\Ext^i_{A}(N,Be)=0$ for all $i\leq \alpha-1$.
\end{proof}

In the next lemma it is not necessary to assume $d\geq 2$.

\begin{lemma}
\label{xxlem2.3}
Let $(A,B)$ satisfy Hypothesis {\rm{\ref{xxhyp2.1}(1-3),(5-7)}}
and $\alpha$ be a positive integer. Suppose that
%\textcolor[rgb]{1.00,0.00,0.00}{$\partial$ is exact on $B$-modules and }
\begin{equation}
\label{E2.3.1}\tag{E2.3.1}
{\text{$\partial$ satisfies $\gamma_{d-\alpha,1}(eB)$.}}
\end{equation}
If $\partial (B/(BeB))\leq d-\alpha$, then
the functor $-\otimes_{\mathcal{B}}\mathcal{B}e: \;
\qmod_{d-\alpha} B\longrightarrow \qmod_{d-\alpha} A$ is an equivalence.
\end{lemma}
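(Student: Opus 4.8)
The plan is to exhibit quasi-inverse functors between $\qmod_{d-\alpha} B$ and $\qmod_{d-\alpha} A$, using $-\otimes_{\mathcal B}\mathcal Be$ in one direction and $-\otimes_{\mathcal A}\mathcal{e}B$ in the other, and to show the two composites are naturally isomorphic to the respective identity functors modulo the Serre subcategories $\gr_{d-\alpha}$. First I would record that Lemma \ref{xxlem1.3} applies: Hypothesis \ref{xxhyp2.1}(5,6,7) plus the assumption \eqref{E2.3.1} that $\partial$ satisfies $\gamma_{d-\alpha,1}(eB)$ give a well-defined functor $-\otimes_{\mathcal B}\mathcal Be: \qmod_{d-\alpha}B\to\qmod_{d-\alpha}A$; symmetrically, since $Be$ is f.g.\ as a right $A$-module and $A=eBe$, one checks $\gamma_{d-\alpha,1}(Be)$ holds (or that $eB$ being f.g.\ over $A$ on the relevant side suffices, after applying \ref{xxhyp2.1}(7)), so $-\otimes_{\mathcal A}\mathcal eB:\qmod_{d-\alpha}A\to\qmod_{d-\alpha}B$ is also well-defined.

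Next I would analyze the two composites at the level of module categories. The composite $-\otimes_B Be\otimes_A eB$ comes with the natural multiplication map $N\otimes_B BeB\to N$, and $-\otimes_A eB\otimes_B Be$ comes with the canonical map $M\otimes_A e Be = M\otimes_A A\xrightarrow{\ \sim\ } M$, which is actually an isomorphism since $eBe=A$. So the $A$-side composite is literally the identity functor, hence certainly the identity on $\qmod_{d-\alpha}A$. The work is on the $B$-side: I must show the multiplication $\mu_N\colon N\otimes_B BeB\to N$ becomes an isomorphism in $\qmod_{d-\alpha}B$ for every f.g.\ right $B$-module $N$. Its cokernel is $N\otimes_B (B/BeB)$, a quotient of $N\otimes_B(B/BeB)$; since $\partial(B/BeB)\le d-\alpha$ by hypothesis and $\gamma_{d-\alpha,0}$-type estimates (Remark \ref{xxrem1.5}(3), or directly Hypothesis \ref{xxhyp2.1}(7) applied appropriately) control the $\partial$-dimension of $N\otimes_B(B/BeB)$, the cokernel lies in $\gr_{d-\alpha}B$. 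For the kernel I would use the long exact Tor sequence for $0\to BeB\to B\to B/BeB\to 0$: the kernel of $\mu_N$ is a subquotient built from $N\otimes_B BeB$ mapping into $N$, controlled by $\Tor_1^B(N,B/BeB)$, whose $\partial$-dimension is $\le d-\alpha$ by the $\gamma_{d-\alpha,1}$ hypothesis \eqref{E2.3.1} transported through the identification of the relevant Tor groups (here I may need $\gamma_{d-\alpha,1}(eB)$ together with the resolution of $B/BeB$ by modules involving $eB$, or a direct $\gamma_{d-\alpha,1}(B/BeB)$ argument). Hence $\ker\mu_N\in\gr_{d-\alpha}B$ as well, so $\pi(\mu_N)$ is an isomorphism.

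Finally I would package these natural isomorphisms: the collection $\{\pi(\mu_N)\}_N$ is natural in $N$ because $\mu$ is a natural transformation at the level of $\mmod B$, and passing to the quotient category is functorial; similarly on the $A$-side the identity is trivially natural. Therefore $-\otimes_{\mathcal B}\mathcal Be$ and $-\otimes_{\mathcal A}\mathcal eB$ are mutually quasi-inverse, giving the claimed equivalence $\qmod_{d-\alpha}B\simeq\qmod_{d-\alpha}A$. The main obstacle I anticipate is the bookkeeping needed to bound $\partial(\ker\mu_N)$ and $\partial(\operatorname{coker}\mu_N)$ by $d-\alpha$ using only the single hypothesis $\gamma_{d-\alpha,1}(eB)$ rather than a blanket $\gamma$-property: one must either produce a short enough (length $\le 1$ in the relevant range) resolution-type argument connecting $\Tor^B_\bullet(N,B/BeB)$ to $\Tor^A_\bullet(-,eB)$-data, or observe that $B$ being $\partial$-CM$(2)$ together with $d\ge 2$ forces the auxiliary modules into the right dimension range; making this reduction precise, and confirming that the exactness hypothesis \ref{xxhyp2.1}(6) lets all the dimension estimates pass to non-f.g.\ modules via \eqref{E1.0.3}, is the delicate point. (Note that the hypotheses deliberately do not invoke \ref{xxhyp2.1}(4), the CM$(2)$ condition, nor $d\ge 2$, so the argument must go through purely on \ref{xxhyp2.1}(1-3),(5-7) plus \eqref{E2.3.1} and $\partial(B/BeB)\le d-\alpha$.)
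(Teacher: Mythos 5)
Your overall strategy coincides with the paper's: both directions are induced via Lemma \ref{xxlem1.3}, the $A$-side composite is the identity because $eB\otimes_B Be\cong eBe=A$, and the whole content is to show the $B$-side counit has kernel and cokernel of $\partial$-dimension $\leq d-\alpha$. (A small slip first: you attach \eqref{E2.3.1} to the wrong functor. The hypothesis $\gamma_{d-\alpha,1}(eB)$ is exactly what makes $-\otimes_{\mathcal A}e\mathcal B:\qmod_{d-\alpha}A\to\qmod_{d-\alpha}B$ well defined, while $-\otimes_{\mathcal B}\mathcal Be$ is automatic because $Be$ is projective as a left $B$-module, so $\Tor_1^B(-,Be)=0$, together with Hypothesis \ref{xxhyp2.1}(7); both facts appear in your write-up, just swapped.)

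The genuine gap is at the $B$-side step, in two places. First, you work with the map $N\otimes_B BeB\to N$, but the natural transformation you must analyze lives on the actual composite $N\otimes_B Be\otimes_A eB$. These differ by the kernel $K$ of the multiplication $Be\otimes_A eB\to BeB$, which you never address. The paper's argument is that applying $-\otimes_B Be$ (exact, since $Be$ is left projective) to $0\to K\to Be\otimes_A eB\to BeB\to 0$ yields $0\to Ke\to Be\otimes_A eBe\to Be\to 0$, forcing $Ke=0$ (and similarly $eK=0$); hence $K$, and therefore $N\otimes_B K$, is a finitely generated right $B/BeB$-module, so $\pi(N\otimes_B Be\otimes_A eB)\cong\pi(N\otimes_B BeB)$. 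Second, the estimate you yourself flag as the ``delicate point'' --- bounding $\partial(\Tor_1^B(N,B/BeB))$ and $\partial(N\otimes_B(B/BeB))$ --- cannot be obtained the ways you suggest: $\gamma_{d-\alpha,1}(eB)$ (or any $\gamma_{n,1}$ condition) only speaks about modules $N$ with $\partial(N)\leq n$, whereas here $N$ is an arbitrary finitely generated $B$-module, and the CM$(2)$ condition is deliberately excluded from the hypotheses, as you note. The correct argument is much simpler and needs neither: since $B$ is noetherian, $\Tor_1^B(N,B/BeB)$, $N\otimes_B(B/BeB)$ and $N\otimes_B K$ are all finitely generated right $B/BeB$-modules, and any such module, viewed as a $B$-module, is a subquotient of a finite direct sum of copies of $B/BeB$, so by \eqref{E1.0.1}--\eqref{E1.0.2} its $\partial$-dimension is at most $\partial(B/BeB)\leq d-\alpha$. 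With these two points supplied, your argument closes and agrees with the paper's proof.
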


\begin{proof}
By hypothesis \eqref{E2.3.1}, $\partial$ satisfies
$\gamma_{d-\alpha,1}(eB)$. By Lemma \ref{xxlem1.3}, we have the
induced functor $-\otimes_\mathcal{A}e\mathcal{B}: \qmod_{d-\alpha} A
\longrightarrow\qmod_{d-\alpha} B$. Since $Be$ is projective as a left
$B$-module, the hypothesis of Lemma \ref{xxlem1.3} follows from
Hypothesis \ref{xxhyp2.1}(7) with $M=Be$ and $A$ and $B$ are switched.
By Lemma \ref{xxlem1.3}, we have the
induced functor $-\otimes_\mathcal{B} \mathcal{B}e: \qmod_{d-\alpha} B
\longrightarrow\qmod_{d-\alpha} A$. For any $N\in \mmod A$, we have
natural isomorphisms in $\qmod_{d-\alpha} A$:
$$\mathcal{N}\otimes_\mathcal{A}e\mathcal{B}\otimes_\mathcal{B}
\mathcal{B}e\cong \pi(N\otimes_A eB\otimes_B Be)\cong
\pi(N)=\mathcal{N}.$$
Taking $M\in\mmod B$,  we have natural isomorphisms in
$\qmod_{d-\alpha} B$:
$$\mathcal{M}\otimes_\mathcal{B}\mathcal{B}e\otimes_\mathcal{A}e
\mathcal{B}\cong \pi(M\otimes_B Be\otimes_A eB).$$
We claim that $\pi(M\otimes_B Be\otimes_A eB)=\pi(M)$.
Let $f: Be\otimes_A eB\to B$ be induced by the multiplication in $B$ and
factor $f$ as $f=hg$ with $h$ a monomorphism and $g$ an
epimorphism. Then we have two short exact sequences as follows,
\begin{equation}
\label{E2.3.2}\tag{E2.3.2}
0\to K\to Be\otimes_AeB\overset{g}\to BeB\to 0,
\end{equation}
\begin{equation}
\label{E2.3.3}\tag{E2.3.3}
0\to BeB\overset{h}\to B\to B/BeB\to 0.
\end{equation}
Applying the functor $-\otimes_BBe$ to \eqref{E2.3.2},
we obtain the exact sequence
$$0\to Ke\to Be\otimes_AeBe\to Be\to 0,$$
which results that $Ke=0$. Similarly, $eK=0$. Since $Be$ is a
f.g. right $A$-module, $Be\otimes_A eB$ is a f.g.
right $B$-module. Hence $K$ is a f.g. right $B/BeB$-module. 
Since $\partial (B/BeB)\leq d-\alpha$ and $\partial$ is exact on right $B$-modules, we have
$\partial (K)\leq d-\alpha$. Consider the exact sequence
$$M\otimes_BK\to M\otimes_B Be\otimes_AeB\to M\otimes_BBeB\to 0.$$
Since $M\otimes_B K$ is a f.g. $B/BeB$-module,
$\partial (M\otimes_B K)\leq d-\alpha$. Now we have
$\pi(M\otimes_B Be\otimes_AeB)\cong \pi(M\otimes_BBeB)$ in $\qmod_{d-\alpha} B$.
From the exact sequence (\ref{E2.3.3}), we obtain an exact sequence
$$\Tor_1^B(M,B/BeB)\to M\otimes_B BeB\to M\to M\otimes_BB/BeB\to0.$$
Note that both $\Tor_1^B(M,B/BeB)$ and $M\otimes_BB/BeB$ are f.g. right $B/BeB$-modules.
Since the $\partial$-dimension of $B/BeB$ is not larger than $d-\alpha$,
neither are the $\partial$-dimensions of $M\otimes_BB/BeB$ and
$\Tor_1^B(M,B/BeB)$. Hence we have natural isomorphisms in
$\qmod_{d-\alpha} B$: $\pi (M\otimes_B BeB)\cong \pi(M)$. Summarizing,
we obtain
$$\mathcal{M}\otimes_\mathcal{B}\mathcal{B}e\otimes_\mathcal{A}e
\mathcal{B}\cong \pi(M\otimes_B Be\otimes_A eB)\cong\pi(M)=\mathcal{M}$$
as desired.
\end{proof}

A special case of Lemma \ref{xxlem2.3} is the following well-known statement:
if $e$ is an idempotent in $B$ such that $BeB=B$, then $B$ is Morita
equivalent to $A:=eBe$. Most of Hypothesis \ref{xxhyp2.1} is not necessary in
this special case. Therefore Lemma \ref{xxlem2.3} can be viewed as a generalization
of Morita theorem. Specializing the last statement to a semisimple $H$-action
on an algebra $A$, if $e:=1\# \inth$ in $B:=A\# H$ is such that
$BeB=B$, then $A^H$ is Morita equivalent to $A\# H$, see
\cite[Corollary 4.5.4]{Mon}.

The main result of this section is the following Morita type theorem.

\begin{theorem}
\label{xxthm2.4}
Let $(A,B)$ satisfy Hypothesis {\rm{\ref{xxhyp2.1}(1-7)}}. Suppose
\begin{equation}
\label{E2.4.1}\tag{E2.4.1}
{\text{$\partial$ satisfies $\gamma_{d-2,1}(eB)$.}}
\end{equation}
Then the following statements are equivalent.
\begin{enumerate}
\item[(i)]
The functor $-\otimes_{\mathcal{B}}\mathcal{B}e: \;
\qmod_{d-2} B\longrightarrow \qmod_{d-2} A$ is an equivalence.
\item[(ii)]
The natural map $\varphi$ of \eqref{E2.1.1} is an isomorphism
of algebras.
\item[(iii)] $\partial (B/(BeB))\leq d-2$.
\item[(iv)]
$j(B/BeB)\geq 2$.
\end{enumerate}
\end{theorem}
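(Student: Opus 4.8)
The plan is to establish the cycle of implications $(iii)\Leftrightarrow(iv)$, $(iii)\Rightarrow(i)$, $(i)\Rightarrow(ii)$, and $(ii)\Rightarrow(iii)$. The equivalence $(iii)\Leftrightarrow(iv)$ is immediate: by Hypothesis \ref{xxhyp2.1}(4), $B$ is $\partial$-CM$(2)$, hence $B$ is both $\partial$-CM$^l(2)$ and $\partial$-CM$^r(2)$, which says precisely that for the f.g. right $B$-module $B/(BeB)$ we have $\partial(B/(BeB))\leq d-2 \Leftrightarrow j(B/(BeB))\geq 2$. The implication $(iii)\Rightarrow(i)$ is exactly Lemma \ref{xxlem2.3} applied with $\alpha=2$, since \eqref{E2.4.1} is \eqref{E2.3.1} for $\alpha=2$ and all the remaining hypotheses of Lemma \ref{xxlem2.3} are part of Hypothesis \ref{xxhyp2.1}(1--7); so there is nothing to do there.

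For $(i)\Rightarrow(ii)$: first I would note that $\varphi$ fits into the comparison between $B$ and $\End_A(Be)$, and that the kernel and cokernel of $\varphi$ can be analyzed through the exact sequences \eqref{E2.3.2} and \eqref{E2.3.3} from the proof of Lemma \ref{xxlem2.3}. The point is that $\End_A(Be) \cong \Hom_B(Be\otimes_A eB, Be)$ is computed from $Be \otimes_A eB$, and the map $f: Be\otimes_A eB \to B$ has kernel $K$ and cokernel $B/(BeB)$, both f.g. as $B/(BeB)$-modules. If the functor $-\otimes_{\mathcal B}\mathcal Be$ is an equivalence, then in particular it is faithful on $\qmod_{d-2}B$, and $\ker\varphi$, $\mathrm{coker}\,\varphi$ are annihilated by $e$ on both sides (as in the $Ke=eK=0$ argument of Lemma \ref{xxlem2.3}), hence are $B/(BeB)$-modules; one then uses that the equivalence forces $\partial(B/(BeB))\leq d-2$ and runs the $\partial$-CM$(2)$ argument to kill the relevant $\Ext$'s, or more directly argues that an equivalence of the quotient categories that sends $\mathcal B$ to $\mathcal A$ must identify $\End_{\qmod}(\mathcal B)$ with $\End_{\qmod}(\mathcal A)$, and combines this with $\partial$-CM$(2)$ (via Lemma \ref{xxlem2.2} applied to $N=B/(BeB)$ to see that low $\Ext_A$ vanish) to conclude $\varphi$ is bijective.

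For $(ii)\Rightarrow(iii)$: assume $\varphi: B \xrightarrow{\sim} \End_A(Be)$ is an isomorphism. I would apply $\Hom_A(-,Be)$ to the short exact sequence $0 \to K \to Be\otimes_A eB \xrightarrow{g} BeB \to 0$ of \eqref{E2.3.2} together with $0 \to BeB \xrightarrow{h} B \to B/(BeB) \to 0$ of \eqref{E2.3.3}, and use the adjunction isomorphism $\Hom_B(N\otimes_A eB, B) \cong \Hom_A(N, Be)$ together with the fact that $Be$ is projective over $B$ (so $\Hom_A(-,Be)$ relates to $\Ext^*_B(-\otimes_A eB, B)$ via the spectral sequence \eqref{E2.2.1}). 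Concretely, $\varphi$ being an isomorphism gives that $\Hom_B(B/(BeB), B) = 0$ and a control of $\Ext^1_B(B/(BeB),B)$; then by $\partial$-CM$^r$-type reasoning — the contrapositive of $\partial$-CM$^l(2)$ would let a module of dimension $> d-2$ have nonzero $\Hom$ or $\Ext^1$ into $B$ — one deduces $j(B/(BeB)) \geq 2$, i.e. $(iv)$, hence $(iii)$.

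\textbf{Main obstacle.} The hard part will be the bookkeeping in $(ii)\Rightarrow(iii)$: one must carefully extract from the single statement ``$\varphi$ is an isomorphism'' the vanishing of $\Hom_B(B/(BeB),B)$ \emph{and} $\Ext^1_B(B/(BeB),B)$, threading through the two short exact sequences and the adjunction, and then invoke the $\partial$-CM$(2)$ hypothesis in the form that says a module with $\Ext^i_B(-,B)=0$ for $i\leq 1$ must have $\partial$-dimension $\leq d-2$. Keeping straight which side ($K$ versus $B/(BeB)$, left versus right annihilation by $e$) contributes what, and making sure Hypothesis \ref{xxhyp2.1}(7) is used where needed to control $\partial((Ne)_A)$, is where the care is required; the rest is assembling already-established lemmas.
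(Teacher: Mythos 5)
Your plan is essentially the paper's proof: the same cycle is used there, namely (iii)$\Rightarrow$(i) via Lemma \ref{xxlem2.3} with $\alpha=2$, (i)$\Rightarrow$(ii) by computing $\End_{\qmod_{d-2}B}(\mathcal{B})\cong B$ from $\partial$-CM$^l(2)$ and matching it with the endomorphism ring of the image of $\mathcal{B}$ via Lemma \ref{xxlem2.2}, and (ii)$\Rightarrow$(iv)$\Rightarrow$(iii) by writing $\varphi$ as the composite $\Hom_B(B,B)\to\Hom_B(Be\otimes_AeB,B)\to\Hom_A(Be,Be)$, threading through \eqref{E2.3.2}--\eqref{E2.3.3} to get $\Hom_B(B/BeB,B)=0=\Ext^1_B(B/BeB,B)$, and then invoking $\partial$-CM$^r(2)$. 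Two small corrections to your (i)$\Rightarrow$(ii): the equivalence sends $\mathcal{B}$ to $\pi(Be)$, not to $\mathcal{A}$, and Lemma \ref{xxlem2.2} must be applied to \emph{every} f.g.\ right $A$-module $N$ with $\partial(N)\leq d-2$ (not to the $B$-module $B/(BeB)$) so as to conclude $\Hom_A(N,Be)=0=\Ext^1_A(N,Be)$ and hence $\End_{\qmod_{d-2}A}(\pi(Be))\cong\Hom_A(Be,Be)$; with these adjustments your second, ``more direct'' route is exactly the paper's argument.
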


\begin{proof} (i) $\Longrightarrow$ (ii).
Let $F=-\otimes_\mathcal{B}\mathcal{B}e$. Since $F$ is an
equivalence, we obtain isomorphisms of algebras:
\begin{equation}
\label{E2.4.2}\tag{E2.4.2}
\End_{\qmod_{d-2}B} \;\mathcal{B}\cong \End_{\qmod_{d-2}A}\;
F(\mathcal{B}) =\End_{\qmod_{d-2}A}\pi(Be).
\end{equation}
By assumption, $B$ is $\partial$-CM$^l(2)$. Hence, for $M\in
\GKmod_{d-2}B$,
$$\Hom_B(M,B)=0=\Ext_B^1(M,B).$$
As a consequence, $_BB$ does not contain any nonzero
submodule of $\partial$-dimension at most $d-2$.
Therefore, by definition,
$$\End_{\qmod_{d-2}B} \mathcal{B}=\underset{\longrightarrow}{\lim}\Hom_B(K,B),$$
where the limit runs over all the right submodule modules $K\subseteq B$ such
that $\partial (B/K)\leq d-2$. For such a submodule $K$, we have
$$\Hom_B(B/K,B)=0=\Ext_B^1(B/K,B).$$
Hence, a long exact sequence
implies that $\Hom_B(K,B)=\Hom_B(B,B)=B$. Thus, we have
$\End_{\qmod_{d-2}B} \mathcal{B}\cong B$.

Let $N\in \GKmod_{d-2} A$. By Lemma \ref{xxlem1.3}(1) and
\eqref{E2.4.1}, the hypothesis of Lemma \ref{xxlem2.2}
concerning $\partial(\Tor^A_i(N,eB))$ holds for $\alpha=2$.
Hence, by Lemma \ref{xxlem2.2},
\begin{equation}
\label{E2.4.3}\tag{E2.4.3}
\Hom_A(N,Be)=0=\Ext_A^1(N,Be)
\end{equation}
for all $N\in\GKmod_{d-2}A$. In particular, $Be$ does not
have any nonzero right $A$-submodule of $\partial$--dimension
$\leq d-2$. Using this and \eqref{E2.4.3}, we have
$$\Hom_{\qmod_{d-2}A}({\mathcal B}e,{\mathcal B}e)=
\underset{\longrightarrow}\lim\Hom_A(K,Be)\cong
\Hom_A(Be,Be),$$
where the limit runs over all the submodules
$K\subseteq Be$ such that $\partial (Be/K)\leq d-2$.
Combining with \eqref{E2.4.2}, one sees that
$B\cong \Hom_A(Be,Be)$, and the isomorphism is the natural
morphism given in statement (ii).

(ii) $\Longrightarrow$ (iv).
Let $f:Be\otimes_A eB\longrightarrow B$ be the $B$-bimodule
morphism defined by $f(be\otimes_A eb')=beb'$, which
was used in the proof of Lemma \ref{xxlem2.3}. Consider the
following composition of morphisms:
%{\tiny $$\xymatrix{
%B=\Hom_B(B,B)\ar[r]^{\Hom_B(f,B)} & \Hom_B(Be\otimes_AeB,B)
%\ar[r]^\phi & \Hom_A(Be,\Hom_B(eB,B))\ar[r]^{\qquad \alpha}
%&\Hom_A(Be,Be), }$$}
$$
B=\Hom_B(B,B)\overset{\Hom_B(f,B)}\longrightarrow
\Hom_B(Be\otimes_AeB,B)\qquad\qquad\qquad\qquad\qquad\qquad$$
$$\qquad\qquad\qquad\qquad\qquad\qquad   \; \overset{\phi}\longrightarrow
\Hom_A(Be,\Hom_B(eB,B))
\overset{\alpha}\longrightarrow\Hom_A(Be,Be),$$
\noindent
where $\phi$ is the natural isomorphism defined by the adjointness,
and $\alpha$ is the isomorphism induced by the isomorphism
$\Hom_B(eB,B)\cong Be$. One sees that the composition above is
equal to the natural morphism
$$\varphi:B\longrightarrow\text{\rm End}_A(Be), \quad
\varphi(b)(b'e)=bb'e.$$
By (ii), it is an isomorphism. Hence the morphism
$$\Hom_B(f,B):\Hom_B(B,B)\longrightarrow \Hom_B(Be\otimes_AeB,B)$$
is an isomorphism. Let us factor the morphism $f$ as follows:
$$\xymatrix{
Be\otimes_A eB \ar[rr]^{f} \ar[dr]_{g}
                &  &   B  \\
                & C, \ar[ur]^{h}
								}$$
where $C={\rm{im}} f=BeB$, $g$ is the epimorphism induced by
$f$ and $h$ is the inclusion map. Applying the functor
$\Hom_B(-,B)$ to the diagram above, we have
$\Hom_B(f,B)=\Hom_B(g,B)\circ\Hom_B(h,B)$. Since $g$ is an
epimorphism, it follows that $\Hom_B(g,B)$ is a monomorphism,
and hence it has to be an isomorphism, which in turn
implies that $\Hom_B(h,B)$ is an isomorphism. From the exact
sequence
$$0\longrightarrow C\overset{h}\longrightarrow
B\longrightarrow B/BeB\longrightarrow 0,$$
we obtain the following exact sequence
%{\tiny $$0\longrightarrow \Hom_B(B/BeB,B)\longrightarrow\Hom_B(B,B)
%\overset{\Hom_B(h,B)}\longrightarrow\Hom_B(C,B)
%\longrightarrow \Ext_B^1(B/BeB,B)\longrightarrow 0.$$}
%
$$0\longrightarrow \Hom_B(B/BeB,B)\longrightarrow\Hom_B(B,B)
\qquad\qquad\qquad\qquad\qquad\qquad$$
$$\qquad\qquad\qquad\qquad\qquad\qquad
\overset{\Hom_B(h,B)}\longrightarrow\Hom_B(C,B)
\longrightarrow \Ext_B^1(B/BeB,B)\longrightarrow 0.$$

Since $\Hom_B(h,B)$ is an isomorphism, we obtain $\Hom_B(B/BeB,B)=0$
and $\Ext_B^1(B/BeB,B)=0$. Hence $j(B/(BeB))\ge2$.

(iv) $\Longrightarrow$ (iii) This follows from
$\partial$-CM$^r(2)$.

(iii) $\Longrightarrow$ (i). This is a special case of Lemma
\ref{xxlem2.3} when $\alpha=2$.
\end{proof}

\begin{remark}
\label{xxrem2.5} We have the following observations concerning
the proof of Theorem \ref{xxthm2.4}.
\begin{enumerate}
\item[(1)]
In the proof of (i) $\Longrightarrow$ (ii), only $\partial$-CM$^l(2)$
is used (not the entire $\partial$-CM$(2)$).
\item[(2)]
In the proof of (iv) $\Longrightarrow$ (iii), only $\partial$-CM$^r(2)$
is used (not the entire $\partial$-CM$(2)$). In fact, we only used
the fact that $j(B/BeB)\geq 2$ implies that
$\partial(B/BeB)\leq d-2$.
\item[(3)]
In the proof of (iii) $\Longrightarrow$ (i), $\partial$-CM$(2)$
is not needed.
\item[(4)]
(iii) $\Longrightarrow$ (iv) Follows from the definition of
$\partial$-CM$(2)$.
\end{enumerate}
\end{remark}

Now let $B=\bigoplus_{i\ge0}B_i$ be a noetherian locally finite graded
algebra. Let $e\in B_0$ be an idempotent. Then $A:=eBe$ is a noetherian
locally finite graded algebra. Following Definition \ref{xxdef1.4},
$B$ is said to be {\it graded CM} if
$\GKdim B=d<\infty$ and $j(M)+\GKdim K=\GKdim B$ for all
f.g. graded right $B$-modules $M$.

Note that the GK-dimension is always exact on modules over noetherian
locally finite graded algebras [Remark \ref{xxrem1.5}(2)].
By Lemma \ref{xxlem1.6}, $\GKdim$ satisfies $\gamma_{n,i}$.
Note that, by \eqref{E1.5.1}, $\GKdim$ satisfies Hypothesis
\ref{xxhyp2.1}(7) in the locally finite graded case. Therefore we
obtain the following graded version of Theorem \ref{xxthm2.4}.

\begin{theorem}
\label{xxthm2.6} Let $B=\bigoplus_{i\ge0}B_i$ be a noetherian
locally finite graded algebra. Suppose $(A,B)$ satisfies
Hypothesis {\rm{\ref{xxhyp2.1}(1-5)}} in the graded setting
with $e\in B_0$ and $\partial=\GKdim$.
Then the following statements are equivalent.
\begin{enumerate}
\item[(i)]
The functor $-\otimes_{\mathcal{B}}\mathcal{B}e:
\qgr_{d-2} B\longrightarrow \qgr_{d-2} A$ is an equivalence.
\item[(ii)]
The natural map $\varphi:B\longrightarrow\text{\rm End}_A(Be)$
defined by $\varphi(b)(b'e)=bb'e$ is an isomorphism of graded algebras.
\item[(iii)]
$\GKdim B/(BeB)\leq d-2$.
\item[(iv)]
$j(B/BeB)\geq 2$.
\end{enumerate}
\end{theorem}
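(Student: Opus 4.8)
The plan is to deduce Theorem~\ref{xxthm2.6} from Theorem~\ref{xxthm2.4} applied in the graded category, so the work consists of checking that the three ingredients of Theorem~\ref{xxthm2.4} that are not among the standing hypotheses --- Hypothesis~\ref{xxhyp2.1}(6), Hypothesis~\ref{xxhyp2.1}(7), and condition \eqref{E2.4.1} --- hold automatically in the present setting.

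First I would observe that since $B$ is noetherian locally finite graded and $e\in B_0$, the algebra $A=eBe$ is again noetherian and locally finite graded (with $A_i=eB_ie$). Hence, by Remark~\ref{xxrem1.5}(2), $\GKdim$ is exact on graded right $A$-modules and on graded right $B$-modules, which is Hypothesis~\ref{xxhyp2.1}(6). Hypothesis~\ref{xxhyp2.1}(7) is the graded instance of the remark following Hypothesis~\ref{xxhyp2.1}: for a f.g. graded right $B$-module $N$, the module $Ne=N\otimes_B Be$ is a graded $\Bbbk$-subspace of $N$, so by \eqref{E1.5.1} we get $\GKdim (Ne)_A\leq \GKdim N_B$. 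Finally, \eqref{E2.4.1} --- that $\GKdim$ satisfies $\gamma_{d-2,1}(eB)$ --- is a special case of Lemma~\ref{xxlem1.6}.

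With these verified, I would run the proof of Theorem~\ref{xxthm2.4} verbatim in the graded category: replace $\qmod_{d-2}$ by $\qgr_{d-2}$, the bifunctors $\Hom_B,\Ext_B,\Tor^A$ by their graded-derived versions $\uHom,\uExt,\uTor$, and Lemma~\ref{xxlem1.3} by its graded version Lemma~\ref{xxlem1.7}. Concretely, (iii)$\Rightarrow$(i) and its converse are the graded form of Lemma~\ref{xxlem2.3} (note $Be$ is projective as a left graded $B$-module because $e\in B_0$, and $\GKdim$ satisfies $\gamma_{d-2,1}(eB)$); (i)$\Rightarrow$(ii) uses the graded identification $\End_{\qgr_{d-2}B}\mathcal B\cong\uHom_B(B,B)=B$ (valid because $\partial$-CM$^{l}(2)$ forces $_BB$ to have no nonzero graded submodule of $\GKdim\le d-2$) together with the graded version of Lemma~\ref{xxlem2.2} coming from the spectral sequence $\uExt^p_B(\uTor^A_q(N,eB),B)\Rightarrow\uExt^{p+q}_A(N,Be)$; (ii)$\Rightarrow$(iv) is the $\uHom_B(-,B)$-argument applied to $0\to BeB\to B\to B/BeB\to 0$; and (iv)$\Rightarrow$(iii) is the graded $\partial$-CM$^{r}(2)$ property, which is part of Hypothesis~\ref{xxhyp2.1}(4).

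I do not expect a genuine obstacle here: the argument is bookkeeping, the only point requiring mild care being that all of the homological input --- the adjunction isomorphisms, the spectral sequence, and the factorization arguments of Lemma~\ref{xxlem2.3} and of step (ii)$\Rightarrow$(iv) --- respects gradings. It does, because every morphism occurring is a map of graded modules and every functor used has a graded-derived refinement, so the graded endomorphism ring statement in (ii) (that $\varphi$ is an isomorphism of \emph{graded} algebras) comes for free.
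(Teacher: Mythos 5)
Your proposal is correct and follows the same route as the paper: the paper likewise notes that in the noetherian locally finite graded setting $\GKdim$ is exact (Remark \ref{xxrem1.5}(2)), satisfies Hypothesis \ref{xxhyp2.1}(7) via \eqref{E1.5.1}, and satisfies $\gamma_{n,i}$ by Lemma \ref{xxlem1.6}, and then obtains Theorem \ref{xxthm2.6} as the graded version of Theorem \ref{xxthm2.4}. Your more detailed bookkeeping of the graded transposition of each step is consistent with, and simply expands, the paper's argument.
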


\section{Hopf algebra actions on graded algebras}
\label{xxsec3}

In this section  we consider locally finite graded algebras and
let $\partial$ be a dimension function on locally finite graded
modules. Starting from Proposition \ref{xxpro3.3}, we further assume
that $\Bbbk$ is a field.

Let $(H,\Delta,\varepsilon, S)$ be a Hopf algebra that is free
of finite rank over $\Bbbk$.
Let $R$ be a noetherian locally finite graded
algebra. Assume that $R$ is a graded left $H$-module algebra, i.e., $H$
acts on $R$ homogeneously. Then the smash product $R\#H$ is a noetherian
locally finite graded algebra.

Suppose that $H$ has a (left and right) integral $\inth$ such that
$\varepsilon(\inth)=1$. If $\Bbbk$ is a field, this is equivalent to
the fact that $H$ is semisimple. Let $e=1\# \inth\in R\#H$. One sees
that $e$ is an idempotent of $R\#H$. Write the fixed subring of the
$H$-action on $R$ as
\begin{equation}
\label{E3.0.1}\tag{E3.0.1}
R^H=\{r\in R\mid h\cdot r=\varepsilon(h)r,\forall\ h\in H\}.
\end{equation}
Then $R^H$ is a graded subalgebra of $R$. The following is well-known \cite{Mon}.

\begin{lemma}
\label{xxlem3.1} With the notation as above. The following hold.
\begin{enumerate}
\item [(1)]
$R^H$ is noetherian.
\item [(2)]
$R$ is f.g. as a left graded $R^H$-module and
as a right graded $R^H$-module.
\item [(3)]
The map $R^H\longrightarrow e(R\#H)e$, sending $r\mapsto e(r\# 1)e$,
is an isomorphism of graded algebras.
\item [(4)]
The map $R\longrightarrow(R\#H)e$, sending $r\mapsto (r\#1)e$, is
an isomorphism of graded $(R,R^H)$-modules.
\item [(5)]
The map $R\longrightarrow e(R\#H)$, sending $r\mapsto e(r\#1)$, is
an isomorphism of graded $(R^H,R)$-modules.
\end{enumerate}
\end{lemma}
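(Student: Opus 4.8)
The plan is to reduce all five statements to standard facts about smash products of semisimple Hopf algebras, most of which are recorded in \cite{Mon}, and to check that the relevant maps respect the $\mathbb{N}$-gradings. First I would treat (1) and (2) together: since $H$ is semisimple (equivalently, has an integral $\inth$ with $\varepsilon(\inth)=1$), the action of $H$ on $R$ makes $R$ a module algebra for which the Reynolds operator $r\mapsto \inth\cdot r$ projects $R$ onto $R^H$; because the action is homogeneous this projection is graded, so $R^H=\bigoplus_i (R^H)_i$ is a graded subalgebra. The standard argument that $R$ is module-finite over $R^H$ on both sides (see \cite[Theorem 4.3.1 and Corollary 4.3.5]{Mon}, or the trace-function argument valid when $H$ is semisimple) is automatically compatible with the grading since one may take homogeneous module generators; noetherianity of $R^H$ then follows from the Artin–Tate/Eakin–Nagata-type argument because $R^H\subseteq R$ with $R$ noetherian and $R$ finite over $R^H$, and again this respects degrees. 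I expect (1)–(2) to be essentially a citation.

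Next I would establish (3), (4), (5), which are the concrete identifications of $e(R\#H)e$, $(R\#H)e$, and $e(R\#H)$. For (4), define $\theta\colon R\to (R\#H)e$ by $r\mapsto (r\#1)e$; this is a graded map of right $R^H$-modules (using (3), once proved, or directly) and left $R$-modules, and one checks it is bijective by exhibiting the inverse. The key computational identity is that, writing $e=1\#\inth$, multiplication $(r\#h)(1\#\inth)=\sum r(h_1\cdot 1)\# h_2\inth = r\# h\inth = \varepsilon(h)\,(r\#\inth)$ — more precisely one uses $h\inth=\varepsilon(h)\inth$ since $\inth$ is a left integral — so every element of $(R\#H)e$ is of the form $(r\#1)e$ for a unique $r\in R$, giving bijectivity; homogeneity is clear because $e\in (R\#H)_0$ and the action is homogeneous. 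Statement (5) is the mirror image using that $\inth$ is also a right integral (valid since $H$ is semisimple, so left and right integrals coincide), via $e(r\#h)\mapsto$ the appropriate element; I would write $\theta'\colon R\to e(R\#H)$, $r\mapsto e(r\#1)$, and check $(1\#\inth)(r\#1)=\sum (\inth_1\cdot r)\#\inth_2$, then verify bijectivity and that it is a map of graded $(R^H,R)$-bimodules. For (3), one combines (4) and (5): $e(R\#H)e\cong \mathrm{End}_{R^H}$-flavoured computations, or more simply note $e(r\#1)e = (\inth\cdot r \# 1)e$-type identities show the image of $R^H$ under $r\mapsto e(r\#1)e$ is all of $e(R\#H)e$ and the map is injective with the obvious inverse; this is \cite[Corollary 4.3.2 or Lemma in \S4.3]{Mon} in the graded setting.

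The main obstacle, such as it is, is bookkeeping with the smash-product multiplication and the (co)integral identities $h\inth=\varepsilon(h)\inth=\inth h$ to pin down the precise forms of the elements of $(R\#H)e$, $e(R\#H)$, and $e(R\#H)e$; none of it is deep, but one must be careful with the convention for the smash product $R\#H$ (right modules, left module algebra) so that the maps in (4) and (5) are on the correct sides. I would double-check that the grading on $R\#H$ places $H$ in degree $0$ (so $e\in (R\#H)_0$) and that $\varepsilon(\inth)=1$ is exactly what makes $e$ idempotent, namely $e^2=(1\#\inth)(1\#\inth)=1\#\inth^2=\varepsilon(\inth)(1\#\inth)=e$. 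Once the degree-$0$ placement of $H$ and the integral identities are in hand, each of (3)–(5) is a one-line verification of a bijection together with the trivial observation that it preserves degrees, so the whole lemma is routine and I would present it compactly with pointers to \cite[\S4.3]{Mon}.
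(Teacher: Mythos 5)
Your proposal is correct and follows essentially the same route as the paper, which handles (1) and (2) by citing the proofs of \cite[Corollary 4.3.5]{Mon} and \cite[Theorem 4.4.2]{Mon} and settles (3)--(5) by exactly the direct computations with $e=1\#\inth$ and the integral identities that you sketch. One small caution: do not attribute the noetherianity of $R^H$ to a bare noncommutative Eakin--Nagata statement (which can fail without extra hypotheses); the argument behind the cited result is the one you already have in hand, namely that the trace map $r\mapsto \inth\cdot r$ is a surjective $R^H$-bimodule retraction of $R$ onto $R^H$, so an ascending chain of one-sided ideals $I_n\subseteq R^H$ is recovered as $I_n=\hat t(I_nR)$ (respectively $\hat t(RI_n)$) from the corresponding chain in the noetherian ring $R$.
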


\begin{proof} (1) This follows by the proof of
\cite[Corollary 4.3.5]{Mon}.

(2) This follows by the proof of
\cite[Theorem 4.4.2]{Mon}.

(3,4,5) By direct computation.
\end{proof}

\begin{hypothesis}
\label{xxhyp3.2}
\begin{enumerate}
\item[(1)]
$R$ is a noetherian, locally finite, graded algebra.
\item[(2)]
$H$ is a Hopf algebra acting on
$R$ homogeneously, and $H$ is free of finite rank over $\Bbbk$ with
integral $\inth$ such that $\epsilon(\inth)=1$.
\item[(3)]
Let $B$ be the algebra $R\# H$ with $e:=1\#\inth\in B$.
Identifying $R^H$ with $eBe$ by Lemma {\rm{\ref{xxlem3.1}(3)}},
$R$ with $Be$ by Lemma {\rm{\ref{xxlem3.1}(5)}}.
\item[(4)]
Let $\partial$ be an exact dimension function on f.g. right graded $B$-modules,
$R$-modules, and $R^H$-modules.
This is automatic if $\partial=\GKdim$.
\item[(5)]
$\partial$ is $B/R$-hereditary. This is automatic
if $\partial=\GKdim$ by Lemma {\rm{\ref{xxlem1.10}(2)}}.
\item[(6)]
$\partial(R)=d\geq 2$ and $R$ is graded $\partial$-CM$(2)$
\end{enumerate}
\end{hypothesis}

Here is our first result in this section.
For the rest of this section we assume that $\Bbbk$ is a field.

\begin{proposition}
\label{xxpro3.3}
Retain Hypothesis {\rm{\ref{xxhyp3.2}(1-5)}}.
If $R$ is $\partial$-CM {\rm{(}}respectively,
$\partial$-CM$(s)$, $\partial$-CM$^l(s)$,
$\partial$-CM$^r(s)${\rm{)}}, then so is
$B:=R\#H$.
\end{proposition}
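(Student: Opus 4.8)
The plan is to transfer the Cohen-Macaulay property from $R$ to $B=R\#H$ by exploiting the module-theoretic relationship between $R$-modules and $B$-modules established in Lemma \ref{xxlem3.1}. The central observation is that $R\cong Be$ as a left $B$-module (equivalently $R\cong eB$ as a right $B$-module), and since $H$ is semisimple with $\varepsilon(\inth)=1$, there is a splitting that makes $B$ a direct summand of a sum of copies of modules built from $R$. Concretely, $B = B\cdot 1 = B(e + (1-e))$, and more usefully, the multiplication $R\otimes H \to B$ together with the $H$-module structure lets one realize $B_B$ (and ${}_BB$) in terms of $R$ as a right $R$-module tensored up, or directly: $B e B$ versus $B$, but for the CM statement the cleanest route is via the following. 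For a finitely generated right $B$-module $M$, one wants to compare $\Ext^i_B(M,B)$ with Ext groups over $R$. Since $R = eB$ is a right $B$-module that is f.g. projective as a left $R^H$-module, and $B e = R$ as left $B$-modules, there are standard change-of-rings spectral sequences relating $\Ext_B$ and $\Ext_R$.

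The key steps, in order: First I would record that $\GKdim$ (or $\partial$) is $B/R$-hereditary by Hypothesis \ref{xxhyp3.2}(5), so $\partial(M_B) = \partial(M_R)$ for every f.g. right $B$-module $M$; this immediately handles the dimension side of every CM-type equation. Second, I would establish the homological comparison $j_B(M) = j_R(M)$ for f.g. right $B$-modules $M$, where $j_R(M)$ means the grade of $M$ restricted to $R$. For this, the essential input is that $B$ is free (or at least projective) as a left and as a right $R$-module: indeed $B = R\#H \cong R\otimes H$ as a left $R$-module and as a right $R$-module (via the antipode), with $H$ free of finite rank over $\Bbbk$. Therefore $\Ext^i_B(M,B)$ and $\Ext^i_R(M,R)$ are related by: $\Ext^i_B(M,B) = \Ext^i_B(M, R\otimes H)$, and using that $R\otimes H$ as a $B$-module is (up to the right $H$-action twisting) an induced/coinduced module, a Frobenius-type reciprocity for the extension $R\subseteq R\#H$ gives $\uExt^i_B(M, B)\cong \uExt^i_R(M,R)\otimes(\text{something of dimension }0)$ or more precisely these two graded vector spaces vanish simultaneously. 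The cleanest formulation: since $H$ is semisimple, $R\#H$ is separable over $R$ in the appropriate sense, so restriction $\mmod B \to \mmod R$ is exact and "reflects" grade; combined with $\partial(R) = \partial(B) = d$ (from $B/R$-heredity applied to $M=B$), one gets $j_B(M) = j_R(M_R)$.

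Then the four variants follow formally: if $R$ is $\partial$-CM, then for f.g. $M_B\neq 0$ we have $j_B(M) + \partial(M_B) = j_R(M_R) + \partial(M_R) = \partial(R) = d = \partial(B)$, which is exactly $\partial$-CM for $B$; the one-sided and $\partial$-CM$(s)$ statements are the corresponding implications between "$\partial(M)\leq d-s$" and "$j(M)\geq s$", each transferred across the identities $\partial(M_B)=\partial(M_R)$ and $j_B(M)=j_R(M_R)$. The main obstacle I anticipate is making the grade-comparison $j_B(M)=j_R(M_R)$ fully rigorous in the graded setting: one must be careful that $B$ is genuinely projective (ideally free) on both sides as an $R$-module with the grading respected, that the antipode-twisted isomorphism $B\cong R\otimes H$ as right $R$-modules is graded, and that the change-of-rings argument (e.g. $\uExt^i_B(M, \uHom_R(B, R))\cong \uExt^i_R(M, R)$ via adjunction, using $B\cong\uHom_R(B,R)$ as $B$-bimodules up to a twist — this last isomorphism is where semisimplicity of $H$ enters, via the Nakayama/Frobenius structure of $H$) goes through. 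Once that bimodule identification $B\cong \uHom_R(B,R)$ (twisted) is in hand, the spectral sequence $\uExt^p_R(M,R)\Rightarrow \uExt^{p+q}_B(M,B)$ degenerates because $B$ is $R$-projective, giving the desired grade equality directly.
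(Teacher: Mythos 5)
Your proposal is correct in substance but takes a genuinely different route from the paper's proof. Both arguments start identically: $\partial(B_B)=\partial(B_R)=\partial(R_R)=d$ and $\partial(M_B)=\partial(M_R)$ by the $B/R$-hereditary hypothesis, so everything reduces to the grade equality $j(M_B)=j(M_R)$. For that equality the paper uses semisimplicity of $H$ twice: first to invoke the invariants isomorphism $\uExt^j_B(M,B)\cong\uExt^j_R(M,B)^H$ as in \eqref{E3.3.1}, then, after an explicit computation with the auxiliary smash product $H\# R$ identifying $\uExt^j_R(M,R\# H)\cong \uExt^j_R(M,R)\otimes H$ as right $H$-modules, a direct argument with the coproduct $\Delta(\inth)=\sum_s y_s\otimes x_s$ (the $y_s$ generate $H$) to see that the invariants are nonzero whenever $\uExt^j_R(M,R)\neq 0$; this gives both inequalities and also the formula \eqref{E3.3.2}, which the paper reuses later (e.g.\ in Section 4 to see that $R\# H$ is AS regular). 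You instead obtain the grade equality from the change-of-rings isomorphism $\uExt^i_B(M,\uHom_R(B,R))\cong\uExt^i_R(M,R)$ (legitimate because $B$ is graded free of finite rank over $R$ on both sides, so $B$-projective resolutions restrict to $R$-projective resolutions) combined with the identification $\uHom_R(B_R,R)\cong B$, i.e.\ from the fact that $R\subseteq R\# H$ is a ($\beta$-)Frobenius extension. That fact is true, but it is the entire content of your argument and you leave it as a black box; it needs a proof or a citation. Note that it holds for every finite-dimensional $H$, with the twist $\beta$ given by the action of the distinguished group-like element of $H$; semisimplicity (via unimodularity) only serves to remove the twist, and since the twist sits on the $R$-side of the bimodule structure, only the right $B$-module isomorphism $\uHom_R(B_R,R)\cong B$ is actually needed, so vanishing of Ext is unaffected in any case. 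Thus your route is more economical and even applies beyond the semisimple case, while the paper's computation buys the explicit $H$-module description \eqref{E3.3.2} used elsewhere.

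One intermediate claim should be corrected: separability of $B$ over $R$ by itself does not let restriction ``reflect'' grade. It only shows that $M$ is a $B$-module direct summand of $M\otimes_R B$, whence $\uExt^i_B(M,B)$ is a direct summand of $\uExt^i_R(M,B)\cong \uExt^i_R(M,R)^{\oplus \dim H}$, giving $j(M_B)\geq j(M_R)$; the reverse inequality genuinely requires the Frobenius/coinduction identification you describe in your final step, so that step cannot be waved away. You should also record that the Frobenius isomorphism is graded (the twist is by a degree-preserving automorphism, since group-like elements act homogeneously), so the argument stays within the graded setting used in Hypothesis \ref{xxhyp3.2}.
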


\begin{proof}
Since $B$ is isomorphic to $R^{\oplus \dim H}$ as a right
$R$-module, we have
$$\partial(B_B)=\partial(B_R)=\partial(R_R)=:d.$$

Let $M$ be a f.g. graded right $B$-module with $\partial$-dimension
$i$. Since $\partial$ is $B/R$-hereditary, $M$, viewed as a
right $R$-module, has $\partial$-dimension $i$, or
$\partial(M_B)=\partial(M_R)$. It remains to
show that $j(M_B)=j(M_R)$.

The cohomology $\uExt_B^j(M,B)$ can be computed by $\uExt^j_R(M,B)$. Since
$H$ is semisimple, by \cite[Corollary 7.6]{Ne}, we have
\begin{equation}\label{E3.3.1}\tag{E3.3.1}
\uExt_B^j(M,B)\cong \uExt_R^j(M,B)^H.
\end{equation}
Equation \eqref{E3.3.1} follows also from a more general result
\cite[Theorem 3.3]{St} together with \cite[Lemma 9.1.9]{We}.

We recall the right $H$-module structure on $\uExt_R^j(M,B)$ as follows.
Let
$$\cdots\longrightarrow P^{-n}\longrightarrow\cdots\longrightarrow
P^0\longrightarrow M_B\longrightarrow 0$$
be a f.g. projective resolution of $M_B$.
Let $N$ be a right $B$-module. For $j\ge0$, $\uHom_\Bbbk(P^{-j},N)$ has
a right $H$-module structure defined by
$$(f\cdot h)(p)=f(p(Sh_{(1)}))h_{(2)}$$
for all $p\in P^{-j}$, $f\in \uHom_\Bbbk(P^{-j},N)$ and $h\in H$,
where $S$ is the antipode of $H$ and we use Sweedler's notation
$\Delta(h)=h_{(1)}\otimes h_{(2)}$. One may check that $\uHom_R(P^{-j},N)$ is an $H$-submodule of $\uHom_\Bbbk(P^{-j},N)$. The right $H$-action is
compatible with the differential of the complex $\uHom_R(P^\bullet,N)$,
from which we obtain the right $H$-action on $\uExt_R^j(M,N)$.

We may form another smash product $H\# R$ as follows \cite[Lemma 2.1]{RRZ}:
as a vector space $H\# R=H\otimes R$, the product is defined by
$$(h\#r)(g\#r')=hg_{(2)}\#(S^{-1}g_{(1)}\cdot r)r'.$$
The algebras $R\#H$ and $H\#R$ are isomorphic through the isomorphism:
$$\varphi:R\#H\longrightarrow H\#R,$$
defined by
$$r\#h\mapsto h_{(2)}\#S^{-1}h_{(1)}\cdot r.$$
For all $j\ge0$, we have the following morphism
$$\Psi:\uHom_R(P^{-j},R\#H)\longrightarrow\uHom_R(P^{-j},H\# R),$$
defined by
$$\Psi(f)=\varphi\circ f.$$
We also may define a morphism
$$\Phi:\uHom_R(P^{-j},R)\otimes H\longrightarrow \uHom_\Bbbk(P^{-j},H\# R),$$
defined by
$$\Phi(f\otimes h)(p)=h\#f(p).$$ A straightforward check shows that $\Phi$ takes value in $\uHom_R(P^{-j},H\# R)$. Hence, we indeed obtain a morphism $\Phi:\uHom_R(P^{-j},R)\otimes H\longrightarrow \uHom_R(P^{-j},H\# R)$.
One may check that both $\Psi$ and $\Phi$ are isomorphisms of
vector spaces and are compatible with the differentials.
Through the isomorphism $\varphi$, we view $H\#R$ as a right $R\#H$-module.
Note that $R$ can be viewed as a right $R\#H$-module via the action
$$r\cdot(r'\#h)=S^{-1}h(rr')$$ for $r,r'\in R$ and $h\in H$. Hence
both  $\uHom_R(P^{-j},H\#R)$ and $\uHom_R(P^{-j},R)$ are
right $H$-modules. Now $\uHom_R(P^{-j},R)\otimes H$ is also a
right $H$-module by the usual $H$-action defined by
$$(f\otimes g)\cdot h=f\cdot h_{(1)}\otimes g h_{(2)}$$
for $f\in \uHom_R(P^{-j},R)$ and $g,h\in H$.
Since $\varphi$ is an isomorphism of algebras,
$\Psi$ is an isomorphism of right $H$-modules.
We next check that $\Phi$ is a right $H$-module
morphism, and hence it is indeed an isomorphism
of right $H$-modules. For $f\in \uHom_R(P^{-j},R)$,
$h,g\in H$ and $p\in P^{-j}$, we have
$$\begin{array}{ccl}
    [\Phi(f\otimes h)\cdot g](p) & = & [\Phi(f\otimes h)(pSg_{(1)})]g_{(2)} \\
     & = & [h\# f(pSg_{(1)})]g_{(2)} \\
     & = & hg_{(3)}\#S^{-1}g_{(2)}f(p(Sg_{(1)})),
  \end{array}
$$
$$
\begin{array}{ccl}
  \Phi((f\otimes h)g)(p) & = & \Phi(f\cdot g_{(1)}\otimes h g_{(2)})(p) \\
   & = & hg_{(2)}\# (f\cdot g_{(1)}) (p) \\
   & = & hg_{(3)}\# f(pSg_{(1)})\cdot g_{(2)}\\
   & = & hg_{(3)}\# S^{-1}g_{(2)}f(pSg_{(1)}).
\end{array}
$$ Hence $\Phi$ is a right $H$-module morphism.
Therefore the composition $\Phi^{-1}\Psi$
induces an isomorphism of complexes of right $H$-modules
$$\uHom_R(P^{\bullet},R\#H)\longrightarrow \uHom_R(P^{\bullet},R)\otimes H.$$
Applying $(-)^H$, we obtain an isomorphism of complexes
$$\uHom_R(P^{\bullet},R\#H)^H\cong (\uHom_R(P^{\bullet},R)\otimes H)^H.$$
Since $(-)^H$ is an exact functor,
taking cohomology $H^j(- )$ and taking the
invariants $(- )^H$ commutes. Therefore we have
$$\uExt_R^j(M,R\#H)^H\cong (\uExt^j_R(M,R)\otimes H)^H.$$
Combining with the isomorphism \eqref{E3.3.1}, we finally obtain
\begin{equation}
\label{E3.3.2}\tag{E3.3.2}
\uExt_B^j(M,B)\cong (\uExt^j_R(M,R)\otimes H)^H.
\end{equation}

By \eqref{E3.3.2}, it is clear that $j(M_B)\geq j(M_R)$.
Assume that $x_1,\dots,x_n$ is a $\Bbbk$-basis of $H$. Then we
may write the coproduct of $\inth$ as $\Delta(\inth)=\sum_{s=1}^ny_s\otimes x_s$. Since the one-dimensional vector space span$_\Bbbk \{\int\}$ is an ideal of $H$,
we see that $y_1,\dots,y_n$ generate $H$ by \cite[Proposition 2.6]{Sw}.
If $\uExt^{j}_R(M,R)\neq0$, there is
a nonzero element $\alpha\in\uExt^{j}_R(M,R)$ such that
$\alpha y_s\neq0$ for some $s$. Now
$\sum_{s=1}^n\alpha y_s\otimes x_s\neq 0$, which is in
$(\uExt^{j}_R(M,R)\otimes H)^H$. Therefore
$\uExt_B^{j}(M,B)\neq0$. This means that $j(M_B)\leq
j(M_R)$. Therefore $j(M_B)=j(M_R)$.
\end{proof}

Combining Lemmas \ref{xxlem2.2}, \ref{xxlem1.6} and \ref{xxlem3.1}
and Proposition \ref{xxpro3.3}, we have the following consequences.

\begin{proposition}
\label{xxpro3.4}
Retain Hypothesis {\rm{\ref{xxhyp3.2}(1,2,3)}} with
$\partial=\GKdim$ and assume that $\Bbbk$ is a field and that
$R$ is CM$(\alpha)$ for
a positive integer $\alpha$. Then
$\uExt_{R^H}^i(N,R)=0$ for all $N\in \GKgr_{d-\alpha}R^H$ and all
$i\leq \alpha-1$.
\end{proposition}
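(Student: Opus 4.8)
The plan is to exhibit the pair $(R^H,\,R\#H)$ as an instance of the situation of Lemma~\ref{xxlem2.2} and to extract the desired vanishing from it. Write $B=R\#H$ and $e=1\#\inth$; then $e$ is an idempotent, and by Lemma~\ref{xxlem3.1}(3,4,5) one has the identifications $eBe\cong R^H$, $Be\cong R$ as graded $(R,R^H)$-bimodules, and $eB\cong R$ as graded $(R^H,R)$-bimodules. Since $\partial=\GKdim$, parts (4) and (5) of Hypothesis~\ref{xxhyp3.2} hold automatically, so the full Hypothesis~\ref{xxhyp3.2}(1-5) is available; in particular $R^H$ and $B$ are noetherian by Lemma~\ref{xxlem3.1}(1). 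I would first dispose of the degenerate case: if $\alpha>d:=\GKdim R$ then $\GKgr_{d-\alpha}R^H$ contains only the zero module and the claim is trivial, so I may assume $\alpha\le d$. Since $R$ is CM$(\alpha)$ it is in particular $\GKdim$-CM$^l(\alpha)$, and hence, by Proposition~\ref{xxpro3.3} applied with $\partial=\GKdim$, $B=R\#H$ is $\GKdim$-CM$^l(\alpha)$ as well.

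Next, I would fix $N\in\GKgr_{d-\alpha}R^H$, so that $\GKdim N\le d-\alpha$, and control the relevant Tor groups. Under the identification $eB\cong R$, the bimodule $eB$ is a graded $(R^H,R)$-bimodule which is finitely generated on the left (Lemma~\ref{xxlem3.1}(2)) and cyclic on the right, so Lemma~\ref{xxlem1.6} applies and gives $\GKdim\uTor_i^{R^H}(N,eB)\le\GKdim N\le d-\alpha$ for every $i\ge0$. This is exactly the numerical input required to invoke Lemma~\ref{xxlem2.2} (in its graded form) with $A=R^H$ and $B=R\#H$: Hypothesis~\ref{xxhyp2.1}(1,2,3) holds ($R^H$ and $R\#H$ noetherian; $e$ an idempotent with $eBe=R^H$; $\GKdim$ an exact dimension function on graded modules over both algebras, with $\GKdim B=d$), and $B$ is $\GKdim$-CM$^l(\alpha)$ by the previous step. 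Lemma~\ref{xxlem2.2} then yields $\uExt^i_{R^H}(N,Be)=0$ for all $i\le\alpha-1$, and since $Be\cong R$ (Lemma~\ref{xxlem3.1}(4)) this is precisely the assertion $\uExt^i_{R^H}(N,R)=0$.

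I do not expect a genuine obstacle here, since the proof is really an assembly of Proposition~\ref{xxpro3.3}, Lemma~\ref{xxlem1.6}, Lemma~\ref{xxlem3.1}, and Lemma~\ref{xxlem2.2}. The points that need a little care are: keeping the two identifications $Be\cong R$ and $eB\cong R$ (and the sides on which $R^H$ acts) carefully apart; noting that Lemma~\ref{xxlem2.2} is being used in the graded setting, where its spectral-sequence argument carries over verbatim and, since $N$ is finitely generated, the difference between the ungraded $\Ext$ there and the graded $\uExt$ in the statement is immaterial; and observing that the clause $d\ge2$ built into Hypothesis~\ref{xxhyp2.1}(3) is never used in the proof of Lemma~\ref{xxlem2.2}, so the argument remains valid even when $\GKdim R<2$.
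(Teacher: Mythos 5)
Your proof is correct and follows essentially the same route as the paper: identify $(A,B)=(R^H,R\#H)$ via Lemma \ref{xxlem3.1}, transfer the CM-type condition to $B$ by Proposition \ref{xxpro3.3}, control $\uTor^{R^H}_i(N,eB)$ by Lemma \ref{xxlem1.6}, and conclude with the spectral-sequence Lemma \ref{xxlem2.2} applied to $Be\cong R$. Your extra remarks (the trivial case $\alpha>d$, the graded versus ungraded $\Ext$, and the irrelevance of the clause $d\geq 2$ in Hypothesis \ref{xxhyp2.1}(3) for Lemma \ref{xxlem2.2}) are accurate refinements of points the paper leaves implicit.
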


\begin{proof} Let $(A,B)=(R^H,R\# H)$. Then Hypothesis
\ref{xxhyp2.1}(1,2,3) can be verified easily. By
Proposition \ref{xxpro3.3}, $B$ is CM$(\alpha)$. The
assertion follows from Lemmas
\ref{xxlem1.6} and \ref{xxlem2.2}.
\end{proof}

Assisted by Proposition \ref{xxpro3.3}, Theorem \ref{xxthm2.6} now
reads in the following form. Recall that the pertinency of $H$-action
on $R$ [Definition \ref{xxdef0.1}] is defined to be
$$\Pty(R,H)=\GKdim R-\GKdim (R\# H)/(e)$$
where $e=1\#\inth$. 

\begin{theorem}
\label{xxthm3.5}
Retain Hypothesis {\rm{\ref{xxhyp3.2}(1-3)}} with $\partial=\GKdim$.
Assume that $R$ is CM$(2)$ with $\GKdim R=d\geq 2$.
Set $(A,B)=(R^H,R\#H)$. Then the following are equivalent.
\begin{enumerate}
\item [(i)]
The functor $-\otimes_{\mathcal{B}}\mathcal{R}:\qgr_{d-2} B
\longrightarrow \qgr_{d-2} A$ is an equivalence.
\item[(ii)]
The natural map
$\varphi:B\longrightarrow\text{\rm End}_A(R)$ is an
isomorphism of algebras.
\item[(iii)]
$\Pty(R,H)\geq 2$.
\item[(iv)]
$H$-action is  homologically small in the sense of
Definition {\rm{\ref{xxdef0.2}(2)}}.
\end{enumerate}
\end{theorem}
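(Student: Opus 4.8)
The plan is to obtain Theorem \ref{xxthm3.5} as the specialization of the graded Morita-type Theorem \ref{xxthm2.6} to the smash-product situation, using Proposition \ref{xxpro3.3} to supply the Cohen--Macaulay hypothesis and Lemma \ref{xxlem3.1} to translate the resulting four equivalent statements into the language of $R^H$, $R\#H$ and pertinency.

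First I would check that the pair $(A,B)=(R^H,\,R\#H)$ together with the idempotent $e=1\#\inth\in B_0$ satisfies Hypothesis \ref{xxhyp2.1}(1--5) in the graded setting with $\partial=\GKdim$. Noetherianity of $B=R\#H$ holds by hypothesis and that of $A=R^H$ is Lemma \ref{xxlem3.1}(1), giving (1); the identification $A=eBe$ is Lemma \ref{xxlem3.1}(3), giving (2). For (3) I would note that $R\#H$ is free of rank $\dim H$ as a right $R$-module, so by $B/R$-heredity of $\GKdim$ (Lemma \ref{xxlem1.10}(2)) one gets $\partial(B)=\GKdim(R\#H)=\GKdim R=d\ge 2$. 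Property (5) follows from Lemma \ref{xxlem3.1}(2,4,5): under the identifications recorded in Hypothesis \ref{xxhyp3.2}(3) one has $Be\cong R\cong eB$ as graded modules, and $R$ is finitely generated on both sides over $R^H$, so $Be$ and $eB$ are f.g.\ right $A$-modules. The crucial input is (4): $B=R\#H$ is graded $\GKdim$-CM$(2)$, which is exactly what Proposition \ref{xxpro3.3} yields from the assumption that $R$ is CM$(2)$. Conditions (6) and (7) of Hypothesis \ref{xxhyp2.1} are automatic for $\GKdim$ on noetherian locally finite graded algebras by Remark \ref{xxrem1.5}(2) and formula \eqref{E1.5.1}.

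With the hypotheses in place, Theorem \ref{xxthm2.6} gives that the following are equivalent: (a) $-\otimes_{\mathcal{B}}\mathcal{B}e:\qgr_{d-2}B\to\qgr_{d-2}A$ is an equivalence; (b) $\varphi\colon B\to\End_A(Be)$ is an isomorphism of graded algebras; (c) $\GKdim B/(BeB)\le d-2$; (d) $j(B/BeB)\ge 2$. It then remains to match (a)--(d) with (i)--(iv). Substituting the isomorphism $Be\cong R$ of graded $(B,A)$-bimodules turns (a) into (i) and (b) into (ii), since $\End_A(Be)\cong\End_{R^H}(R)$ as graded algebras. For (c)$\Leftrightarrow$(iii), observe that $BeB$ is precisely the two-sided ideal $I$ of $R\#H$ generated by $1\#\inth$, so $\Pty(R,H)=\GKdim R-\GKdim (R\#H)/I=d-\GKdim B/(BeB)$ by Definition \ref{xxdef0.1}; hence $\GKdim B/(BeB)\le d-2$ if and only if $\Pty(R,H)\ge 2$. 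Finally (d) is literally the defining condition of homological smallness in Definition \ref{xxdef0.2}(2), so (d)$=$(iv).

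Since the substantive analytic and homological content already resides in Theorem \ref{xxthm2.6} (the Morita argument and the $\partial$-CM bookkeeping) and in Proposition \ref{xxpro3.3} (transfer of Cohen--Macaulayness along $R\mapsto R\#H$ via the invariants/spectral-sequence computation), the remaining work is organizational. The only point demanding care is verifying that the identifications of Lemma \ref{xxlem3.1} are bimodule identifications compatible with all relevant actions --- in particular that the left $R\#H$-structure on $R$ matches the left $B$-structure on $Be$, so that the algebra isomorphism $\varphi\colon B\to\End_A(R)$ of statement (ii) is the ``natural'' one and not merely an abstract isomorphism of algebras. I do not expect a genuine obstacle beyond this bookkeeping.
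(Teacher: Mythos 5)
Your proposal is correct and follows the paper's own route: the paper likewise deduces Theorem \ref{xxthm3.5} directly from Theorem \ref{xxthm2.6}, using Proposition \ref{xxpro3.3} to transfer CM$(2)$ from $R$ to $R\#H$ and Lemma \ref{xxlem3.1} to identify $A=eBe$, $Be\cong R\cong eB$, and $BeB$ with the ideal $(1\#\inth)$, so that (c),(d) of Theorem \ref{xxthm2.6} become (iii),(iv). Your write-up simply makes explicit the hypothesis-checking that the paper leaves as ``matching up notations.''
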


\begin{proof} This is an immediate consequence of Theorem
\ref{xxthm2.6} after matching up
with notations and verifying the hypotheses by
Proposition \ref{xxpro3.3}. Note that (iii) and (iv) are
equivalent by CM$(2)$.
\end{proof}

\begin{remark}
\label{xxrem3.6}
(1) Theorem \ref{xxthm0.3} is a special case of Theorem \ref{xxthm3.5}.

(2) A version of Theorem \ref{xxthm3.5} holds when $R$ is ungraded
(or $R$ is not locally finite) as long as
\begin{enumerate}
\item[(i)]
the GK-dimension on right $R^H$-modules are exact,
\item[(ii)]
the GK-dimension is $R\# H/R^H$-hereditary,
\item[(iii)]
$\gamma_{n,1}(eB)$ holds for $\GKdim$.
\end{enumerate}

(3)
If $R={\mathbb C}[x_1,\dots,x_d]$ is the polynomial algebra, and $G$
a finite small subgroup of $\text{\rm GL}_d({\mathbb C})$, the
Auslander theorem says the natural map
$R\#\Bbbk G\longrightarrow \text{\rm End}_{R^G}(R)$ as in
Theorem \ref{xxthm3.5} is an isomorphism of graded algebras
\cite{Au1,Yo,IT}. By Theorem \ref{xxthm3.5}, $\Pty(R,G)\geq 2$.
See comments in \cite[Section 7]{BHZ}.

(4) If $R/R^H$ is an $H^*$-dense Galois extension in the sense of 
\cite{HVZ2}, then $\Pty(R,H)=d$. In this case, 
\cite[Theorems 2.4 and 3.8]{HVZ2} implies that the statement (ii) 
of Theorem \ref{xxthm3.5} holds. Hence the equivalence of (ii) and 
(iii) may be viewed as a generalization of \cite[Theorem 3.8(ii)]{HVZ2}. 
In particular, if $\Pty(R,H)=\GKdim R=2$, then the equivalence of (i) 
and (ii) in Theorem \ref{xxthm3.5} is indeed a part of \cite[Theorem 3.8]{HVZ2}. 
On the other hand, \cite[Theorem 3.8]{HVZ2} doesn't assume that the algebra $R$ is CM; 
instead, there is an additional assumption on the depth of $R$ as a right $A$-module.

(5) If $G$ is a finite group of graded automorphisms of $R$, and $R$ is 
AS-regular and CM, then Theorem \ref{xxthm3.5} implies \cite[Theorem 3.7]{MU} 
since the ampleness of the group action implies that $\Pty(R,H)=\GKdim R$. 
However, the isomorphism in \cite[Theorem 3.7]{MU} holds for more general 
AS-regular algebras.
\end{remark}

Next we prove Theorem \ref{xxthm0.6}.

\begin{theorem}
\label{xxthm3.7}
Retain Hypothesis {\rm{\ref{xxhyp3.2}(1,2,3)}}.
Let $\alpha$ be an integer no more than $\Pty(R,H)$, and let $d=\GKdim R$.
Then there is an equivalence of categories
$$\qgr_{d-\alpha} R^H\cong \qgr_{d-\alpha} R\# H.$$
\end{theorem}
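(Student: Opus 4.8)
The plan is to deduce the statement from the Morita-type Lemma~\ref{xxlem2.3}, applied in its graded form to the pair $(A,B)=(R^H,\,R\#H)$ with idempotent $e=1\#\inth$ and dimension function $\partial=\GKdim$. Using Lemma~\ref{xxlem3.1} one identifies $A=eBe$ with $R^H$, and $Be$ with $R$ as a graded $(R,R^H)$-bimodule (and $eB$ with $R$ as a graded $(R^H,R)$-bimodule), so that the functor $-\otimes_{\mathcal B}\mathcal Be$ of Lemma~\ref{xxlem2.3} becomes a functor $\qgr_{d-\alpha}(R\#H)\to\qgr_{d-\alpha}(R^H)$. We may assume $\alpha\ge1$; if $\alpha\le0$, then $d-\alpha\ge d\ge\GKdim M$ for every f.g.\ graded module $M$ over $R\#H$ and over $R^H$ (here $\GKdim R^H=\GKdim R=d$ by Lemmas~\ref{xxlem3.1}(2) and~\ref{xxlem1.10}(2)), so both quotient categories are zero and there is nothing to prove.

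The core of the argument is then to verify the hypotheses of Lemma~\ref{xxlem2.3} for the positive integer $\alpha$. Hypothesis~\ref{xxhyp2.1}(1) holds because $R\#H$ is noetherian (as $R$ is) and $R^H$ is noetherian (Lemma~\ref{xxlem3.1}(1)); (2) is the identification $A=eBe$; for (3), $\partial=\GKdim$ is defined on all module categories in sight and $\partial(B)=\GKdim R=:d$ since $B\cong R^{\oplus\dim_\Bbbk H}$ as a right $R$-module (cf.\ the proof of Proposition~\ref{xxpro3.3}) and $\GKdim$ is $B/R$-hereditary, while $d\ge 2$ is not needed by the remark preceding Lemma~\ref{xxlem2.3}; (5) holds since $Be$ and $eB$ are both $\cong R$, which is f.g.\ over $R^H$ by Lemma~\ref{xxlem3.1}(2,4,5); (6) is exactness of $\GKdim$ on noetherian locally finite graded modules, Remark~\ref{xxrem1.5}(2); and (7) is automatic for $\GKdim$ in the locally finite graded case by~\eqref{E1.5.1}. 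The hypothesis~\eqref{E2.3.1} that $\GKdim$ satisfies $\gamma_{d-\alpha,1}(eB)$ is a special case of Lemma~\ref{xxlem1.6}. Finally, the remaining hypothesis $\partial(B/BeB)\le d-\alpha$ is precisely the assumption $\Pty(R,H)\ge\alpha$, because $BeB$ is the two-sided ideal $(1\#\inth)$ and $\Pty(R,H)=\GKdim R-\GKdim\bigl((R\#H)/(1\#\inth)\bigr)$.

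With all hypotheses in place, the graded counterpart of Lemma~\ref{xxlem2.3} --- whose proof is the verbatim graded translation, with Lemma~\ref{xxlem1.7} replacing Lemma~\ref{xxlem1.3}, Remark~\ref{xxrem1.5}(2) supplying exactness and Lemma~\ref{xxlem1.6} supplying the $\gamma_{n,i}$-property --- yields that $-\otimes_{\mathcal B}\mathcal Be\colon\qgr_{d-\alpha}(R\#H)\to\qgr_{d-\alpha}(R^H)$ is an equivalence. This gives the asserted equivalence $\qgr_{d-\alpha} R^H\cong\qgr_{d-\alpha} R\# H$.

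I do not expect a genuine obstacle here. The one point worth emphasizing is that, unlike Theorem~\ref{xxthm3.5}, this result needs no Cohen--Macaulay hypothesis on $R$: Lemma~\ref{xxlem2.3} does not invoke Hypothesis~\ref{xxhyp2.1}(4), which is exactly why retaining only Hypothesis~\ref{xxhyp3.2}(1,2,3) suffices. The remaining effort is bookkeeping: confirming the graded $\GKdim$-properties recorded in Section~\ref{xxsec1} and rewriting ``$\Pty(R,H)\ge\alpha$'' as the dimension bound $\GKdim(B/BeB)\le d-\alpha$.
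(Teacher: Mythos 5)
Your proposal is correct and follows essentially the same route as the paper: the paper's proof of Theorem \ref{xxthm3.7} likewise applies Lemma \ref{xxlem2.3} to $(A,B)=(R^H,R\#H)$ with $e=1\#\inth$ and $\partial=\GKdim$, citing Lemma \ref{xxlem1.6} for hypothesis \eqref{E2.3.1} and checking Hypothesis \ref{xxhyp2.1}(1--3),(5--7) one by one, with $\Pty(R,H)\geq\alpha$ supplying $\GKdim(B/BeB)\leq d-\alpha$. Your additional bookkeeping (the trivial case $\alpha\leq 0$, the irrelevance of $d\geq 2$ and of the CM hypothesis) is accurate and consistent with the remarks preceding Lemma \ref{xxlem2.3}.
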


\begin{proof} By Lemma \ref{xxlem1.6}, hypothesis
\eqref{E2.3.1} holds for $\partial=\GKdim$ on graded modules.
Hypotheses \ref{xxhyp2.1}(1-3)(5-7) can be checked one by one.
Therefore the assertion follows by Lemma \ref{xxlem2.3}.
\end{proof}

\begin{corollary}
\label{xxcor3.8}
Retain Hypothesis {\rm{\ref{xxhyp3.2}(1,2,3)}}.
Let $R$ be an AS regular algebra.
If $\Pty(R,H)=\GKdim R$, then $R^H$ has graded
isolated singularities.
\end{corollary}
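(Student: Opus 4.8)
The plan is to invoke Theorem \ref{xxthm3.7} with the extremal value $\alpha = \GKdim R = d$, so that the category $\gr_{d-\alpha} R^H$ reduces to $\gr_0 R^H$, and correspondingly $\qgr_{d-\alpha} R^H = \qgr_0 R^H = \qgr R^H = \operatorname{tails} R^H$. First I would check that the hypotheses of Theorem \ref{xxthm3.7} are in force: Hypothesis \ref{xxhyp3.2}(1,2,3) is assumed, $\GKdim R < \infty$ holds because $R$ is AS regular (condition (e) in Definition \ref{xxdef1.8}), and the assumption $\Pty(R,H) = \GKdim R = d$ lets us take $\alpha = d$, which is a legitimate integer $\leq \Pty(R,H)$. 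Applying Theorem \ref{xxthm3.7} then yields the equivalence of categories
$$\qgr_0 R^H \cong \qgr_0 R\# H,$$
that is, $\operatorname{tails} R^H \cong \qgr R\# H$.

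The next step is to show that $\qgr R\# H = \qgr_0 R\# H$ has finite global dimension. Here the AS regularity of $R$ enters: since $R$ is AS regular it has finite global dimension (Definition \ref{xxdef1.8}(d)), and as $R\# H$ is free of finite rank over $R$ on both sides (Lemma \ref{xxlem3.1}(2) together with the identification $B \cong R^{\oplus \dim H}$ as an $R$-module used in the proof of Proposition \ref{xxpro3.3}), the smash product $R\# H$ also has finite global dimension. Passing to the quotient category $\qgr R\# H$ by a Serre subcategory of "finite-dimensional" objects only decreases homological dimension, so $\qgr R\# H$ has finite global dimension. Combining this with the equivalence of the first paragraph, $\operatorname{tails} R^H$ has finite global dimension, which is precisely the condition in \cite[Definition 2.2]{Ue} for $R^H$ to be a graded isolated singularity.

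The only genuinely delicate point is the finiteness of the global dimension of the quotient category $\qgr R\# H$ and its transport along the equivalence; one must be slightly careful that the equivalence $\qgr_0 R^H \cong \qgr_0 R\# H$ of Theorem \ref{xxthm3.7} is an equivalence of the abelian categories in question (not merely of some triangulated shadow), so that "finite global dimension" is a property shared by both sides. Since Theorem \ref{xxthm3.7} is stated as an equivalence of categories induced by the exact functor $-\otimes_{\mathcal B}\mathcal R$, this is immediate. I would also note in passing that one needs $R^H$ to be noetherian and connected graded for the definition of graded isolated singularity to apply, both of which follow from Lemma \ref{xxlem3.1}(1) and the fact that $R^H$ is a graded subalgebra of the connected graded algebra $R$ with $(R^H)_0 = \Bbbk$.
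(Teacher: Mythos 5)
Your overall route is exactly the paper's: apply Theorem \ref{xxthm3.7} with $\alpha=d=\GKdim R$ to get $\qgr_0 R^H\cong \qgr_0 R\# H$, observe that the right-hand side has finite global dimension because $R\# H$ does, and invoke \cite[Definition 2.2]{Ue}. So the structure is fine and matches the paper's two-line proof.

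The one step whose stated justification would fail is your claim that $R\#H$ has finite global dimension \emph{because} it is free of finite rank over $R$ on both sides. Freeness of finite rank over a subring of finite global dimension does not bound the global dimension of the larger ring: for a finite group $G$ with $\ch \Bbbk$ dividing $|G|$, the algebra $\Bbbk G$ is free of finite rank over $\Bbbk$ (global dimension $0$) yet has infinite global dimension. What you actually need is the semisimplicity of $H$, which is built into Hypothesis \ref{xxhyp3.2}(2) (the integral $\inth$ with $\varepsilon(\inth)=1$, over a field): by a Maschke-type argument the extension $R\subseteq R\#H$ is separable (the integral furnishes a bimodule splitting of the multiplication map $R\#H\otimes_R R\#H\to R\#H$), so every $R\#H$-module whose restriction to $R$ has finite projective dimension has finite projective dimension over $R\#H$, giving $\gldim(R\#H)\leq \gldim R<\infty$; alternatively, in the Koszul case the paper deduces this from \cite[Theorem 3.2]{HVZ1}. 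With that repair, the remaining points you raise (finite global dimension passes to $\qgr_0 R\#H$ since $\Ext$-groups there are direct limits of $\Ext$-groups in $\gr R\#H$, the equivalence of Theorem \ref{xxthm3.7} is an equivalence of abelian categories, and $R^H$ is noetherian by Lemma \ref{xxlem3.1}(1)) are correct and in line with what the paper leaves implicit.
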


\begin{proof} Assume $d=\GKdim R$. By Theorem \ref{xxthm3.7},
$\qgr_{d-\alpha} R^H\cong \qgr_{d-\alpha} R\# H$
where the latter has finite global
dimension. Taking $\alpha=d$, the assertion follows by the
definition, see \cite[Definition 2.2]{Ue}.
\end{proof}

For the rest of this section we make some observations the condition
to ensure that $\Pty(R,H)\geq 1$. We recall a definition below.

\begin{definition}
\label{xxdef3.9}
A left $H$-module $M$ is called
{\it inner faithful} if $IM\neq 0$ for every nonzero Hopf ideal $I$
of $H$. We say that an $H$-action on $A$ is inner faithful, if $A$
is an inner faithful left $H$-module.
\end{definition}

\begin{lemma}
\label{xxlem3.10}
Retain Hypothesis {\rm{\ref{xxhyp3.2}(1-5)}} with $\partial=\GKdim$.
Assume that $\Bbbk$ is a field.
Suppose that $R$ is a domain and $(A, B)=(R^H, R\# H)$.
\begin{enumerate}
\item[(1)]
\cite[Theorem 0.5(i)]{SkV}
$B$ is semiprime.
\item[(2)]
\cite[Corollary 3.4]{BCF}
$B$ is prime if and only if the canonical algebra homomorphism
$\phi: B\to \End_{R^H}(R)$ is injective.
\item[(3)]
If the map $\phi$ in part {\rm{(2)}} is injective, then the
$H$-action on $R$ is inner faithful. As a consequence,
if $B$ is prime, then the $H$-action is inner faithful.
\item[(4)]
$B$ is prime if and only if $\Pty(R,H)\geq 1$.
\end{enumerate}
\end{lemma}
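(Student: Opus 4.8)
The plan is to combine the already-cited parts (1) and (2) of Lemma~\ref{xxlem3.10} with the fact that, because $R$ is a domain, $B:=R\#H$ is a torsion-free finitely generated $R$-module on both sides. Write $e=1\#\inth$ and recall from Definition~\ref{xxdef0.1} that $\Pty(R,H)=\GKdim R-\GKdim(B/BeB)$; since $\GKdim B=\GKdim B_R=\GKdim R=:d$ (as $B\cong R^{\oplus\dim H}$ as a right $R$-module, see the proof of Proposition~\ref{xxpro3.3}), the assertion to prove becomes: $B$ is prime if and only if $\GKdim(B/BeB)\le d-1$.

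First I would reformulate primeness in terms of the ideal $BeB$. By Lemma~\ref{xxlem3.10}(2), $B$ is prime exactly when the canonical map $\phi\colon B\to\End_{R^H}(R)$ is injective; identifying $R$ with $Be$ via Lemma~\ref{xxlem3.1}(4), $\phi$ is left multiplication on $Be$, so $\ker\phi=\{b\in B:bBe=0\}$, which one checks equals $\operatorname{l.ann}_B(BeB)$ and is a two-sided ideal of $B$. Since $B$ is semiprime by Lemma~\ref{xxlem3.10}(1), this equals $\operatorname{ann}_B(BeB)$, and this annihilator is zero precisely when $BeB$ is an essential right ideal of $B$. Thus it remains to prove the equivalence: $BeB$ is essential as a right ideal $\iff \GKdim(B/BeB)\le d-1$.

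Both halves of this equivalence exploit that $R$ is a domain (so $B_R\cong R^{\oplus\dim H}$ is torsion-free). For $(\Leftarrow)$: if $BeB$ were not essential there would be a nonzero right ideal $T$ with $T\cap BeB=0$, hence an embedding $T\hookrightarrow B/BeB$ of right $B$-modules; but $T$, being a nonzero right $R$-submodule of the torsion-free module $B_R$, has positive $R$-rank, so $\GKdim T_R=d$, and $\GKdim T_B=d$ by Lemma~\ref{xxlem1.10}(2), forcing $\GKdim(B/BeB)\ge d$, a contradiction. For $(\Rightarrow)$: $B$ is semiprime noetherian, hence Goldie, so an essential right ideal $BeB$ contains a regular element, and (graded Goldie theory) a homogeneous regular element $c$ of positive degree; then $cB\subseteq BeB$ gives a surjection $B/cB\twoheadrightarrow B/BeB$, while $cB\cong B_R$ as right $R$-modules (via $b\mapsto cb$) is a free $R$-submodule of full rank in $B_R$, so $B/cB$ is $R$-torsion and therefore $\GKdim(B/cB)\le d-1$; hence $\GKdim(B/BeB)\le d-1$.

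The step I expect to be the main obstacle is the last one: extracting a homogeneous regular element from the graded essential ideal $BeB$, and then concluding that the finitely generated $R$-torsion module $B/cB$ has GK-dimension at most $d-1$ — cleanest via the Hilbert-series identity $H_{B/cB}(t)=(1-t^{\deg c})H_B(t)$ when $R$ is connected graded, and in general requiring finite partitivity of $\GKdim$ together with integrality of the relevant GK-dimensions. As a cross-check I would run the localization argument: $R\setminus\{0\}$ is an Ore set of regular elements of $B$, so $\hat B:=B\otimes_R\Frac(R)$ is a finite module over the division ring $\Frac(R)$, hence artinian, and it is semiprime by Lemma~\ref{xxlem3.10}(1) (in fact $\hat B=Q(B)$, the Goldie quotient ring); then $B$ is prime $\iff \hat B$ is simple artinian $\iff$ the idempotent $\hat e=e\otimes 1$ is full in $\hat B$ (using that $\hat e\hat B\hat e\cong\Frac(R^H)$ is a division ring, since $R^H=eBe$ is a noetherian domain), and $\hat e$ being full is equivalent to $(B/BeB)\otimes_R\Frac(R)=0$, i.e.\ to $B/BeB$ being $R$-torsion, i.e.\ to $\GKdim(B/BeB)\le d-1$.
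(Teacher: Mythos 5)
Your proposal only engages part (4) (taking (1) and (2) as cited, which is how the paper treats them too), and there it is correct in outline but follows a genuinely different route from the paper's. For the substantive implication $\Pty(R,H)\ge 1\Rightarrow B$ prime, the paper applies Lemma \ref{xxlem2.3} to get $\qmod_{d-1}B\cong\qmod_{d-1}R^H$, identifies the latter with modules over the quotient division ring of the Ore domain $R^H$, and uses $d$-purity of $B$ to conclude that the Goldie quotient ring $Q(B)$ is simple. Your replacement---reduce primeness to injectivity of $\phi$ via part (2), identify $\ker\phi$ with the annihilator of $BeB$, hence (by semiprimeness from part (1)) with essentiality of $BeB$ as a right ideal, and kill any complement $T$ because a nonzero right ideal of $B$ contains a copy of $R_R$ (torsion-freeness of $B_R\cong R^{\oplus\dim H}$ over the domain $R$), so has GK-dimension $d$---is more elementary: it avoids quotient categories, purity and Goldie quotient rings altogether, at the price of relying on part (2), which the paper's argument does not use. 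One small repair: your $T$ need not be graded, so you should invoke Lemma \ref{xxlem1.10}(1) (weak hereditariness, $\GKdim T_R\le\GKdim T_B$) rather than Lemma \ref{xxlem1.10}(2).

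For the converse ($B$ prime $\Rightarrow\Pty\ge1$) the paper offers only the one-line assertion that a nonzero ideal in a prime noetherian ring forces $\GKdim B/BeB\le d-1$, so your sketch is no less rigorous than the paper; still, the step you single out as the obstacle---extracting a \emph{homogeneous} regular element of $B$ from $BeB$ via graded Goldie theory---can be bypassed. Take any regular $c\in BeB$ (ordinary Goldie); since $cB\cong B_R$ is free of full rank and localization at $R\setminus\{0\}$ inside $R$ is exact, $B/cB$ and hence the \emph{graded} module $B/BeB$ is $R$-torsion; then the $R$-annihilator of each homogeneous element of $B/BeB$ is a nonzero graded right ideal of the domain $R$, so contains a nonzero homogeneous element $r$, automatically regular in $R$, and the count $\dim(R/rR)_k=\dim R_k-\dim R_{k-\deg r}$ together with the growth bound $\dim R_k\le ak^{d-1}$ (the Stephenson--Zhang type estimate the paper itself uses in Lemma \ref{xxlem4.8}) gives $\GKdim(B/BeB)_R\le d-1$, transferred to $B$-modules by Lemma \ref{xxlem1.10}(2). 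Two further caveats: your ``cross-check'' asserts that $R\setminus\{0\}$ is an Ore set of regular elements in $B$, which is not obvious (elements of $R$ are not normal in $B=R\#H$) and would itself need justification, though nothing in your main line depends on it; and part (3) of the lemma is not addressed at all---it is easy (a nonzero Hopf ideal $J$ with $JR=0$ gives $1\#j\in\ker\phi$), but it is part of the statement.
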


\begin{proof} (1) This is a special case of \cite[Theorem 0.5(i)]{SkV}.

(2) In order to apply \cite[Corollary 3.4]{BCF} we need to verify 
the hypothesis in \cite[Corollary 3.4]{BCF}, namely, the condition 
in \cite[Lemma 3.3(2)]{BCF}. Since $R$ is a domain, 
so is $A:=R^H$. Hence $A$ is prime. Let $I$ be a nonzero one-sided 
ideal of $R$ which is stable under the $H$-action. Since $R$ is a 
domain, $\GKdim R/I<\GKdim R$. If $A\cap I=0$, then 
$$\GKdim R/I=\GKdim A=\GKdim R,$$ 
a contradiction. Hence $A\cap I\neq 0$.
So, Condition \cite[Lemma 3.3(2)]{BCF} is satisfied. The assertion 
now follows from \cite[Corollary 3.4]{BCF}. 

(3) If the $H$-action on $R$ is not inner faithful, there is
a Hopf ideal $0\neq J\subseteq H$ such that $J R=0$. Thus
$\phi(1\# j)=0$ for all $j\in J$. The assertion follows. The consequence
follows from part (2).

(4) Let $d=\GKdim R$. First we suppose that $\Pty(R,H)\geq 1$.
By part (1), $B$ is semiprime. Since $B$ is a free module
over $R$ and $R$ is a domain, $B$ is $d$-pure
for $R$-modules, and hence for $B$-modules with respect to the GK-dimension
in the sense of \cite[Definition 0.3]{ASZ2}. By Lemma \ref{xxlem1.6},
Hypothesis \eqref{E2.3.1} holds for any $\alpha$, and hence for $\alpha=1$.
By Lemma \ref{xxlem2.3}, $\qmod_{d-1} B\cong \qmod_{d-1} A$. Since $A$
is an Ore domain, $\qmod_{d-1} A$ is equivalent to a module category over
the quotient division ring $Q(A)$ of $A$. Therefore, $\qmod_{d-1} B$ is
equivalent to a module category over a division ring.
Since $B$ is $\GKdim$-pure, $\qmod_{d-1} B\cong {\text{mod}}\; Q(B)$ where $Q(B)$ is
the semisimple artinian quotient ring of $B$. This implies that $Q(B)$ is
simple and $B$ is prime.

Conversely, since $B$ is prime and $I=BeB\subseteq B$ is a nonzero ideal, 
$\GKdim B/I\leq d-1$. Hence $\Pty(R,H)\ge1$.
\end{proof}

Lemma \ref{xxlem3.10} were suggested by
Ellen Kirkman. The authors thank her for sharing her ideas.

\section{BGG correspondence}
\label{xxsec4}

From now on let $\Bbbk$ be a field.
In this section, we study the quotient category $\qgr_{d-\alpha}R^H$
as appeared in the last section. If $R$ is noetherian, connected graded,
AS regular and if the homological determinant of the $H$-action on $R$
is trivial, then $R^H$ is AS Gorenstein \cite[Theorem 0.1]{KKZ1}. Under 
the setting of Theorem \ref{xxthm3.5} (assuming $\Pty(R,H)\geq 2$), 
the quotient category $\qgr_{d-2}R^H$ is equivalent to the quotient 
category $\qgr_{d-2}R\#H$. Since $R\# H$ has finite global dimension,
while $R^H$ has infinite global dimension when $H$ is non-trivial 
\cite[Theorem 0.6]{CKWZ1}, it is relatively easier to understand 
homological properties of the category $\qgr_{d-2}R\#H$ than to do the
category $\qgr_{d-2}R^H$ directly. 

We are now working over Koszul AS regular algebras. Let
$B=\bigoplus_{n\ge0}B_n$ be a noetherian locally finite graded
algebra such that $B_0$ is a semisimple algebra. Recall that a
right graded $B$-module is called a {\it Koszul module} \cite{P,BGS}
if $M$ has a linear graded projective resolution:
$$\cdots \longrightarrow P^{-n}\longrightarrow\cdots\longrightarrow
P^{-1}\longrightarrow P^0\longrightarrow M\longrightarrow0,$$
where $P^{-n}$ are graded projective modules generated in degree $n$
for all $n\ge0$. If the right graded $B$-module $B_0:=B/B_{\geq 1}$
is a Koszul module, then $B$ is called a {\it Koszul algebra}.

Let $B$ be a graded algebra and let
\begin{equation}
\label{E4.0.1}\tag{E4.0.1}
E=E(B):=\oplus_{n\ge0}\uExt^n_B(B_0,B_0)
\end{equation}
be the Yoneda algebra of $B$.
By \cite[Proposition 2.9.1 and Theorem 2.10.1]{BGS}, if $B$ is a
Koszul algebra, then the Yoneda algebra $E$ is also Koszul.

Let $B$ be a locally finite graded algebra.
Let $\{S_1,\cdots, S_w\}$ be the complete
set of non-isomorphic graded right $B$-module concentrated in
degree 0, and let $Q_i$ be the projective
cover of $S_i$, for $i=1,\cdots, w$. Define
\begin{equation}
\label{E4.0.2}\tag{E4.0.2}
B'=\uHom_{B} (\bigoplus_{i=1}^w Q_i,\bigoplus_{i=1}^w Q_i).
\end{equation}
In applications we also assume that the degree zero piece $B_0$
of $B$ is semisimple. The following lemma is well-known.

\begin{lemma}
\label{xxlem4.1} Retain the notations as above and assume that
$B_0$ is semisimple.
\begin{enumerate}
\item[(1)]
$B'$ is graded Morita equivalent to $B$. Namely,
there is an equivalence of graded module categories
$$\Gr B'\cong \Gr B,$$
which induces an equivalence of triangulated categories
$$D(\Gr B')\cong D(\Gr B).$$
\item[(2)]
$E(B)$ is graded Morita equivalent to $E(B')$.
\item[(3)]
$B$ is noetherian {\rm{(}}respectively, AS Gorenstein, AS regular,
Koszul{\rm{)}} if and only if so is $B'$.
\item[(4)]
If $\Bbbk$ is algebraically closed, then $B'_0$ is a
finite direct sum of $\Bbbk$.
\end{enumerate}
\end{lemma}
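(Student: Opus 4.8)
The plan is to realize $B'$ as the ring of graded endomorphisms of a progenerator of $\Gr B$, so that part (1) becomes the graded Morita theorem and parts (2)--(4) follow by transporting standard invariants along the resulting equivalence. First I would verify that $P:=\bigoplus_{i=1}^{w}Q_i$ is a graded projective generator of $\Gr B$. Since $B_0$ is semisimple, $\operatorname{rad}B=B_{\ge 1}$, so any graded projective $Q$ generated in degree $0$ has $\operatorname{rad}Q=Q_{\ge 1}$ and hence $Q/\operatorname{rad}Q=Q_0$, a semisimple $B_0$-module concentrated in degree $0$; therefore $Q$ is a finite direct sum of the $Q_i$ (indecomposable exactly when $Q_0$ is simple). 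Applying this to $B_B$, generated in degree $0$ by $1$, gives $B\cong\bigoplus_i Q_i^{\oplus m_i}$ with all $m_i\ge 1$, where $B_0\cong\bigoplus_i S_i^{\oplus m_i}$ as a $B_0$-module. Thus $P$ is a direct summand of $B$ while $B$ is a direct summand of $P^{\oplus N}$ for $N=\max_i m_i$, so $P$ is a progenerator. The graded Morita theorem then yields an equivalence $F:=\uHom_B(P,-)\colon\Gr B\xrightarrow{\sim}\Gr B'$ with $B'=\uHom_B(P,P)$, proving (1); an equivalence of abelian categories induces one on derived categories. Here $F$ is exact, sends the projective cover $Q_i$ of $S_i$ to the projective cover $Q_i':=\uHom_B(P,Q_i)$ of a graded simple $B'$-module $S_i'$, and carries $B_0$ to $F(B_0)\cong\bigoplus_i(S_i')^{\oplus m_i}$, a semisimple $B'$-module with the same simple constituents as $B_0'\cong\bigoplus_i S_i'$ (so $B'$ is basic).

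For (2), since $F$ transports $\uExt$-groups and these are additive in each variable, both $E(B)$ and $E(B')$ are graded Morita equivalent to the reduced Ext algebra $\bigoplus_n\uExt^n_B(\bigoplus_iS_i,\bigoplus_iS_i)$; hence $E(B)$ and $E(B')$ are graded Morita equivalent to each other (equivalently, one may invoke the standard fact that the Yoneda algebras of graded Morita equivalent algebras are graded Morita equivalent). For (3), noetherianness, finite injective dimension, finite global dimension and finite GK-dimension are all preserved by graded Morita equivalence. For the remaining Artin--Schelter conditions, $F$ carries the complex $\uExt^\bullet_B(B_0,B)$ to $\uExt^\bullet_{B'}(F(B_0),B')$; because $F(B_0)$ and $B_0'$ have the same simple constituents and these conditions --- vanishing off the top degree $d$, and $\uExt^d\cong(\text{same module})(l)$ --- depend only on those constituents and their $\uExt$'s into the ring, they transfer between $B$ and $B'$ in both directions, so $B$ is AS Gorenstein (resp.\ AS regular) iff $B'$ is. Finally, $F$ takes a graded projective generated in degree $n$ to one generated in degree $n$ (as $F(Q_i(-n))=Q_i'(-n)$), so a linear graded projective resolution of $B_0$ over $B$ maps to a linear one of $F(B_0)$ over $B'$; each simple summand of $F(B_0)$ then has a linear resolution, hence so does $B_0'$, i.e.\ $B'$ is Koszul, and the converse is symmetric.

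For (4), the degree-zero part is $B_0'=\Hom_{\Gr B}(P,P)=\bigoplus_{i,j}\Hom_{\Gr B}(Q_i,Q_j)$. A degree-preserving map $Q_i\to Q_j$ restricts to a $B_0$-module map $(Q_i)_0=S_i\to(Q_j)_0=S_j$, and since $(Q_i)_0$ generates $Q_i$ this restriction determines the map; for $i\ne j$ it is a map between non-isomorphic simple $B_0$-modules, hence zero, forcing $f=0$, while for $i=j$ the assignment $f\mapsto f|_{(Q_i)_0}$ is an isomorphism $\Hom_{\Gr B}(Q_i,Q_i)\xrightarrow{\sim}\End_B(S_i)=:D_i$ (surjective by lifting $Q_i\twoheadrightarrow S_i\xrightarrow{\phi}S_i$ along the projective cover $Q_i\twoheadrightarrow S_i$, injective since $(Q_i)_0$ generates $Q_i$). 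Hence $B_0'\cong\bigoplus_{i=1}^w D_i$. Each $D_i$ is a division ring finite-dimensional over $\Bbbk$, since $S_i$ is a finite-dimensional simple $B$-module; so when $\Bbbk$ is algebraically closed $D_i=\Bbbk$ for all $i$ and $B_0'\cong\Bbbk^{\oplus w}$.

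The step needing genuine care, as opposed to routine transport along the equivalence of part (1), is the AS Gorenstein/regular assertion in (3): one must check that the multiplicity mismatch between $F(B_0)$ and the basic module $B_0'$ does no harm, i.e.\ that the vanishing and rank-one conditions in Definition \ref{xxdef1.8} are genuinely properties of the set of simple modules together with their $\uExt$'s into the algebra, and not of the particular finite (co)generator chosen. Everything else is bookkeeping around the graded Morita equivalence set up in part (1).
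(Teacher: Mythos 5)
The paper states this lemma without proof (it is quoted as ``well-known''), so there is no argument of the authors' to compare against; your route --- showing $P:=\bigoplus_{i=1}^{w}Q_i$ is a graded progenerator, identifying $B'=\uHom_B(P,P)$, and transporting everything along the graded Morita equivalence $\uHom_B(P,-)$ --- is precisely the standard argument such a citation presupposes. Parts (1), (2) and (4) are correct and carefully done, and so are the noetherian, injective/global dimension, GK-dimension and Koszul assertions in (3).

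The one step that does not hold up as written is the transfer of the Artin--Schelter conditions in (3), exactly the step you flag. Your justification is that the conditions of Definition \ref{xxdef1.8} ``depend only on the set of simple modules together with their $\uExt$'s into the algebra''. That is true for the vanishing condition (b), but not for (c): the isomorphism $\uExt^d_B(B_0,B)\cong B_0(l)$ remembers the multiplicities $m_i$ with which each simple occurs in $B_0\cong\bigoplus_i S_i^{\oplus m_i}$, whereas $B'_0\cong\bigoplus_i S'_i$ has all multiplicities equal to one. Concretely, if $\uExt^d_B(S_i,B)\cong T_{\pi(i)}(l)$ for a permutation $\pi$ of the simples (with $T_j$ the graded simple left modules), then condition (c) for $B$, read as an isomorphism of one-sided modules, amounts to $\bigoplus_i T_{\pi(i)}^{\oplus m_i}\cong \bigoplus_i T_i^{\oplus m_i}$, i.e.\ to $\pi$ preserving the multiplicities $m_i$; for the basic algebra $B'$ the corresponding condition holds for any $\pi$. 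So ``$B'$ AS Gorenstein $\Rightarrow$ $B$ AS Gorenstein'' cannot be deduced from knowledge of the simples and their Ext groups alone, and your one-line reduction is not a proof. The clean repair is to use the formulation of \cite{MM} that the paper invokes right after Definition \ref{xxdef1.8}, where the isomorphism is $\uExt^d_A(A_0,A)\cong A_0^*(l)$ as $A_0$-bimodules (equal to $A_0(l)$ when $A_0$ is semisimple): the Morita context also gives an equivalence of bimodule categories sending $B\mapsto B'$ and $B_0\mapsto B'_0$ (hence $B_0^*\mapsto (B'_0)^*$), compatible with $\uExt^\bullet(-,B)$, so condition (c) in its bimodule form, together with (a) and (b), passes back and forth between $B$ and $B'$; equivalently, one can keep the one-sided form but must then check that the duality $S\mapsto \uExt^d_B(S,B)$ is a twisted $\Bbbk$-dual on simples and therefore preserves the multiplicities. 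With that replacement your proof is complete.
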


For the rest of this section, we assume that {\it $\Bbbk$ is
algebraic closed}. Then $B'_0$ is a
finite direct sum of $\Bbbk$ [Lemma \ref{xxlem4.1}(4)].
In this case $B'$ is called the {\it basic algebra} of $B$.
Assume that $R$ is as in Theorem \ref{xxthm3.5} and that $R$ is a
connected graded Koszul AS regular algebra. In view of \cite[Theorem 3.2]{HVZ1}, $R\#H$ is a Koszul algebra with finite dimensional Koszul dual. Hence $R\#H$ has finite global dimension. By \eqref{E3.3.2} and its dual version, it follows that $R\#H$ is an AS-regular algebra.
Since the graded Morita equivalence of algebras preserves almost
all algebraic invariants [Lemma \ref{xxlem4.1}], we will
study the basic algebra of $R\#H$ instead. In general, one can
replace $B$ by its basic algebra $B'$ if necessary.

From now on, $B=B_0\oplus B_1\oplus \cdots$ is a noetherian locally
finite graded algebra such that $B_0$ is a finite direct sum of
$\Bbbk$. The following lemma is needed.

\begin{lemma}
\label{xxlem4.2}
\cite[Theorem 5.1]{Ma}
Let $B$ be a Koszul algebra. Then $B$ is AS regular if and
only if $E:=\bigoplus_{n\ge0}\uExt^n_B(B_0,B_0)$ is a graded
Frobenius algebra.
\end{lemma}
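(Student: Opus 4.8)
The statement to prove is Lemma \ref{xxlem4.2}, attributed to \cite[Theorem 5.1]{Ma}: for a Koszul algebra $B$ (with $B_0$ a finite product of copies of $\Bbbk$), $B$ is AS regular if and only if its Koszul dual $E=E(B)$ is a graded Frobenius algebra.

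Since this is quoted from the literature, my plan is essentially to recall why Koszul duality converts the AS-regularity conditions into the Frobenius condition. The plan is to use the fundamental fact of Koszul duality: for a Koszul algebra $B$ with Koszul dual $E$, there is a duality between suitable derived/graded module categories under which the trivial module $B_0$ corresponds to $E$ (as a free module of rank one over itself) and, more precisely, a minimal graded projective resolution of $B_0$ over $B$ has term in cohomological degree $n$ given by $E_n^* \otimes B$, so that $\uExt^i_B(B_0,B_0)=E_i$ and, dually, $\uExt^i_{B}(B_0,B)$ is computed from the linear dual of $E$. The key translation is: $B$ has finite global dimension $d$ (equivalently $E$ is finite-dimensional, concentrated in degrees $0,\dots,d$) — this is where the Koszulity and the ``finite injective/global dimension'' clauses meet. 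Then condition (b) of AS-Gorensteinness, namely $\uExt^i_B(B_0,B)=0$ for $i\neq d$, translates under Koszul duality into the statement that $E$, viewed as a module over itself, has its linear dual $E^\ast$ concentrated appropriately — i.e. $E^\ast$ is projective, equivalently $E$ is injective as a module over itself. Combined with condition (c), $\uExt^d_B(B_0,B)\cong B_0(l)$, one gets that $E^\ast \cong E$ up to a degree shift and a twist by an automorphism, which is exactly the statement that $E$ is a (graded, twisted) Frobenius algebra.

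Concretely, the steps I would carry out in order: (1) Recall the Koszul dual and the Koszul complex: for Koszul $B$, $E=\bigoplus_n \uExt^n_B(B_0,B_0)$ is Koszul, finite over $B_0$ in each degree, and $\uExt^\bullet_B(B_0,M)$ for a Koszul module $M$ is computed by a standard complex built from $E$ and the ``linear part'' of $M$. (2) Observe $B$ has finite global dimension $d$ iff $E_n=0$ for $n\gg 0$, i.e. $E$ is finite-dimensional with top degree $d$; here finite GK-dimension of $B$ is automatic from Koszulity plus finite global dimension, so the extra AS-regular clauses (d),(e) match $E$ being finite-dimensional. (3) Compute $\uExt^\bullet_B(B_0,B)$: using that $B$ itself is a Koszul module (it is free), one gets $\uExt^i_B(B_0, B) \cong \underline{\Hom}_{E}(\ldots)$ — more precisely this cohomology, as a graded $E$-module, is identified with the graded $\Bbbk$-linear dual $E^{\#}$ of $E$ (up to a shift), via the pairing between the Koszul complex computing $\uExt_B(B_0,B_0)$ and the one computing $\uExt_{B^{op}}$. (4) Now read off: $\uExt^i_B(B_0,B)=0$ for $i\neq d$ and $\uExt^d_B(B_0,B)\cong B_0(l)$ is equivalent, after the identification in (3), to saying the graded dual $E^{\#}$ is (up to shift and twist) a projective — in fact free rank one — $E$-module, i.e. the left and right socle structure of $E$ matches its head; this is precisely the definition of graded Frobenius algebra. (5) The $B^{op}$ side (the ``on both sides'' in AS-Gorensteinness) corresponds to the symmetric statement for $E^{op}$, which is automatic once $E^{\#}\cong E(\ell)$ twisted, because $E^{\#}$ is naturally an $E$-bimodule. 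Conversely, if $E$ is graded Frobenius, run the identifications backwards to recover conditions (a)-(e) for $B$.

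The main obstacle, and the part deserving the most care, is step (3)–(4): correctly tracking the grading shifts and the twist by an automorphism (the Nakayama automorphism of the Frobenius algebra $E$, which corresponds to the AS index $l$ and potentially to the algebra structure on $B_0$). One has to be careful that the Koszul dual pairing is between $\uExt_B(B_0,B_0)$ and $\uExt_{B^{op}}(B_0,B_0)\cong E^{op}$, and that $\uExt_B(B_0,B)$ is a bimodule over $E$ whose identification with $E^{\#}$ must respect both module structures; conflating left and right actions is the easy trap. Since the result is cited from \cite{Ma}, in the paper itself it suffices to invoke that reference; I would only sketch the dictionary above to make the paper self-contained, and otherwise defer the full verification of the grading/twist bookkeeping to \cite[Theorem 5.1]{Ma}.
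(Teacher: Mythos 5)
Your proposal matches the paper's treatment: the paper gives no independent proof of Lemma \ref{xxlem4.2} but simply quotes \cite[Theorem 5.1]{Ma}, and you likewise defer the full verification to that reference while sketching the standard Koszul-duality dictionary (AS-Gorenstein conditions for $B$ $\leftrightarrow$ self-injectivity of $E$, finite global dimension $\leftrightarrow$ $E$ finite dimensional), which is indeed the argument behind the cited theorem. One caution about your step (2): finite GK-dimension of $B$ is \emph{not} automatic from Koszulity plus finite global dimension, nor even from $E$ being Frobenius (e.g.\ $\Bbbk\langle x_1,\dots,x_m\rangle/(\sum x_i^2)$ for $m\geq 3$ is Koszul of global dimension two with Frobenius Koszul dual but has exponential growth); clause (e) of Definition \ref{xxdef1.8} is rescued here by the standing noetherian hypothesis on $B$ in this section, under which finite global dimension forces finite GK-dimension (cf.\ \cite{StZ}).
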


Let $B$ be a Koszul algebra. We may construct a
differential bigraded algebra $\widetilde{B}$ by setting
$\widetilde{B}^0_j=B_j$ and $\widetilde{B}^i_j=0$ for all $i\neq0$
with zero differential.
We may construct a differential bigraded algebra $\widetilde{E}$
associated to $E$ \eqref{E4.0.1}
by setting $\widetilde{E}^i_{-i}=E_i$ and $\widetilde{E}^i_j=0$ for
all $i\neq -j$ with zero differential. A differential bigraded
$\widetilde{B}$-module $M$ is a bigraded $\widetilde{B}$-module
$M=\oplus_{i,j\in \mathbb Z} M^i_j$ together with a differential
$d$ such that $d(M^i_j)\subseteq M^{i+1}_j$ for all $i,j\in \mathbb Z$.
Note that the derived category of differential bigraded right
$\widetilde{B}$-modules (respectively, $\widetilde{E}$-modules)
is equivalent to the derived category of right graded $B$-modules
(respectively, $E$-modules). Then the general
Koszul duality of bigraded differential graded algebras induces
following duality between the derived category of $B$ and of $E$
\cite[Remark 4.1]{HW}, which is a slightly different from the
equivalence obtained in \cite{BGS}. Note that Smith-Van den Bergh
stated a similar result in \cite{SmV}.

\begin{proposition}
\label{xxpro4.3} \cite[Section 2.4]{SmV}
Let $B$ be a Koszul AS regular algebra. We have a duality of
triangulated categories:
$$K:D^b(\gr B)\longrightarrow D^b(\gr E),$$
where $K(M)=G(\RHom_B(M,B_0))$ for $M\in D^b(\gr B)$, and $G$
is the regrading functor defined by $G(X)^n_s=X^{n+s}_{-s}$ for
any complex of graded modules $X^\cdot$.
\end{proposition}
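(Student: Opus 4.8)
The plan is to derive the duality from the general Koszul duality for bigraded differential graded algebras applied to the pair $(\widetilde{B},\widetilde{E})$ constructed above, and then to check that the resulting functor restricts to the bounded derived categories of finitely generated modules and is implemented by $K=G\circ\RHom_B(-,B_0)$. First I would invoke the two comparison equivalences already recorded in the text: the derived category of differential bigraded right $\widetilde{B}$-modules is equivalent to $D(\Gr B)$, and likewise the derived category of differential bigraded $\widetilde{E}$-modules is equivalent to $D(\Gr E)$. Under these identifications, the general theorem (in the form of \cite[Remark 4.1]{HW} or \cite[Section 2.4]{SmV}) produces a duality of unbounded derived categories via a bar-type construction; the two things left to verify are that this abstract functor agrees with $G\circ\RHom_B(-,B_0)$ and that it carries $D^b(\gr B)$ into $D^b(\gr E)$.

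For the identification of the functor, I would compute on the level of complexes. For $M\in D^b(\gr B)$, the object $\RHom_B(M,B_0)$ is naturally bigraded by cohomological degree $n$ and internal degree $s$, and carries a right action of $E=\bigoplus_i \uExt^i_B(B_0,B_0)$ by Yoneda composition; this is exactly a differential bigraded $\widetilde{E}$-module in the sense above, where $\widetilde{E}^i_{-i}=E_i$. The regrading $G(X)^n_s=X^{n+s}_{-s}$ then converts this bigraded object into an honest singly-graded complex of $E$-modules, and checking that $G$ intertwines the $\widetilde{E}$-action with the genuine $E$-module structure is a bookkeeping verification that uses only the Koszulity of $B$, i.e.\ the linearity of the minimal graded projective resolution of $B_0$.

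For the finiteness, the AS regularity of $B$ is essential. Because $B$ is AS regular it has finite global dimension $d=\gldim B$, so every $M\in D^b(\gr B)$ is represented by a bounded complex of finitely generated graded projectives; hence $\RHom_B(M,B_0)$ is a bounded complex whose cohomology groups $\uExt^\bullet_B(M,B_0)$ are finite-dimensional, and $K(M)\in D^b(\gr E)$. In the other direction, Lemma \ref{xxlem4.2} shows that $E$ is a graded Frobenius algebra, and since $B$ is locally finite with $B_0$ finite-dimensional and $\gldim B=d<\infty$, the Yoneda algebra $E=\bigoplus_{n=0}^{d}\uExt^n_B(B_0,B_0)$ is finite-dimensional; this forces the quasi-inverse of $K$ to send finitely generated graded $E$-modules into $D^b(\gr B)$, so that the duality restricts to the bounded finitely generated subcategories on both sides.

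The step I expect to be the main obstacle is the functor identification: matching the bigrading conventions precisely — cohomological versus internal degree, the diagonal shift built into $G$, and the direction of the $E$-action produced by Yoneda composition — so that the abstract Koszul-duality functor coming from the bigraded DG machinery is identified on the nose with $G\circ\RHom_B(-,B_0)$, including its contravariance. Once these conventions are pinned down and the finiteness above is in place, the remaining assertions (that $K$ is a triangulated functor and a duality, with quasi-inverse of the same shape) follow formally from the general theorem.
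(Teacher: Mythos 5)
Your proposal follows essentially the same route as the paper, which gives no detailed proof of this proposition but obtains it from the general Koszul duality for bigraded differential graded algebras applied to the pair $(\widetilde{B},\widetilde{E})$, citing \cite[Remark 4.1]{HW} and \cite[Section 2.4]{SmV}. The points you flag for verification --- identifying the abstract duality with $G\circ\RHom_B(-,B_0)$, and using the finite global dimension of $B$ together with the finite-dimensionality of the Frobenius algebra $E$ to restrict to the bounded finitely generated subcategories --- are precisely what the cited general machinery supplies, so your outline matches the paper's argument.
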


Note that the functor $K$ commutes with the shift functors in
the following way
$$K(X^\cdot[1])=K(X^\cdot)[-1] \text{ and } K(X^\cdot(1))=
K(X^\cdot)[-1](1),$$
where as usual $[1]$ is the shift of the complexes and $(1)$ is
the shift of the gradings.

For $M\in \gr E$, let $\Omega(M)$ be the first syzygy of the
minimal graded projective resolution of $M$. Since $E$ is a
graded Frobenius algebra, $\Omega$ is an auto-equivalence of
the stable category $\underline{\gr} E$. Moreover,
$\underline{\gr} E$ is a triangulated category with the shift
functor $\Omega^{-1}$. We next show that the triangulated
category $\underline{\gr}E$ admits some thick triangulated
subcategories. Some terminologies are needed next. Let $M$ be a
finite dimensional right graded $E$-module. Take a minimal
graded projective resolution of $M$ as follows
$$\cdots\longrightarrow P^{-n}\longrightarrow\cdots\longrightarrow
P^0\longrightarrow M\longrightarrow 0.$$
Recall that the {\it complexity}  of $M$, denoted by $c(M)$,  is
the least integer $\lambda\ge0$ such that $\dim P^{-n}<a\, n^{\lambda-1}$
for almost all $n$, where $a>0$ is a fixed number. The definition
of the complexity can be modified even for noninteger $\lambda$. Note
that $c(M)=0$ if and only if $M$ is a finite dimensional graded
projective $E$-module (when $E$ is Frobenius). The complexity
of a module over a finite dimension algebra is an important
invariant in studying the representations of the algebra
\cite{Ca,CDW,GLW}.

Given a number $\lambda\geq 0$, let $\mathcal{C}_\lambda$
be the full subcategory of $\underline{\gr} E$ consisting of objects $M$
with $c(M)\leq \lambda$. Similar to \cite[Proposition 1.3]{CDW},
we have the following lemma, whose proof is almost the same as
that of \cite[Proposition 1.3]{CDW}.

\begin{lemma}
\label{xxlem4.4}
The induced category $\mathcal{C}_\lambda$ is a thick triangulated
subcategory of $\underline{\gr} E$.
\end{lemma}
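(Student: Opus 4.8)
The plan is to verify the three defining properties of a thick triangulated subcategory: that $\mathcal{C}_\lambda$ is closed under the shift functor $\Omega^{\pm 1}$, that it is closed under cones (i.e., if two vertices of a triangle lie in $\mathcal{C}_\lambda$ then so does the third), and that it is closed under direct summands. The computation is essentially bookkeeping on minimal projective resolutions and on the asymptotic growth rate of their ranks, so the proof should follow \cite[Proposition 1.3]{CDW} verbatim after translating to the graded setting.

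First I would record the elementary behaviour of complexity. If $P^{\bullet}\to M$ is a minimal graded projective resolution, then (since $E$ is graded Frobenius) $\Omega M$ has minimal projective resolution obtained by deleting $P^{0}$ and reindexing, so $c(\Omega M)=c(M)$; dually $c(\Omega^{-1}M)=c(M)$ using that $E$ is self-injective. Hence $\mathcal{C}_\lambda$ is closed under the shift $\Omega^{-1}$ and its inverse. For direct summands: if $M=M_1\oplus M_2$, then the minimal resolution of $M$ is the direct sum of those of $M_1$ and $M_2$, so $\dim P^{-n}=\dim P_1^{-n}+\dim P_2^{-n}$, which immediately gives $c(M)=\max\{c(M_1),c(M_2)\}$; in particular $c(M_i)\le c(M)\le \lambda$, so each summand lies in $\mathcal{C}_\lambda$.

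The key step is closure under cones. Given a triangle $M'\to M\to M''\to \Omega^{-1}M'$ in $\underline{\gr} E$, one lifts it (using that $\underline{\gr}E$ is the stable category of a Frobenius category) to a short exact sequence $0\to M'\to N\to M''\to 0$ in $\gr E$ where $N\cong M$ in the stable category, i.e.\ $N=M\oplus(\text{projective})$. The horseshoe lemma then produces a (not necessarily minimal) projective resolution of $N$ whose $n$-th term has dimension $\dim P'^{-n}+\dim P''^{-n}$; passing to the minimal resolution only decreases these dimensions. Therefore $c(N)\le\max\{c(M'),c(M'')\}$, and since $c(M)=c(N)$ (adding a projective summand does not change complexity, as the projective contributes only in degree $0$), we get $c(M)\le\lambda$ whenever $c(M'),c(M'')\le\lambda$. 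By the rotation of triangles and the already-established invariance of complexity under $\Omega^{\pm1}$, the same bound holds no matter which two of the three terms are assumed to lie in $\mathcal{C}_\lambda$. Finally $\mathcal{C}_\lambda$ is nonempty (it contains $0$, and all projectives when $\lambda\ge 0$ since $c=0$ for projectives), so it is a triangulated subcategory, and being closed under summands it is thick.

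The main obstacle — really the only non-formal point — is making the cone argument precise: one must be careful that the horseshoe-lemma resolution of the middle term is only an upper bound for the minimal one, and that stabilizing (replacing $N$ by $M$, which differs by a projective summand) genuinely leaves the complexity unchanged. Both are standard facts about minimal resolutions over a Frobenius algebra, and the asymptotic inequality $\dim P^{-n}\le a n^{\lambda-1}$ is preserved under finite sums and under passing to a subcomplex of smaller rank, so once these observations are in place the argument is routine. I expect no difficulty beyond transcribing \cite[Proposition 1.3]{CDW} into the graded language used here.
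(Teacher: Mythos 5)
Your proof is correct and follows essentially the same route as the paper's, which likewise reduces everything to the horseshoe-lemma estimate $c(M)\leq\max\{c(K),c(N)\}$ for a short exact sequence, the fact that projectives (which coincide with injectives over the Frobenius algebra $E$) have complexity zero, and the observation that a stable direct summand of $M$ is a module summand of $M\oplus P$ with $P$ projective. The only cosmetic difference is that the paper obtains the remaining two-out-of-three complexity inequalities by citing \cite[Lemma 2.5]{AR} or \cite[Lemma 2.7]{GLW}, whereas you recover them by rotating triangles and using the $\Omega^{\pm1}$-invariance of complexity, which is an equally valid (and self-contained) way to make the same point.
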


\begin{proof}
Let $0\to K\to M\to N\to0$ be an exact sequence in $\gr E$.
Then one sees $c(M)\leq\max\{c(K),c(N)\}$. Hence $M\in
\mathcal{C}_\lambda$ if $K$ and $N$ are in $\mathcal{C}_\lambda$.
Moreover, it follows from \cite[Lemma 2.5]{AR} or \cite[Lemma 2.7]{GLW}
that if $K$ and $M$ (resp. $M$ and $N$) are in $\mathcal{C}_\lambda$,
then so is $N$ (resp. $K$). Note that the projective modules and injective module coincide in the category gr$E$ and any projective module has zero complexity. If $U\to V\to Q\to \Omega^{-1}(U)$ is a
triangle in $\underline{\gr} E$ with $U, V\in \mathcal{C}_\lambda$,
then both $Q$ and $\Omega^{-1}U$ are in $\mathcal{C}_\lambda$,
where $\Omega^{-1}(U)$ is the first syzygy of $U$ by taking
minimal graded injective resolution of $U$. Hence
$\mathcal{C}_\lambda$ is a triangulated subcategory of
$\underline{\gr} E$. Now if $N$ is a direct summand of $M$ in
$\underline{\gr} E$, then $N$ is a direct summand of $M\oplus P$
in $\gr E$ for some finite dimensional projective module $P$.
Thus, we have $c(N)\leq c(M)$. Therefore $\mathcal{C}_\lambda$ is
thick.
\end{proof}

Let $D^b(proj E)$ be the full subcategory of
$D^b(\gr E)$ consisting of all bounded complexes of f.g.
graded projective $E$-modules. Then
$D^b(proj E)$ is equivalent to the triangulated subcategory
of $D^b(\gr E)$ generated by $E$. Since $E$ is a graded
Frobenius algebra, there is a well known equivalence of
triangulated categories \cite[Corollary 3.9(c)]{Be}
\begin{equation}\label{E4.4.1}\tag{E4.4.1}
\Phi:D^b(\gr E)/D^b(proj E)\longrightarrow \underline{\gr} E,
\end{equation}
where the functor $\Phi$ is defined in the following way.
%Note that any object in the quotient category is represented
%by an object in $D^b(\gr E)$.
Let $X^\cdot$ be an object in $D^b(\gr E)$. We write
$\overline{X^\cdot}$ for its image in $D^b(\gr E)/D^b(proj E)$.
Take a bounded above f.g. projective resolution
$P^\cdot$ of $X^\cdot$. Since $X^\cdot$ is bounded, we see
that there is an integer $n$ such that the cohomology
$H^iP^\cdot=0$ for all $i\leq -n$. Now, let $M$ be the
$n$th syzygy of the complex $P$. Note that in the triangulated category $D^b(\gr E)/D^b(proj E)$, the complex $P$ is isomorphic to its naive truncation $P^{\leq -n}$, which is isomorphic to $M[n-1]$. Hence
$\Phi(\overline{X^\cdot})=\Omega^{1-n}(M)$ \cite[Sect. 1.3]{Or}.

One can easily extend the notion of the complexity to
bounded complexes of modules over a finite dimensional algebra.
Let $\Lambda:=\Lambda_0\oplus \Lambda_1\oplus \cdots $ be a finite
dimensional graded algebra such that $\Lambda_0$ is a semisimple
algebra. Let $K^-(\gr \Lambda)$ be the homotopy
category of bounded above complexes of f.g. right
graded $\Lambda$-modules. A bounded above complex of graded
projective modules $P^\cdot\in K^-(\gr \Lambda)$ is called
{\it minimal} if ${\rm{im}} d^n$ is superfluous, namely,
${\rm{im}} d^n\subseteq P^n\Lambda_{\ge1}$ for all $n$, where
$\Lambda_{\ge1}=\Lambda_1\oplus\Lambda_2\oplus\cdots$. Note
that, for any bounded complex $X^\cdot$  of f.g.
right graded $\Lambda$-modules, there is a minimal complex of
graded projective modules $P^\cdot$ that is quasi-isomorphic
to $X^\cdot$. Such $P^\cdot$ is called a {\it minimal projective
resolution} of $X^\cdot$. It is known that minimal projective
resolution of a bounded complex of f.g. graded
$\Lambda$-module is unique up to isomorphism. We define the
{\it complexity} $c(X^\cdot)$ of $X^\cdot$ to be the least
integer $\lambda$ such that $\dim P^{-n}\leq a\, n^{\lambda-1}$
for almost all $n\ge0$. If two bounded complexes $X^\cdot$ and
$Y^\cdot$ are quasi-isomorphic, then their minimal projective
resolutions are isomorphic. Hence $c(X^\cdot)=c(Y^\cdot)$. Hence,
it makes sense to define the complexity of an object in
$D^b(\gr A)$. For $\lambda\ge0$, we denote by
$D^b_\lambda(\gr \Lambda)$ the full subcategory of
$D^b(\gr \Lambda)$ consisting of objects $X^\cdot$ such that
$c(X^\cdot)\leq \lambda$. One sees that $D_0^b(\gr \Lambda)$ is
equivalent to $D^b(proj \Lambda)$.

\begin{lemma}
\label{xxlem4.5}
Retain the above notation.
Then $D^b_\lambda(\gr \Lambda)$ is a thick triangulated
subcategory of $D^b(\gr \Lambda)$ for all $\lambda\ge 0$.
\end{lemma}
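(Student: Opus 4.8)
The plan is to verify the three defining properties of a thick triangulated subcategory for $D^b_\lambda(\gr\Lambda)$: closure under shifts, closure under cones (equivalently, the two-out-of-three property on triangles), and closure under direct summands. The key tool is the fact, established in the preceding discussion, that complexity of an object of $D^b(\gr\Lambda)$ is computed from its (unique up to isomorphism) minimal graded projective resolution $P^\cdot$, and hence is a genuine invariant of the quasi-isomorphism class. First I would observe that $D^b_\lambda(\gr\Lambda)$ is closed under the shift functors $[\pm1]$ and $(\pm1)$, since shifting a complex only reindexes its minimal projective resolution, leaving the growth rate of $\dim P^{-n}$ unchanged; thus $c(X^\cdot[m](s))=c(X^\cdot)$ for all integers $m,s$.

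Next I would handle triangles. Given a distinguished triangle $X^\cdot\to Y^\cdot\to Z^\cdot\to X^\cdot[1]$ in $D^b(\gr\Lambda)$, I would pass to minimal projective resolutions and use that, up to homotopy, such a triangle is represented by a short exact sequence of complexes of projectives, so that $Y^\cdot$ has a projective resolution that is (a minimal reduction of) the mapping cone, whose terms in degree $-n$ have dimension bounded by $\dim P_X^{-n}+\dim P_Z^{-n}$. Polynomial growth of each of two summands forces polynomial growth of the sum of the same degree, giving $c(Y^\cdot)\le\max\{c(X^\cdot),c(Z^\cdot)\}$. For the other two cases of two-out-of-three I would invoke the analogue of \cite[Lemma 2.5]{AR} / \cite[Lemma 2.7]{GLW} already cited in the proof of Lemma \ref{xxlem4.4}: if two of the three terms of a triangle lie in $D^b_\lambda$, then so does the third. (Alternatively, one can argue directly: if $X^\cdot$ and $Y^\cdot$ are in $D^b_\lambda$ then so is the cone $Z^\cdot$ by the mapping-cone estimate; and rotating the triangle reduces the remaining case to this one together with the shift-invariance of complexity.) This shows $D^b_\lambda(\gr\Lambda)$ is a triangulated subcategory.

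Finally, for thickness I would show closure under direct summands. If $Z^\cdot$ is a direct summand of $W^\cdot$ in $D^b(\gr\Lambda)$ with $W^\cdot\in D^b_\lambda$, write $W^\cdot\simeq Z^\cdot\oplus Z'^\cdot$; then a minimal projective resolution of $W^\cdot$ is the direct sum of minimal projective resolutions of $Z^\cdot$ and $Z'^\cdot$ (by uniqueness of minimal resolutions and the Krull--Schmidt property for f.g.\ graded projectives over a finite dimensional graded algebra with semisimple degree-zero part), so $\dim P_Z^{-n}\le\dim P_W^{-n}$ and hence $c(Z^\cdot)\le c(W^\cdot)\le\lambda$. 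I expect the main obstacle to be the bookkeeping needed to justify that the minimal projective resolution of the middle term of a triangle has terms bounded by the sum of those of the outer terms --- i.e.\ making precise the passage from a distinguished triangle to an honest short exact sequence of projective complexes and then to its minimal reduction without inflating the growth rate; once that is in hand, every step is a routine growth-of-polynomials comparison, exactly parallel to the proof of Lemma \ref{xxlem4.4}.
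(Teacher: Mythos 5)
Your proposal is correct and follows essentially the same route as the paper: complexity is read off the (unique) minimal graded projective resolution, the cone of a morphism is represented by the mapping cone of a lift to minimal resolutions (whose minimal reduction is a direct summand, giving the additive dimension bound), and closure under shifts and under direct summands is immediate from minimality and Krull--Schmidt. The only cosmetic difference is that you bound the middle term of a triangle while the paper bounds the cone directly, which is equivalent after rotation, as you note.
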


\begin{proof}
Take $X^\cdot, Y^\cdot\in D^b_\lambda(\gr \Lambda)$.
Let $f:X^\cdot \to Y^\cdot$ be a morphism in $D^b(\gr \Lambda)$,
and $X^\cdot \to Y^\cdot\to Z^\cdot\to X^\cdot[1]$ be a triangle.
Let $P^\cdot$ and $Q^\cdot$ be minimal projective resolutions
of $X^\cdot$ and $Y^\cdot$ respectively. The morphism $f$
induces a morphism $g:P^\cdot\to Q^\cdot$. Then $cone(g)\cong Z$
in $D^b(\gr \Lambda)$. Hence the minimal projective resolution
of $Z^\cdot$ is a direct summand of $cone(g)$. Therefore,
$c(Z^\cdot)\leq c(cone(g))\leq \lambda$, and hence
$D^b_\lambda(\gr \Lambda)$ is a triangulated subcategory
of $D^b(\gr \Lambda)$. It is clear that $D^b_\lambda(\gr \Lambda)$
is closed under taking direct summands.
\end{proof}

We can also extend the notion of the GK-dimension from modules to
bounded complexes. Let $S$ be a noetherian locally finite
graded algebra. Let $X^\cdot$ be a bounded complex
of right graded $S$-modules. Set
$H^\cdot(X^\cdot)=\bigoplus_{n\in{\mathbb Z}}H^n(X^\cdot)$, the total
cohomology of $X^\cdot$. Define the GK-dimension of
$X^\cdot$ as
$$\GKdim X^\cdot:=\GKdim H^\cdot(X^\cdot).$$
By definition, if two complexes $X^\cdot$ and $Y^\cdot$ are
quasi-isomorphic, then $\GKdim X^\cdot=\GKdim Y^\cdot$. Hence
it makes sense to say the GK-dimension of an object in the
bounded derived category. For $\lambda\ge0$, let
$D^b_{\GKgr_\lambda}(\gr S)$ be the full subcategory of
$D^b(\gr S)$ consisting of objects $X^\cdot$ such that
$\GKdim X^\cdot\leq \lambda$. One sees that $X^\cdot\in
D^b_{\GKgr_\lambda}(\gr S)$ if and only if $\GKdim
H^n(X^\cdot)\leq \lambda$ for all $n\in {\mathbb Z}$. Hence
we have the following.

\begin{lemma}
\label{xxlem4.6}
For every $\lambda \ge0$, $D^b_{\GKgr_\lambda}(\gr S)$ is
a thick triangulated subcategory of $D^b(\gr S)$.
\end{lemma}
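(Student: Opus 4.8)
The plan is to follow the same template as the proofs of Lemmas \ref{xxlem4.4} and \ref{xxlem4.5}, with the long exact cohomology sequence and the exactness of $\GKdim$ playing the roles that short exact sequences and sub-additivity of complexity played there. First I would record the elementary point already noted just before the statement: since $S$ is noetherian and locally finite and each term of a bounded complex in $\gr S$ is f.g., every cohomology module $H^n(X^\cdot)$ is a f.g. graded $S$-module, and $X^\cdot\in D^b_{\GKgr_\lambda}(\gr S)$ precisely when $\GKdim H^n(X^\cdot)\le\lambda$ for all $n$. In particular the defining condition depends only on the quasi-isomorphism class, so $D^b_{\GKgr_\lambda}(\gr S)$ is a well-defined full subcategory of $D^b(\gr S)$ that is closed under isomorphism.

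Next I would check closure under the shift functors: $H^n(X^\cdot[1])\cong H^{n+1}(X^\cdot)$, whence $\GKdim X^\cdot[1]=\GKdim X^\cdot$, and likewise for $[-1]$. For the remaining triangle axiom it then suffices, in view of this shift-closure and rotation of triangles, to show that if $X^\cdot\to Y^\cdot\to Z^\cdot\to X^\cdot[1]$ is a distinguished triangle with $X^\cdot,Y^\cdot\in D^b_{\GKgr_\lambda}(\gr S)$, then $Z^\cdot\in D^b_{\GKgr_\lambda}(\gr S)$. For each $n$ the long exact cohomology sequence of the triangle yields a short exact sequence of f.g. graded $S$-modules
$$0\longrightarrow \mathrm{coker}\big(H^n(X^\cdot)\to H^n(Y^\cdot)\big)\longrightarrow H^n(Z^\cdot)\longrightarrow \ker\big(H^{n+1}(X^\cdot)\to H^{n+1}(Y^\cdot)\big)\longrightarrow 0.$$
The left term is a quotient of $H^n(Y^\cdot)$, hence has $\GKdim\le\lambda$; the right term is a submodule of $H^{n+1}(X^\cdot)$, hence has $\GKdim\le\lambda$. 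Since $\GKdim$ is exact on f.g. graded modules over a noetherian locally finite graded algebra [Remark \ref{xxrem1.5}(2)], we conclude $\GKdim H^n(Z^\cdot)\le\lambda$ for all $n$, i.e. $Z^\cdot\in D^b_{\GKgr_\lambda}(\gr S)$. This establishes that $D^b_{\GKgr_\lambda}(\gr S)$ is a triangulated subcategory.

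Finally I would verify thickness, i.e. closure under direct summands: if $X^\cdot\cong Y^\cdot\oplus Z^\cdot$ in $D^b(\gr S)$ with $X^\cdot\in D^b_{\GKgr_\lambda}(\gr S)$, then $H^n(X^\cdot)\cong H^n(Y^\cdot)\oplus H^n(Z^\cdot)$, and exactness of $\GKdim$ gives $\GKdim H^n(Y^\cdot)\le\GKdim H^n(X^\cdot)\le\lambda$ and similarly for $Z^\cdot$, so both summands lie in the subcategory. There is no genuine obstacle here; the only point needing a little care is to keep track that every cohomology module in sight is finitely generated, so that the exactness of $\GKdim$ (and the sub/quotient monotonicity $\GKdim$ satisfies via \eqref{E1.0.2}) may be invoked, which is guaranteed by the noetherian and local finiteness hypotheses on $S$.
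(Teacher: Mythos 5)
Your argument is correct and matches the paper's intent: the paper gives no written proof, precisely because the criterion it records just before the statement (that $X^\cdot\in D^b_{\GKgr_\lambda}(\gr S)$ if and only if $\GKdim H^n(X^\cdot)\leq\lambda$ for all $n$) reduces everything to the long exact cohomology sequence and the exactness of $\GKdim$ on f.g.\ graded modules, exactly as you spell out. Your write-up simply supplies these routine details, so it is the same approach, carried out correctly.
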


We now go back to the $H$-action on a noetherian
connected graded AS regular algebra $R$.

\begin{setup}
\label{xxset4.7}
Throughout the rest of this section, we assume that $R$ is
a noetherian connected graded AS regular algebra which is
also a Koszul algebra. Let $B$ be the graded basic algebra
of $R\#H$. Recall that $\Bbbk$ is assumed to be algebraically
closed. Then $B_0\cong \Bbbk^{\oplus w}$ for some
$w>0$ [Lemma \ref{xxlem4.1}(4)]. Let
$E=\oplus_{n\ge0} \uExt_B^n(B_0,B_0)$ as given in \eqref{E4.0.1}.
\end{setup}

\begin{lemma}
\label{xxlem4.8}
Let $M$ be a right Koszul $B$-module. Then $\GKdim (M)=c(K(M))$.
\end{lemma}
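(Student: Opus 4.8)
The plan is to unpack both sides of the claimed equality $\GKdim(M) = c(K(M))$ using the Koszul duality functor $K$ of Proposition \ref{xxpro4.3} and the definition of complexity. Since $M$ is a Koszul right $B$-module, it has a linear minimal graded projective resolution $\cdots \to P^{-n} \to \cdots \to P^0 \to M \to 0$ with each $P^{-n}$ generated in degree $n$. First I would compute $K(M) = G(\RHom_B(M, B_0))$ explicitly: because the resolution is linear, the complex $\RHom_B(M, B_0)$ has, in cohomological degree $n$, the term $\uHom_B(P^{-n}, B_0)$, which is a vector space concentrated in internal degree $-n$ whose dimension equals the number of generators of $P^{-n}$. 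After applying the regrading functor $G$, the object $K(M) \in D^b(\gr E)$ becomes a module (not merely a complex): concretely $K(M)$ is the graded right $E$-module whose graded pieces are dual to the generators of the $P^{-n}$, so that $\dim K(M)_n = \operatorname{rk} P^{-n}$ for all $n \geq 0$. This identification is the standard Koszul-dual dictionary between linear resolutions over $B$ and graded modules over $E = E(B)$; I would cite \cite{BGS} (or \cite[Section 2.4]{SmV}) for it.

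Next I would relate the right-hand side $c(K(M))$ to the growth of $\dim K(M)_n$. By Lemma \ref{xxlem4.2}, since $R\#H$ — hence $B$ — is Koszul AS regular, the algebra $E$ is a graded Frobenius algebra; so the minimal graded projective resolution of the $E$-module $K(M)$ has terms $Q^{-n}$ with $c(K(M))$ equal to the least $\lambda$ with $\dim Q^{-n} < a n^{\lambda - 1}$ for almost all $n$. The key point is a double-duality: applying Koszul duality a second time (now over $E$, whose Koszul dual is $B$ again, up to the regrading) shows that the dimensions of the projective terms $Q^{-n}$ in a minimal $E$-resolution of $K(M)$ recover, up to a regrading shift, the Hilbert function of $M$ itself. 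More precisely, running the argument of the previous paragraph in reverse, $\dim Q^{-n}$ (summed appropriately over internal degrees) is comparable to $\dim M_n$. Therefore the polynomial growth rate of $n \mapsto \sum_{j \leq n} \dim Q^{-j}$ is the same as that of $n \mapsto \sum_{j \leq n} \dim M_j$, and by the Hilbert-series formula \eqref{E1.5.1}, $\GKdim M = \overline{\lim}_k \log_k \sum_{j \leq k} \dim M_j$ equals precisely $c(K(M))$.

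I would organize this as: (i) spell out $K(M)$ as a graded $E$-module with $\dim K(M)_n = \operatorname{rk} P^{-n}$; (ii) note $E$ is Frobenius so complexity is computed from a minimal graded projective resolution of $K(M)$ over $E$; (iii) invoke the Koszul duality between $\gr E$ and $\gr B$ (double duality) to identify the projective terms of that resolution, up to regrading, with the graded pieces of $M$; (iv) conclude by comparing the definition of complexity (growth of $\dim Q^{-n}$) with the Hilbert-series formula \eqref{E1.5.1} for $\GKdim M$. The main obstacle I anticipate is step (iii): making the double-duality bookkeeping precise, i.e., tracking the cohomological-vs-internal regrading through two applications of $K$ so that ``$\dim Q^{-n}$ is comparable to $\dim M_n$'' is stated with the correct shifts and with ``comparable'' meaning ``equal growth rate'' (one may lose exact equality of Hilbert functions but must retain equality of polynomial growth degrees, which is all \eqref{E1.5.1} needs). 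A secondary technical point is ensuring $K(M)$ really is a module concentrated in nonnegative cohomological degrees — this uses that $M$ is an honest module (not a complex) together with the linearity of its resolution, so there is no spreading across cohomological degrees after regrading.
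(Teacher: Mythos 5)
Your steps (i)--(iii) are essentially the paper's first ingredient: the minimal graded projective resolution of $K(M)$ over $E$ has terms $Q^{-n}\cong M_n^*\otimes_{E_0}E$ (the paper cites \cite[Proposition 5.1]{GM} for this rather than running the double duality by hand), so $\dim M_n\leq \dim Q^{-n}\leq (\dim E)\,\dim M_n$. That part is sound, and it immediately gives the inequality $\GKdim(M)\leq c(K(M))$, since a termwise bound $\dim Q^{-n}<a\,n^{\lambda-1}$ yields $\sum_{j\leq k}\dim M_j=O(k^{\lambda})$ and hence $\GKdim M\leq\lambda$ by \eqref{E1.5.1}.

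The gap is in your step (iv), i.e.\ in the reverse inequality $c(K(M))\leq\GKdim(M)$. Complexity is a \emph{termwise} condition ($\dim Q^{-n}<a\,n^{\lambda-1}$ for almost all $n$), whereas \eqref{E1.5.1} computes $\GKdim$ from the growth of the \emph{cumulative} sums $\sum_{j\leq k}\dim M_j$; these two growth measures do not agree for an arbitrary locally finite graded module. For instance, a Hilbert function with $\dim M_n=n^{2}$ when $n$ is a power of $2$ and $\dim M_n=1$ otherwise has cumulative growth of degree $2$ (so $\GKdim=2$) but admits no bound $\dim M_n<a\,n^{1}$, so the corresponding ``complexity'' would be $3$. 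Thus your assertion that the growth rate of $\sum_{j\leq n}\dim Q^{-j}$ determines $c(K(M))$, and hence that $\GKdim M=\lambda$ forces $\dim M_n\leq a\,n^{\lambda-1}$, is exactly the point that needs proof, and it cannot come from \eqref{E1.5.1} alone. The paper closes this by using that $B$ is the basic algebra of $R\#H$: one transports $M$ through the graded Morita equivalence to a module $N$ over $R\#H$ with $\GKdim N=\GKdim M=\lambda$, views $N$ as a finitely generated graded module over the noetherian connected graded AS regular algebra $R$, and invokes \cite[Proposition 2.21]{ATV} together with \cite[Corollary 2.2]{StZ} to get the termwise estimate $\dim N_n\leq a\,n^{\lambda-1}$ for $n\gg0$; this transfers to $\dim M_n$ up to a constant and then to $\dim Q^{-n}\leq(\dim E)\dim M_n$, giving $c(K(M))\leq\lambda$. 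Some input of this kind (regularity of the Hilbert function of a finitely generated module over an AS regular, or at least noetherian locally finite, algebra) must be added to your argument; without it the proof is incomplete, while the obstacle you flagged at step (iii) is comparatively harmless.
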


\begin{proof}
Since $B$ is a Koszul algebra, $K(M)$ is a Koszul $E$-module,
moreover, $K(M)$ has a minimal projective resolution
\cite[Proposition 5.1]{GM}
$$\cdots\longrightarrow P^{-n}\longrightarrow\cdots
\longrightarrow P^{-1}\longrightarrow P^0\longrightarrow K(M)
\longrightarrow0$$
such that $P^{-n}\cong M_n^*\otimes_{E_0} E$, where $M^*_n=\Hom(M_n,\Bbbk)$.

Assume $c(K(M))=\lambda$. Then $\dim M_n\leq \dim P^{-n}\leq
an^{\lambda-1}$ for some fixed number $a>0$ and for all $n\gg 0$.
Hence $\GKdim(M)\leq \lambda$.

Conversely, assume $\GKdim(M)=\lambda$. By assumption $B$ is
the basic algebra of $R\#H$. Hence there is a finitely
generated graded progenerator $R\#H$-module $Q$,
where $Q:=\bigoplus_{i=1}^w Q_i$ as given in
\eqref{E4.0.2}, such that ${\underline{\End}}_{R\#H}(Q)\cong B$.
Write $F=-\otimes_{R\#H} Q$. Assume $N$ is a graded right
$R\#H$-module such that $F(N)\cong M$. Then
$\GKdim(N)=\GKdim(M)=\lambda$ by Remark \ref{xxrem1.5}(3).
Note that $N$ is also a graded
right $R$-module and by assumption $R$ is a noetherian connected
graded AS regular algebra. By \cite[Proposition 2.21]{ATV} and
\cite[Corollary 2.2]{StZ}, we have $\dim N_n\leq a n^{\lambda-1}$
for some fixed number $a>0$ and for all $n\gg 0$.
Since $M=F(N)$, there is an $a_1>0$ such that
$\dim M_n\leq a_1 \dim N_n\leq a_1 a n^{\lambda-1}$ for all
$n\gg 0$. Assume $\dim E=b$. Then
$\dim P^{-n}\leq b\dim M_n\leq ba_1 an^{\lambda-1}$. Hence
$c(K(M))\leq \lambda$.
\end{proof}

\begin{lemma} \label{xxlem4.9}
For every integer $\lambda\geq 0$,
the Koszul duality given in Proposition {\rm{\ref{xxpro4.3}}}
restricts to the following anti-equivalence of triangulated
categories,
%for each $\lambda\ge 0$,
$$D^b_{\GKgr_\lambda}(\gr B)\cong D^b_\lambda(\gr E).$$
\end{lemma}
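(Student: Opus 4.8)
The statement claims that the Koszul duality $K \colon D^b(\gr B) \to D^b(\gr E)$ of Proposition~\ref{xxpro4.3} carries the subcategory $D^b_{\GKgr_\lambda}(\gr B)$ of complexes of GK-dimension $\leq \lambda$ onto the subcategory $D^b_\lambda(\gr E)$ of complexes of complexity $\leq \lambda$, and vice versa. Since $K$ is already an anti-equivalence of triangulated categories and both $D^b_{\GKgr_\lambda}(\gr B)$ and $D^b_\lambda(\gr E)$ are thick triangulated subcategories (Lemmas~\ref{xxlem4.6} and~\ref{xxlem4.5}), it suffices to show the two inclusions $K(D^b_{\GKgr_\lambda}(\gr B)) \subseteq D^b_\lambda(\gr E)$ and $K^{-1}(D^b_\lambda(\gr E)) \subseteq D^b_{\GKgr_\lambda}(\gr B)$ on objects; a triangulated functor respecting objects automatically respects the subcategory structure. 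The plan is to reduce from complexes to modules and then invoke Lemma~\ref{xxlem4.8}, which is precisely the module-level identity $\GKdim(M) = c(K(M))$ for a Koszul module $M$.

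\textbf{Step 1: reduction to Koszul modules.} First I would recall that, since $B$ is Koszul, every object of $D^b(\gr B)$ can be built from shifts of Koszul modules: the simples $S_i$ (concentrated in a single degree) are Koszul, and by shifting gradings and homological degrees one gets all $S_i(j)[k]$, which generate $D^b(\gr B)$ as a thick subcategory. More precisely, I would argue that for $X^\cdot \in D^b(\gr B)$ the GK-dimension $\GKdim X^\cdot = \GKdim H^\cdot(X^\cdot)$ is controlled by a finite filtration whose subquotients are shifts of simples, and similarly complexity in $D^b(\gr E)$ is additive-subadditive over triangles (as used in the proof of Lemma~\ref{xxlem4.5}). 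The cleanest route is: both invariants are ``triangulated'' in the sense that they behave subadditively on triangles and are invariant under shifts $[1]$ and $(1)$ (here one uses that $K$ intertwines $[1]$ and $(1)$ with $[-1]$ and $[-1](1)$ as recorded after Proposition~\ref{xxpro4.3}, so a degree/homological shift on the $B$-side does not change GK-dimension and is matched by a shift on the $E$-side that does not change complexity). Hence it is enough to check the equality of the two subcategories on a generating set, and for that one takes the generating set of Koszul $B$-modules.

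\textbf{Step 2: apply Lemma~\ref{xxlem4.8} and conclude.} For a Koszul module $M$ over $B$, Lemma~\ref{xxlem4.8} gives $\GKdim(M) = c(K(M))$, so $M \in D^b_{\GKgr_\lambda}(\gr B)$ iff $K(M) \in D^b_\lambda(\gr E)$. Passing to the thick closures: the smallest thick triangulated subcategory containing all Koszul modules of GK-dimension $\leq \lambda$ is $D^b_{\GKgr_\lambda}(\gr B)$ (every object of the latter has a finite filtration by shifts of such Koszul modules, by taking a minimal resolution degreewise or by dévissage on the total cohomology), and $K$ carries it to the thick triangulated subcategory generated by the modules $K(M)$ with $c(K(M)) \leq \lambda$, which is $D^b_\lambda(\gr E)$. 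The reverse inclusion is symmetric once one knows that $K$ is an anti-equivalence with quasi-inverse of the same shape (so Koszul $E$-modules pull back to complexes whose cohomology is built from Koszul $B$-modules). This yields the claimed restricted anti-equivalence.

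\textbf{Main obstacle.} The technical heart is Step~1: verifying that $D^b_{\GKgr_\lambda}(\gr B)$ really is the thick closure of the Koszul $B$-modules of GK-dimension $\leq\lambda$, and likewise on the $E$-side, so that the module-level statement of Lemma~\ref{xxlem4.8} propagates to complexes. One must be careful that the dévissage filtration of an arbitrary object of $D^b_{\GKgr_\lambda}(\gr B)$ has subquotients that are genuinely Koszul modules (not merely finitely generated graded modules) of GK-dimension $\leq\lambda$ — this uses that over a Koszul algebra the simple modules are Koszul and that GK-dimension is exact, so that a finite filtration of $H^n(X^\cdot)$ by shifts of simples has all subquotients of GK-dimension $\le\lambda$ — and, symmetrically, that complexity is unchanged under the bookkeeping shifts and subadditive on triangles, which is exactly what Lemmas~\ref{xxlem4.4} and~\ref{xxlem4.5} provide. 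Once that bookkeeping is in place, the proof is a direct application of Lemma~\ref{xxlem4.8}.
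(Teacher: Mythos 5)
Your overall strategy (cut down to Koszul modules, quote Lemma \ref{xxlem4.8}, and use thickness of the two subcategories) matches the paper's treatment of one direction, but as written it has two genuine gaps. First, on the $B$-side your d\'evissage is justified incorrectly: you reduce an object of $D^b_{\GKgr_\lambda}(\gr B)$ to its cohomology modules and then claim these are handled because "a finite filtration of $H^n(X^\cdot)$ by shifts of simples has all subquotients of GK-dimension $\le\lambda$." A finitely generated graded module of GK-dimension $\lambda\ge 1$ is infinite dimensional and admits no finite filtration by shifts of simples, and a thick subcategory is only closed under finite extensions; the simples generate exactly $D^b_{\GKgr_0}(\gr B)$, nothing more. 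So the key claim that every module $M$ with $\GKdim M\le\lambda$ lies in the thick closure of Koszul modules of GK-dimension $\le\lambda$ is left unproved. The missing ingredient, which is what the paper uses, is J{\o}rgensen's theorem that over the Koszul AS regular algebra $B$ the truncation $M_{\ge k}(k)$ has a linear resolution for $k\gg 0$: then $0\to M_{\ge k}\to M\to M/M_{\ge k}\to 0$ exhibits $M$ as an extension of a finite-dimensional module (the $\lambda=0$ case, where $K(B_0)\cong E$) by a Koszul module of the same GK-dimension, and Lemma \ref{xxlem4.8} applies to the latter.

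Second, the reverse inclusion is not "symmetric," and this is where your argument would actually fail. The analogous generation statement on the $E$-side --- that every object of $D^b_\lambda(\gr E)$ lies in the thick closure of Koszul $E$-modules of complexity $\le\lambda$ --- is not available by the same tricks: over the finite-dimensional Frobenius algebra $E$ every module has a finite filtration by shifts of simples, but the simple $E$-modules have complexity equal to $\GKdim B$ (their minimal resolutions grow like the Hilbert function of $B$), so such filtrations leave the subcategory; and truncation $N_{\ge k}$ is useless since $N$ is finite dimensional. One would need a further nontrivial input (e.g.\ that high syzygies over $E$ become linear), which you neither state nor prove. The paper avoids this entirely: for $Y^\cdot\in D^b_\lambda(\gr E)$ it takes a minimal projective resolution $P^\cdot$ with $\dim P^{-n}\le a\,n^{\lambda-1}$, uses minimality to bound $\sum_j\dim\Hom_{D^b(\gr E)}(Y^\cdot,E_0[n](j))$ by $\tfrac{a}{r}n^{\lambda-1}$, and then computes
$$\dim H^iK^{-1}(Y^\cdot)_j=\dim\Hom_{D^b(\gr E)}\bigl(Y^\cdot,E_0[i+j](-j)\bigr)\le \tfrac{a}{r}(i+j)^{\lambda-1},$$
which gives $\GKdim H^iK^{-1}(Y^\cdot)\le\lambda$ directly. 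You would need either to reproduce such a quantitative estimate or to supply the missing generation statement for $D^b_\lambda(\gr E)$; without one of these, only the inclusion $K(D^b_{\GKgr_\lambda}(\gr B))\subseteq D^b_\lambda(\gr E)$ is within reach of your outline, and even that requires repairing the first gap.
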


\begin{proof}
Firstly, notice that if $M$ is a f.g. Koszul
graded $B$-module, then, by Proposition \ref{xxpro4.3},
$H^i(K(M))=0$ for $i\neq0$.
In particular, $K(B_0)\cong E$. Hence
we see that the thick triangulated subcategory of $D^b(\gr B)$
generated by $B_0$ is anti-equivalent to the thick triangulated
subcategory of $D^b(\gr E)$ generated by $E$. The former
is equivalent to $D^b_{\GKgr_0}(\gr B)$ and the latter  is
equivalent to $D^b(proj E)$ which is in turn equivalent to
$D^b_0(\gr E)$. So we have proven the assertion for $\lambda=0$.

Next we assume that $\lambda\ge1$. By the Koszul duality, we
only need to show the following:
\begin{enumerate}
\item[(a)]
If $X^\cdot\in D^b_{\GKgr_\lambda}(\gr B)$, then
$K(X^\cdot)\in D^b_\lambda(\gr E)$.
\item[(b)]
If $Y^\cdot\in D^b_\lambda(\gr E)$, then
$K^{-1}(Y^\cdot)\in D^b_{\GKgr_\lambda}(\gr B)$.
\end{enumerate}
Since $D^b_{\GKgr_\lambda}(\gr B)$ and $D^b_\lambda(\gr E)$
are thick triangulated subcategories, it suffices to show
that,
\begin{enumerate}
\item[(c)]
for every f.g. right graded $B$-modules $M$
with $\GKdim\leq\lambda$, we have
$K(M)\in D^b_\lambda(\gr E)$, and
\item[(d)]
for every f.g.
right graded $E$-modules $N$ such that
$c(N)\leq\lambda$, we have $K^{-1}(N)\in D^b_{\GKgr_\lambda}(\gr B)$.
\end{enumerate}
Let $M$ be a f.g. right $B$-module with $\GKdim M\leq\lambda$.
Since $B$ is a Koszul AS-regular algebra, it follows that
for sufficiently large $k$ the graded $B$-module
$M_{\geq k}(k)$, where
$M_{\ge k}=M_k\oplus M_{k+1}\oplus\cdots$, has linear
resolution (see \cite[Theorem 3.1]{Jo1}, note that
it is assumed that the algebra is connected graded
in \cite {Jo1}, however, the same proof holds in our case),
that is, $M_{\ge k}(k)$
is a Koszul module. Consider the exact sequence
$$0\to M_{\ge k}\to M\to M/M_{\ge k}\to 0.$$
We see that $\GKdim M_{\ge k}=\GKdim M\leq \lambda$
and
$$K(M/M_{\ge k})\to K(M)\to K(M_{\ge k})\to K(M/M_{\ge k})[1]$$
is a triangle in $D^b(\gr E)$. Since $M/M_{\ge k}$ is
finite dimensional, $K(M/M_{\ge k})\in D^b_0(\gr E)$.
Since $M_{\ge k}(k)$ is a Koszul module, $c(K(M_{\ge k}))=
\GKdim(M_{\ge k})\leq \lambda$ by Lemma \ref{xxlem4.8}.
Hence $K(M_{\ge k})\in D^b_{\lambda}(\gr E)$. Therefore
$K(M)\in D^b_{\lambda}(\gr E)$. This verifies claim
(c).

For $Y^\cdot\in D^b_{\lambda}(\gr E)$, let $P^\cdot$ be a
minimal projective resolution of $Y^\cdot$.
Then $\dim P^{-n}\leq a\, n^{\lambda-1}$ for $n\gg 0$.
Since $B_0$ is a direct sum of finitely many copies of $\Bbbk$,
so is $E_0$. Assume that $E_0=\Bbbk\oplus\cdots\oplus\Bbbk$
is the sum of $s$ copies of $\Bbbk$, and that $e_1,\dots,e_s$
are the corresponding primitive idempotents. Then
$E=\oplus_{i=1}^sEe_i$. Assume $r=\min\{\dim Ee_i|i=1,\dots,s\}$.
By the minimality of $P^\cdot$, we see that
$$\dim P^{-n}\ge r\sum_{j\in {\mathbb Z}}\dim
\Hom_{D^b(\gr E)}(Y^\cdot, E_0[n](j)).$$
Hence
$$\sum_{j\in {\mathbb Z}}\dim \Hom_{D^b(\gr E)}(Y^\cdot, E_0[n](j))\leq
\frac{a}{r}\, n^{\lambda-1}.$$
Given $i$ and $j$, in the following computations, note that
$K(B)\cong E_0$ in $D^b(\gr E)$,
\begin{eqnarray*}
% \nonumber to remove numbering (before each equation)
  \dim H^iK^{-1}(Y^\cdot)_j&=&\dim \Hom_{D^b(\gr B)}(B[-i](-j),K^{-1}(Y^\cdot))\\
  &=&\dim \Hom_{D^b(\gr E)}(Y^\cdot,K(B[-i](-j)))\\
  &=& \dim \Hom_{D^b(\gr E)}(Y^\cdot,E_0[i+j](-j))\\
  &\leq& \frac{a}{r}(i+j)^{\lambda-1}.
\end{eqnarray*}
Hence $\GKdim H^iK^{-1}(Y^\cdot)\leq \lambda$ for all $i$.
Therefore, $K^{-1}(Y^\cdot)\in D^b_{\GKgr_\lambda}(\gr B)$.
Thus we verify claim (b) (and (d)).
\end{proof}

\begin{lemma}
\label{xxlem4.10}
There are equivalences of triangulated categories:
$$D^b_\lambda(\gr E)/D^b(proj E)\cong \mathcal{C}_\lambda,$$
for all $\lambda\ge0$.
\end{lemma}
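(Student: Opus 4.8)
The plan is to show that the Frobenius (singularity-category) equivalence
$$\Phi\colon D^b(\gr E)/D^b(proj E)\ \xrightarrow{\ \sim\ }\ \underline{\gr} E$$
of \eqref{E4.4.1} restricts to the desired equivalence on the subcategories cut out by the complexity bound $c(-)\le\lambda$. This equivalence is at our disposal because $E$ is a graded Frobenius algebra [Lemma~\ref{xxlem4.2}]. Since $D^b(proj E)=D^b_0(\gr E)$ and $D^b_0(\gr E)\subseteq D^b_\lambda(\gr E)$, the Verdier quotient $D^b_\lambda(\gr E)/D^b(proj E)$ makes sense and, by the usual roof calculus (the cone of any denominator lies in $D^b(proj E)\subseteq D^b_\lambda(\gr E)$, hence inside the triangulated subcategory $D^b_\lambda(\gr E)$), the functor induced by inclusion embeds it as the full triangulated subcategory of $D^b(\gr E)/D^b(proj E)$ whose objects are precisely those of $D^b_\lambda(\gr E)$; it is thick by Lemma~\ref{xxlem4.5}. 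Thus the whole statement reduces to the single identity
$$c(X^\cdot)=c\bigl(\Phi(\overline{X^\cdot})\bigr)\qquad\text{for every }X^\cdot\in D^b(\gr E),$$
where the left side is the complexity of the complex and the right side is the complexity of the stable $E$-module $\Phi(\overline{X^\cdot})\in\underline{\gr} E$. Granting this, an object of $D^b(\gr E)/D^b(proj E)$ lies in $D^b_\lambda(\gr E)/D^b(proj E)$ iff it is represented by a complex of complexity $\le\lambda$ iff its $\Phi$-image has complexity $\le\lambda$ iff that image lies in $\mathcal{C}_\lambda$; since $\Phi$ is an equivalence (hence essentially surjective), it therefore restricts to an equivalence of triangulated categories $D^b_\lambda(\gr E)/D^b(proj E)\cong\mathcal{C}_\lambda$.

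To establish the identity I would unwind the construction of $\Phi$ recalled after \eqref{E4.4.1}. Choose the \emph{minimal} graded projective resolution $P^\cdot$ of $X^\cdot$ and an integer $n\gg 0$ with $H^iP^\cdot=0$ for $i\le -n$; then $\Phi(\overline{X^\cdot})=\Omega^{1-n}(M)$, where $M$ is the $n$th syzygy of $P^\cdot$. Because $P^\cdot$ is minimal, the minimal graded projective resolution of the module $M$ is exactly the tail of $P^\cdot$, so the terms of the minimal resolution of $M$ and those of $X^\cdot$ have the same polynomial growth rate, giving $c(M)=c(X^\cdot)$. Finally, the complexity is invariant under the syzygy auto-equivalence $\Omega$ of $\underline{\gr} E$ and under $\Omega^{-1}$: over the Frobenius algebra $E$ projective and injective modules coincide, so $\Omega^{\pm 1}$ merely shifts the minimal projective ($=$injective) resolution and leaves its growth rate unchanged — this is already implicit in Lemma~\ref{xxlem4.4}, where $\mathcal{C}_\lambda$ is shown to be closed under the shift functor $\Omega^{-1}$ and its inverse. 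Hence $c(\Phi(\overline{X^\cdot}))=c(\Omega^{1-n}M)=c(M)=c(X^\cdot)$, as required.

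The main obstacle is precisely this complexity identity: one must verify that replacing a bounded complex by a high syzygy module, and then applying $\Omega^{1-n}$ inside $\underline{\gr} E$, alters neither the polynomial growth rate of a minimal resolution nor, therefore, the complexity. This hinges on working throughout with genuinely \emph{minimal} projective resolutions — so that syzygy modules inherit the tails of resolutions — together with the Frobenius property of $E$, which is what makes $\Omega^{-1}$ act as an (inverse) syzygy on actual modules rather than only as an abstract triangulated shift.
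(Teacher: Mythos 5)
Your proposal is correct and follows essentially the same route as the paper: both restrict the stable-category equivalence $\Phi$ of \eqref{E4.4.1} using thickness of $D^b_\lambda(\gr E)$ (Lemma \ref{xxlem4.5}) and the fact that $\Phi$, computed via minimal resolutions and syzygies over the Frobenius algebra $E$, preserves the complexity bound. The only cosmetic difference is that the paper gets essential surjectivity onto $\mathcal{C}_\lambda$ immediately from $\Phi(N)\cong N$ for a module $N$ viewed as a complex in degree zero, whereas you deduce it from the (correct, slightly stronger) two-sided identity $c(X^\cdot)=c(\Phi(\overline{X^\cdot}))$.
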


\begin{proof}
By Lemma \ref{xxlem4.5}, $D^b_\lambda(\gr E)$ is a thick
triangulated subcategory of $D^b(\gr E)$ for each $\lambda\ge0$.
One sees that $D^b_\lambda(\gr E)/D^b(proj E)$ is a full
triangulated subcategory of $D^b(\gr E)/D^b(proj E)$ for
each $\lambda\ge1$. Consider the equivalence
$$\Phi:D^b(\gr E)/D^b(proj E)\longrightarrow \underline{\gr} E$$
as established by (\ref{E4.4.1}). By the definition of $\Phi$,
one easily sees that if $c(X^\cdot)\leq\lambda$ then
$c(\Phi(X^\cdot))\leq\lambda$. Hence $\Phi$ sends objects
in $D^b_\lambda(\gr E)/D^b(proj E)$ to objects in
$\mathcal{C}_\lambda$. If $N$ is a right graded $E$-module
such that $c(N)\leq \lambda$, then we may view $N$ as an
object in $D^b_\lambda(\gr E)/D^b(proj E)$, and we have
$\Phi(N)\cong N\in \mathcal{C}_\lambda$. Therefore the
restriction of $\Phi$ to $D^b_\lambda(\gr E)/D^b(proj E)$
is essential dense onto $\mathcal{C}_\lambda$, and hence
$D^b_\lambda(\gr E)/D^b(proj E)$ is equivalent to
$\mathcal{C}_\lambda$.
\end{proof}

We are ready to prove the main result of this section,
which may be viewed as a generalization of the classical
BGG correspondence \cite{BGG, Jo2, Mor1}.

\begin{theorem}
\label{xxthm4.11}
Let $B$ and $E$ be as in Setup {\rm{\ref{xxset4.7}}}. Then,
for each $\lambda\ge0$, there is an
anti-equivalence of triangulated categories
$$D^b(\qgr_\lambda B)\cong \underline{\gr}E/\mathcal{C}_\lambda.$$
\end{theorem}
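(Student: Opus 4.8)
The plan is to assemble the equivalence from the three ingredients already developed in this section. First, recall that $\qgr_\lambda B = \gr B/\gr_\lambda B$ by definition, and since $B$ is noetherian with finite global dimension (it is the basic algebra of $R\#H$, which is AS regular by the discussion after Lemma~\ref{xxlem4.1}), one has a standard localization equivalence of triangulated categories $D^b(\qgr_\lambda B)\cong D^b(\gr B)/D^b_{\GKgr_\lambda}(\gr B)$. Here $D^b_{\GKgr_\lambda}(\gr B)$ is the thick subcategory of complexes with total cohomology of GK-dimension $\le\lambda$ (Lemma~\ref{xxlem4.6}), which is precisely the subcategory generated by $\gr_\lambda B$; the point is that passing to the bounded derived category is compatible with Serre quotients because $\gr_\lambda B$ is a Serre subcategory and $B$ has finite global and GK-dimension. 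I would cite the usual Miyachi/Verdier quotient machinery for $D^b$ of abelian categories here rather than reprove it.

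Next, apply the Koszul duality $K$ of Proposition~\ref{xxpro4.3}, which is an anti-equivalence $D^b(\gr B)\simeq D^b(\gr E)$. By Lemma~\ref{xxlem4.9} it carries the thick subcategory $D^b_{\GKgr_\lambda}(\gr B)$ onto the thick subcategory $D^b_\lambda(\gr E)$ of complexes of complexity $\le\lambda$. Since an (anti-)equivalence of triangulated categories sending one thick subcategory onto another descends to an (anti-)equivalence of the Verdier quotients, this gives
$$D^b(\gr B)/D^b_{\GKgr_\lambda}(\gr B)\;\cong\;D^b(\gr E)/D^b_\lambda(\gr E)$$
as an anti-equivalence. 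I then want to rewrite the right-hand side. Because $E$ is a graded Frobenius algebra (Lemma~\ref{xxlem4.2}), $D^b(\gr E)/D^b(\mathrm{proj}\,E)\cong \underline{\gr}\,E$ via the functor $\Phi$ of \eqref{E4.4.1}, and $D^b_0(\gr E)$ is exactly $D^b(\mathrm{proj}\,E)$. Quotienting in two stages (first by $D^b(\mathrm{proj}\,E)=D^b_0(\gr E)\subseteq D^b_\lambda(\gr E)$, then by the image of $D^b_\lambda(\gr E)$) and using Lemma~\ref{xxlem4.10}, which identifies $D^b_\lambda(\gr E)/D^b(\mathrm{proj}\,E)$ with $\mathcal{C}_\lambda\subseteq\underline{\gr}\,E$, yields
$$D^b(\gr E)/D^b_\lambda(\gr E)\;\cong\;\bigl(D^b(\gr E)/D^b(\mathrm{proj}\,E)\bigr)\big/\bigl(D^b_\lambda(\gr E)/D^b(\mathrm{proj}\,E)\bigr)\;\cong\;\underline{\gr}\,E/\mathcal{C}_\lambda.$$
Chaining the three identifications produces the claimed anti-equivalence $D^b(\qgr_\lambda B)\cong\underline{\gr}\,E/\mathcal{C}_\lambda$.

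The routine parts are the various quotient-of-quotient manipulations, for which I would invoke the general fact that for thick subcategories $\mathcal{S}\subseteq\mathcal{T}\subseteq\mathcal{D}$ one has $\mathcal{D}/\mathcal{T}\cong(\mathcal{D}/\mathcal{S})/(\mathcal{T}/\mathcal{S})$, together with the fact that a triangulated (anti-)equivalence matching thick subcategories matches their quotients. The step I expect to be the main obstacle — and the one deserving the most care — is the very first: verifying cleanly that $D^b(\qgr_\lambda B)\cong D^b(\gr B)/D^b_{\GKgr_\lambda}(\gr B)$, i.e.\ that the bounded derived category of the Serre quotient $\gr B/\gr_\lambda B$ agrees with the Verdier quotient of $D^b(\gr B)$ by complexes supported in $\gr_\lambda B$. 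This requires knowing that $\gr_\lambda B$ is closed under subquotients and extensions in $\gr B$ (true, since $\GKdim$ is exact — Remark~\ref{xxrem1.5}(2)), that $D^b_{\GKgr_\lambda}(\gr B)$ is exactly the full subcategory of complexes with cohomology in $\gr_\lambda B$ (Lemma~\ref{xxlem4.6}), and a finiteness/regularity input so that the quotient behaves well on $D^b$; here the finite global dimension of $B$ and the CM property feeding into finiteness of GK-dimension are what make it go through. Once that identification is in place, the rest is formal bookkeeping with the lemmas already proven.
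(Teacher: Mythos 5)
Your proposal is correct and follows essentially the same route as the paper: Koszul duality restricted along Lemma \ref{xxlem4.9} to get $D^b(\gr B)/D^b_{\GKgr_\lambda}(\gr B)\cong D^b(\gr E)/D^b_\lambda(\gr E)$, the two-stage Verdier quotient via $\Phi$ and Lemma \ref{xxlem4.10} to identify the right side with $\underline{\gr}\,E/\mathcal{C}_\lambda$, and Miyachi's theorem \cite[Theorem 3.2]{Mi} to identify the left side with $D^b(\qgr_\lambda B)$. The only cosmetic difference is that the paper invokes Miyachi using just the fact that $\GKgr_\lambda B$ is a Serre subcategory of $\gr B$, without the extra finite-global-dimension input you flag as the delicate point.
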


\begin{proof} By the Koszul duality in Proposition \ref{xxpro4.3},
we have an anti-equivalence $K:D^b(\gr B)\longrightarrow D^b(\gr E)$.
Lemma \ref{xxlem4.9} and its proof show that $K$ restricts to
anti-equivalences $D^b_{\GKgr_\lambda}(\gr B)\cong D^b_\lambda(\gr E)$
for each $\lambda\ge0$. Hence we obtain anti-equivalences
$$D^b(\gr B)/D^b_{\GKgr_\lambda}(\gr B)\cong D^b(\gr E)/D^b_\lambda(\gr E)$$
for each $\lambda\ge0$. By the equivalence
$\Phi:D^b(\gr E)/D^b(proj E)\longrightarrow \underline{\gr} E$ and
Lemma \ref{xxlem4.10}, we have
$$D^b(\gr E)/D^b_\lambda(\gr E)\cong \underline{\gr}E/\mathcal{C}_\lambda.$$
On the other hand, by \cite[Theorem 3.2]{Mi} we have the equivalence of
triangulated categories
$$D^b(\gr B)/D^b_{\GKgr_\lambda}(\gr B)\cong D^b(\qgr_\lambda B),$$
since $\GKgr_\lambda B$ is a Serre subcategory of $\gr B$. Combine the
(anti-)equivalences above, we finally obtain the anti-equivalences as required.
\end{proof}

\begin{remark}
\label{xxrem4.12}
In the case when $\lambda=0$, we have $\mathcal{C}_0=0$ and
$\qgr_0 B=\qgr B$, the quotient category of $\gr B$ modulo
by the Serre subcategory of finite dimensional graded
$B$-modules. Then the anti-equivalence in the theorem above
has the form
$$D^b(\qgr B)\cong \underline{\gr} E$$
which is the classical BGG correspondence established in
\cite{BGG}, \cite[Theorem 3.1]{Jo2} and \cite[Theorem 5.3]{Mor1}.
\end{remark}

\begin{corollary}
\label{xxcor4.13}
Assume Setup {\rm{\ref{xxset4.7}}}.
Let $R$ be CM{\rm{$(2)$}} of $\GKdim $ $d\geq 2$.
If $R\#H\cong \text{End}_{R^H}(R)$, then there is a canonical
anti-equivalence of triangulated categories
$$D^b(\qgr_{d-2} R^H)\cong \underline{\gr} E/\mathcal{C}_{d-2}$$
where $E=\bigoplus_{n\geq 0} \Ext^n_{R\# H}(H,H)$.
\end{corollary}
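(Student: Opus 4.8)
The plan is to obtain the anti-equivalence by composing three facts already in place above: the equivalence of quotient categories produced by a lower bound on the pertinency (Theorem~\ref{xxthm3.7}), the graded Morita invariance that lets one replace $R\#H$ by its basic algebra $B$, and the BGG-type anti-equivalence of Theorem~\ref{xxthm4.11} for $B$. \textbf{Step 1 (pertinency).} Under Setup~\ref{xxset4.7} the pair $(R^H,R\#H)$ satisfies Hypothesis~\ref{xxhyp3.2}(1--3) with $\partial=\GKdim$, and by assumption $R$ is $\textup{CM}(2)$ with $\GKdim R=d\ge 2$. The hypothesis $R\#H\cong\End_{R^H}(R)$ is precisely condition (ii) of Theorem~\ref{xxthm3.5}, which is equivalent there to $\Pty(R,H)\ge 2$. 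Taking $\alpha=2\le\Pty(R,H)$ in Theorem~\ref{xxthm3.7} yields an equivalence of abelian categories $\qgr_{d-2}R^H\cong\qgr_{d-2}R\#H$, hence an equivalence of bounded derived categories $D^b(\qgr_{d-2}R^H)\cong D^b(\qgr_{d-2}R\#H)$.

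\textbf{Step 2 (basic algebra).} Next I would replace $R\#H$ by its graded basic algebra $B$ of Setup~\ref{xxset4.7}. Since $R$ is connected graded AS regular and Koszul, $R\#H$ is noetherian, Koszul and AS regular, and hence so is $B$ by Lemma~\ref{xxlem4.1}(3); moreover $B_0\cong\Bbbk^{\oplus w}$ because $\Bbbk$ is algebraically closed. Lemma~\ref{xxlem4.1}(1) gives a graded equivalence $\Gr(R\#H)\cong\Gr B$, realised by a progenerator functor $-\otimes_{R\#H}Q$; since $\GKdim$ is invariant under such a functor and its quasi-inverse [Remark~\ref{xxrem1.5}(3)], this equivalence matches $\Gr_{d-2}(R\#H)$ with $\Gr_{d-2}B$, restricts to $\gr(R\#H)\cong\gr B$ matching $\gr_{d-2}$ on both sides, and therefore descends to $\qgr_{d-2}(R\#H)\cong\qgr_{d-2}B$. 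Combining with Step 1 gives $D^b(\qgr_{d-2}R^H)\cong D^b(\qgr_{d-2}B)$. This compatibility of the dimension filtration with the Morita equivalence is the only non-formal point of the argument, and it is exactly where Morita-invariance of $\GKdim$ is used in both directions.

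\textbf{Step 3 (BGG) and conclusion.} Finally I apply Theorem~\ref{xxthm4.11} to $B$ with $\lambda=d-2\ge 0$, which produces an anti-equivalence of triangulated categories $D^b(\qgr_{d-2}B)\cong\underline{\gr}E/\mathcal{C}_{d-2}$, where $E=\bigoplus_{n\ge 0}\uExt^n_B(B_0,B_0)$. It remains only to reconcile the two descriptions of $E$: by Lemma~\ref{xxlem4.1}(2) the Yoneda algebra $\bigoplus_n\Ext^n_{R\#H}(S,S)$ of $R\#H$ (denoted $\bigoplus_n\Ext^n_{R\#H}(H,H)$ in the statement, with $S$ the sum of the graded simple right $R\#H$-modules concentrated in degree $0$) is graded Morita equivalent to $E(B)$, so $\underline{\gr}E$ and the thick subcategories $\mathcal{C}_{d-2}$ agree up to equivalence for either choice of $E$. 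Concatenating the (anti-)equivalences of Steps 1--3 delivers
$$D^b(\qgr_{d-2}R^H)\cong\underline{\gr}E/\mathcal{C}_{d-2},$$
and the composite is canonical since each of the three constituents is. Granting Step 2, the remainder is a purely formal chaining of Theorems~\ref{xxthm3.5}, \ref{xxthm3.7} and \ref{xxthm4.11}, so I expect no further obstacle.
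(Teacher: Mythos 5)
Your proposal is correct and follows essentially the same route as the paper: pass from the hypothesis $R\#H\cong\End_{R^H}(R)$ through Theorem \ref{xxthm3.5} to the equivalence $\qgr_{d-2}R^H\cong\qgr_{d-2}R\#H$, transfer to the basic algebra $B$ by graded Morita equivalence (which preserves $\GKdim$, hence the dimension filtration), apply Theorem \ref{xxthm4.11} with $\lambda=d-2$, and identify the Yoneda algebras via Lemma \ref{xxlem4.1}(2). The only cosmetic difference is that you route through $\Pty(R,H)\ge 2$ and Theorem \ref{xxthm3.7}, while the paper invokes the quotient-category equivalence of Theorem \ref{xxthm3.5}(i) directly; the content is identical.
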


\begin{proof}
Note that $R$ and $H$ are in the situation as in Theorem \ref{xxthm3.5}.

Assume that $R\#H\cong \text{End}_{R^H}(R)$, we will show the category
equivalence. Take $B$ to be the basic algebra of $R\#H$. Then
$R\#H$ and $B$ are graded Morita equivalence. Since $R$ is a
Koszul algebra, $E$ and $E':=\bigoplus_{n\geq 0} \Ext^n_{B}(B_0,B_0)$
are also graded Morita equivalent [Lemma \ref{xxlem4.1}(2)].
Since the graded Morita equivalence preserves the GK-dimension, we
have that $\GKgr_\lambda(R\#H)$ and $\GKgr_\lambda B$
are equivalent. As a consequence, $D^b(\qgr_\lambda B)$ is equivalent to
$D^b(\qgr_\lambda R\#H)$. Combining this with Theorems \ref{xxthm3.5}
and \ref{xxthm4.11}, we have
$$D^b(\qgr_{d-2} R^H)\cong
D^b(\qgr_{d-2} R\#H) \cong D^b(\qgr_{d-2} B)
\cong\underline{\gr} E'/\mathcal{C}_{d-2}
\cong\underline{\gr} E/\mathcal{C}_{d-2}.$$
\end{proof}

\begin{proof}[Proof of Theorem \ref{xxthm0.7}]. (1) is Theorem \ref{xxthm4.11}. 
(2) follows from Theorems \ref{xxthm4.11} and \ref{xxthm3.7}.
\end{proof}

\section{Proof of Theorem \ref{xxthm0.5}}
\label{xxsec5}

In this section let $\Bbbk$ be a field and ${\rm{char}}\;
\Bbbk \nmid 2n$. We consider some finite group actions on
the $(-1)$-skew polynomial algebra $\Bbbk_{-1}[x_1,\dots,x_n]$
and prove Theorem \ref{xxthm0.5}. We need a few lemmas.

Let $R$ be a graded domain. Let $G$ be a finite group and $H$ its dual group algebra $(\Bbbk G)^{\circ}$
acting on $R$ homogeneously. Assume that the $H$-action on $R$ is
{\it inner faithful}. If $R$ is generated in degree 1, then this is equivalent to that
$H$-action on $R_1$ is inner faithful. In this setting, $R$ is also
${\mathbb Z}\times G$-graded. Forgetting the ${\mathbb Z}$-grading,
$R$ is $G$-graded and $R$ is not $G'$-graded for any proper subgroups
$G'\subsetneq G$. Write $R=\oplus_{g\in G} R_g$. Part (4) of the next lemma
is due to \cite{CKWZ2}. Let $I$ be the ideal $(e)$ in $B:=R\# H$
where $e=1\# \inth$ and $\inth$ is the integral of $H$ such that
$\epsilon(\inth)=1$.

\begin{lemma}
\label{xxlem5.1}
Let $R$ be a connected graded domain.
Let $H:=(\Bbbk G)^{\circ}$ act on $R$ inner faithfully and write
$R=\oplus_{g\in G} R_g$.
\begin{enumerate}
\item[(1)]
For every $g\in G$, $R_g\neq 0$.
\item[(2)]
$R$ is a free $H$-module.
\item[(3)]
If $f\in \bigcap_{g\in G}R R_g$, then $f\# 1\in I$.
\item[(4)]
\cite[Lemma 4.8(2)]{CKWZ2}
Suppose $f_1, \cdots, f_m\in R$ are $G$-homogeneous elements in $R$
such that  $\{h_w\mid w=1, \cdots, m\}=G$ where $$h_w:=\deg_G(f_w)\cdots \deg_{G}(f_m).$$
Then $f_1\cdots f_m \# 1\in I$.
\item[(5)]
Suppose $d:=\GKdim R<\infty$. Then $\Pty(R,H)\geq 1$.
As a consequence, $R\# H$ is prime and the algebra map
$$R\# H\to \End_{R^H}(R)$$
is injective.
\item[(6)]
$\Pty(R,H)\geq d-\GKdim R/(\bigcap_{g\in G}R R_g)\geq
d-\max\{\GKdim R/RR_g\mid g\in G\}.$
\item[(7)]
If there is an integer $s$ such that $RR_g\supseteq R_{\geq s}$
for all $g\in G$, then $\Pty(R,H)=d$.
\end{enumerate}
\end{lemma}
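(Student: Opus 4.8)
\emph{Strategy.} The plan is to reduce the whole lemma to a single computation: an explicit description of the two-sided ideal $I=(1\#\inth)$ of $B:=R\#H$. Write $H=(\Bbbk G)^{\circ}$ with the basis $\{p_g\}_{g\in G}$ dual to $G$, so that $\Delta(p_g)=\sum_{ab=g}p_a\otimes p_b$, $\varepsilon(p_g)=\delta_{g,1}$ (here $1$ is the identity of $G$), the unique integral with $\varepsilon(\inth)=1$ is $\inth=p_1$, the $H$-action is $p_g\cdot r=r_g$, and $R^H=R_1$. Since $(r\#p_a)(r'\#p_b)=r\,r'_{ab^{-1}}\#p_b$ in $R\#H$ and $1\in R_1$, one gets $(r\#p_a)e=\delta_{a,1}\,r\#p_1$ and $e(r'\#p_b)=r'_{b^{-1}}\#p_b$ for $e=1\#\inth$, so $BeB$ is spanned by the elements $\delta_{a,1}\,r\,r'_{b^{-1}}\#p_b$; that is,
$$I=BeB=\bigoplus_{g\in G}\bigl(RR_{g^{-1}}\bigr)\#p_g,\qquad\text{hence}\qquad B/I\cong\bigoplus_{g\in G}R/RR_{g^{-1}}$$
as graded $\Bbbk$-modules. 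I expect parts (3), (6), (7) to drop out of this, with (5) needing one growth estimate on top of it.

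\emph{Parts (1) and (2).} I would set $S=\{g\in G\mid R_g\neq0\}$. Then $1\in S$ (the unit of $R$ lies in $R_1$), and $S$ is a submonoid of $G$ because $R$ is a domain: a product of nonzero homogeneous elements is a nonzero homogeneous element of the product degree. As $G$ is finite, $S$ is then closed under inverses, so $S$ is a subgroup; since $R$ is $S$-graded, inner faithfulness forces $S=G$, which is (1). For (2), given $g,h\in G$ pick $0\neq a\in R_{gh^{-1}}$ using (1); left multiplication by $a$ embeds $R_h$ into $R_g$, so $\dim_\Bbbk R_g=\dim_\Bbbk R_h$ for all $g,h$. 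Since $H\cong\Bbbk^{|G|}$ and the simple $H$-module supported at $g$ appears in $R=\bigoplus_gR_g$ with multiplicity $\dim_\Bbbk R_g$, equality of these multiplicities is exactly the freeness of $R$ over $H$.

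\emph{Parts (3), (4), (6).} From the formula above, $f\#1=\sum_gf\#p_g$ lies in $I$ iff $f\in RR_{g^{-1}}$ for all $g$, i.e. iff $f\in\bigcap_gRR_g$; this gives (3) (in fact as an equivalence). For (4), with $h_w=\deg_G(f_w)\cdots\deg_G(f_m)$ one has $f_w\cdots f_m\in R_{h_w}$, so $f_1\cdots f_m\in RR_{h_w}$ for every $w$; if $\{h_w\mid w\}=G$ then $f_1\cdots f_m\in\bigcap_gRR_g$ and (3) applies, recovering \cite[Lemma 4.8(2)]{CKWZ2}. For (6), $B/I\cong\bigoplus_gR/RR_{g^{-1}}$ gives $\GKdim(B/I)=\max_g\GKdim(R/RR_g)$, hence $\Pty(R,H)=d-\max_g\GKdim(R/RR_g)$; and $R/\bigcap_gRR_g$ surjects onto each $R/RR_g$ and embeds into $\bigoplus_gR/RR_g$, so $\GKdim(R/\bigcap_gRR_g)=\max_g\GKdim(R/RR_g)$ too. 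The two displayed inequalities of (6) follow at once.

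\emph{Parts (5), (7), and the main obstacle.} For (7): the hypothesis says $R_{\geq s}\subseteq\bigcap_gRR_g$, so every $R/RR_g$ is finite dimensional, $\max_g\GKdim(R/RR_g)=0$, and by (6) $\Pty(R,H)=d$. For (5), I will need $\GKdim(R/RR_g)\leq d-1$ for all $g$: this is clear for $g=1$ (where $RR_1=R$), and for $g\neq1$ the subspace $R_g$ contains a nonzero homogeneous element $a$ of positive $\mathbb{Z}$-degree (as $R_0=\Bbbk\subseteq R_1$); since $R$ is a domain, left multiplication by $a$ gives a graded isomorphism $R(-\deg a)\xrightarrow{\ \sim\ }Ra\subseteq RR_g$, so by the standard Hilbert-function degree drop $\GKdim(R/RR_g)\leq\GKdim(R/Ra)\leq d-1$. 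Hence $\Pty(R,H)=d-\GKdim(B/I)\geq1$, and then Lemma~\ref{xxlem3.10}(4) gives that $R\#H$ is prime and Lemma~\ref{xxlem3.10}(2) that $R\#H\to\End_{R^H}(R)$ is injective. I expect the substantive steps to be the identification of $I$ (a short but delicate smash-product computation, where keeping the group elements and their inverses straight matters) and the degree drop $\GKdim(R/Ra)\leq d-1$, which is where local finiteness and finiteness of $\GKdim R$ are genuinely used; the rest is bookkeeping with \eqref{E1.5.1} and Lemma~\ref{xxlem3.10}.
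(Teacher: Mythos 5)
Your proposal is correct and follows the same basic strategy as the paper (dual-basis computation with the idempotents $p_g$ in $R\# H$, reducing everything to the left ideals $RR_g$), but it is organized around a sharper computation than the paper performs. The paper only establishes the containment $RR_g\# p_{g^{-1}}\subseteq I$, deduces (3) from it, and then gets (6) indirectly via Lemma~\ref{xxlem5.2} and the fact that the image of $R$ in $B/I$ is a quotient of $R/\bigcap_g RR_g$; you instead identify $I=BeB=\bigoplus_{g\in G}RR_{g^{-1}}\# p_g$ exactly, which upgrades (3) to an equivalence and (6) to the equality $\Pty(R,H)=d-\max_g\GKdim(R/RR_g)$, from which (7) is immediate. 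Your proof of (2) is also different and arguably cleaner: the paper argues that every $R_g$ is infinite dimensional, whereas you use multiplication by nonzero elements of $R_{gh^{-1}}$ to show all $R_g$ have equal dimension, which is exactly what freeness over $\Bbbk^{G}$ requires. For (5) the paper produces a single nonzero element of $\bigcap_g RR_g$ via part (4) and then quotes (implicitly) that a graded domain modulo a nonzero ideal loses at least one from its GK-dimension; you instead bound each $\GKdim(R/RR_g)$ by $d-1$ using a positive-degree homogeneous element of $R_g$ -- the same underlying ``degree-drop'' fact, used at the same level of detail as the paper, and both versions (like the paper's) tacitly work in the noetherian locally finite setting of Hypothesis~\ref{xxhyp3.2} in order to run the Hilbert-function bookkeeping via \eqref{E1.5.1} and to invoke Lemma~\ref{xxlem3.10}(2,4). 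One trivial slip: to identify $Ra$ with a shift of $R$ you should use right multiplication $r\mapsto ra$ (left multiplication by $a$ has image $aR$); this does not affect the argument.
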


\begin{proof} (1) Let $X=\{g\in G \mid R_g\neq 0\}$. The Hopf ideal 
associated with $X$ is $J=\{f\in(\Bbbk G)^\circ|f(X)=0\}$. Since $R$ is a
domain, $X$ is a subsemigroup of $G$. Since $X$ is finite, it is a
group. By the inner faithfulness $J=0$, and hence $X=G$.

(2) We may assume $R\neq \Bbbk$. Since $R$ is a
connected graded domain, $R$ is infinite dimensional. As a
consequence, $R^H$ is infinite dimensional. So each $R_g$ is infinite
dimensional. So $R$ is a free $H$-module of infinite rank.

(3)
Let $\{g_1, \cdots, g_m\}$ be the set $G$ where
$g_1$ is the unit of $G$. The dual basis $\{p_{g_1},\cdots, p_{g_m}\}$
is the complete set of orthogonal idempotents of $H$ that forms
a $\Bbbk$-linear basis of $H$. The integral of $H$ is $p_{g_1}$ and
$\sum_{i=1}^m p_{g_{i}}$ is the algebra unit of $H$. The coproduct
of $H$ is determined by
$$\Delta(p_{g})=\sum_{h\in G} p_{h}\otimes p_{h^{-1}g}$$
for all $g\in G$.
Let $f\in R$ be a $G$-homogeneous element of degree $g\in G$.
Then
$$e( f\# 1)=(1\# p_{g_1}) ( f\# 1)=\sum_{h} p_{h}(f)\# p_{h^{-1}}
=p_g(f)\# p_{g^{-1}}=f\# p_{g^{-1}}.$$
This implies that
$$f\# p_{g^{-1}}\in I$$ for all $f\in R_g$.
Since $I$ is a left $R$-module, we have
$$RR_{g} \# p_{g^{-1}}\subseteq I$$
for all $g\in G$. If $f\in \bigcap_{g\in G} RR_g$, then $f\# p_{g^{-1}}\in I$
for all $g\in G$. Then $f\# 1=\sum_{g\in G} f\# p_{g^{-1}}\in I$.

(4) Let $f=f_1\cdots f_m$. Then $f_{w}\cdots f_{m}
\in R_{h_w}$ and $f\in RR_{h_w}$ for all $w$.
By hypothesis $\{h_w\}^{m}_{w=1}$ equals $G$. Then $f\in \bigcap_{g\in G}
RR_g$. The assertion follows by part (3).

(5) Assume $|G|=m$ and write $G=\{g_1,\dots,g_{m-1},g_m=e\}$. By Part (1), 
we may choose $0\neq f_m\in R_e$ and $0\neq f_i\in R_{g_ig^{-1}_{i+1}}$ 
for $i=1,\dots,m-1$. Then $f_1,\dots,f_m$ satisfy the condition in part (4). 
Since $R$ is a domain, $f:=f_1\cdots f_m\neq 0$. By part (4), $f\#1\in I$.
Then $B/(I)$ has GK-dimension no
more than the $\GKdim R-1$, or equivalently, $\Pty(R,H)\geq 1$.
The assertion follows by Lemma \ref{xxlem3.10}(2,4).

(6) This follows from Lemma \ref{xxlem5.2} below and
the fact that the image of $R\to R\# H/(I)$, denoted by
$R'$, is a quotient of $R/(\bigcap_{g\in G} RR_g)$, by part (3),
and that there is an embedding $R/(\bigcap_{g\in G} RR_g)\to
\bigoplus_{g\in G} R/(RR_g)$.

(7) This is an immediate consequence of (6).
\end{proof}

Let $T$ be a subalgebra of $R$ such that $R_T$ and $_TR$ are f.g..

\begin{lemma}
\label{xxlem5.2}
Let $R'$ be the image of the map $R\to B\to B/I$ and $T'$ be the
image of $T$ in $R'$. Then $\GKdim T'=\GKdim R'=\GKdim B/I.$
\end{lemma}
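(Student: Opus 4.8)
The plan is to trap $\GKdim B/I$ between $\GKdim T'$ from below --- which is automatic --- and $\GKdim T'$ from above, the latter coming from the observation that $B/I$ is module-finite over $T'$.

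Since $T'\subseteq R'\subseteq B/I$ are graded algebras, the inequalities $\GKdim T'\le\GKdim R'\le\GKdim B/I$ are immediate from Definition \ref{xxdef1.1}, so the whole statement reduces to the reverse inequality $\GKdim B/I\le\GKdim T'$. To establish it I would first record that $B=R\# H$ is free of rank $\dim_\Bbbk H$ as a left $R$-module: for a $\Bbbk$-basis $\{h_i\}$ of $H$ one has $B=\bigoplus_i R\cdot(1\# h_i)$, since $(r\#1)(1\# h_i)=r\# h_i$. As the quotient map $B\to B/I$ is left $R$-linear (via $R\hookrightarrow B$), the module $B/I=\sum_i R'\cdot\overline{1\# h_i}$ is finitely generated over $R'$, the image of $R$ in $B/I$. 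Next, by hypothesis $R=\sum_{j=1}^{p}Tr_j$ for some homogeneous $r_j$; applying the ring homomorphism $R\to B/I$, which has image $R'$ and carries $T$ onto $T'$, gives $R'=\sum_{j=1}^{p}T'\,\overline{r_j}$, so $R'$ is finitely generated as a left $T'$-module. Composing these two facts shows that $B/I$ is finitely generated as a left $T'$-module.

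Finally I would deduce $\GKdim B/I\le\GKdim T'$ from this finiteness. One clean route is to invoke the standard invariance of GK-dimension under finite ring extensions (cf. \cite{KL}). Staying closer to the tools used in the paper, one can instead argue directly from \eqref{E1.5.1}: all of $T',R',B/I$ are connected graded and locally finite, so their GK-dimensions are read off their Hilbert series; picking homogeneous left $T'$-module generators $c_1,\dots,c_m$ of $B/I$ of degrees $d_1,\dots,d_m\ge 0$, one has $(B/I)_n=\sum_{\ell}T'_{n-d_\ell}c_\ell$, hence $\sum_{j\le k}\dim(B/I)_j\le m\sum_{j\le k}\dim T'_j$ for every $k$, and \eqref{E1.5.1} then yields $\GKdim B/I\le\GKdim T'$. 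Combined with the chain of the previous paragraph this forces $\GKdim T'=\GKdim R'=\GKdim B/I$.

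The only substantive point is the module-finiteness of $B/I$ over $T'$, so that is where I would expect any care to be needed; but it is soft, resting only on $B$ being free over $R$ and on the standing hypothesis that $R$ is module-finite over $T$ --- everything else is bookkeeping with graded Hilbert series.
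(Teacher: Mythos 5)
Your argument is correct and is essentially the paper's own proof, just written out in full: the paper disposes of the lemma with the single observation that $B/I$ is finitely generated over $R'$ (hence over $T'$, since $_TR$ is finitely generated), and then invokes the invariance of GK-dimension under finite module extensions, exactly your main route via \cite{KL}. The extra Hilbert-series bookkeeping is a fine alternative finish (modulo the small slip that $B/I$ is locally finite but not connected graded, which is all that \eqref{E1.5.1} needs anyway).
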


\begin{proof} This follows from the fact that $B/I$ is f.g.
 over $R'$.
\end{proof}

Aiming for a proof of Theorem \ref{xxthm0.5}, we now let $R$ be the
$(-1)$-skew polynomial ring $\Bbbk_{-1}[x_1,\cdots,x_n]$. Let $H$ be the
group algebra $\Bbbk W$ where $W$ is generated by the permutation
$\sigma: x_i\to x_{i+1}, x_{n}\to x_1$
for all $1\leq i\leq n-1$. Theorem \ref{xxthm0.5} asserts that
$R\# \Bbbk W\cong \End_{R^W}(R)$.

Let $\xi$ be a fixed primitive $n$th root of unity.
We might assume that $\xi\in \Bbbk$ since $\Pty(R,H)$
is preserved by field extensions. Let $y_j=
\sum_{i=1}^{n} \xi^{ij} x_i$, for $j=0,1,\dots,n-1$. Then $\{y_0,\cdots, y_{n-1}\}$
is a basis of $R_1$. It is clear that $\sigma(y_j)=\xi^{-j} y_j$.
Let $T$ be the central subalgebra of $R$
generated by $\{x_i^2\}_{i=1}^{n}$ and let $Y_j=\sum_{i=1}^{n} \xi^{ij} x_i^2$ for $j=0,1,\dots,n-1$.
Then $\{Y_i\}_{i=0}^{n-1}$ is a basis of the vector space
$\oplus_{i=1}^n \Bbbk x_i^2$. It is clear that $R$ is a f.g. module
over the central subalgebra $T$.

The group algebra $\Bbbk W$ can be viewed as a $(\Bbbk G)^\circ$ where $G$ is the
character group of $W$. Note that $G\cong W$ as abstract groups. Let
$g$ be a generator of $G$ such that $\deg_G (y_1)=g$. Then $\deg_G (y_i)=g^i$
for all $0\leq i\leq n-1$.

Let $U_{n}=\{(0,1), (1,2), \cdots, (\lfloor \frac{n}{2} \rfloor-1,
\lfloor \frac{n}{2} \rfloor)\}$. For any integer $i$,
let $\overline{i}$ be the integer between 0 and $n-1$ such that
$\overline{i}=i\mod n$. The following lemma is easy to check.

\begin{lemma}
\label{xxlem5.3} Retain the above notation for $R$. Let $i,j,k$ be integers
{\rm{(}}between $0$ and $n-1$ modulo $n${\rm{)}}.
\begin{enumerate}
\item[(1)]
$y_iy_j+y_jy_i=2Y_{i+j}=2Y_{\overline{i+j}}$.
\item[(2)]
$y_k y_j y_i=-y_k y_i y_j+(y_iy_j+y_jy_i)y_k$.
\item[(3)]
$R'/(Y_i\mid i=0,\cdots,n-1)$ is finite dimensional.
\item[(4)]
If $n$ is even and $i\geq \frac{n}{2}$, then $y_i^2=y_{j}^2$ where $j=i-\frac{n}{2}$. 
\item[(5)]
Assume $n$ is even.
If $i<j$ and $(i,j)\notin U_n$
$$y_j y_i=\begin{cases}
-y_iy_j+2 y_s^2 & i+j={\text{even and $<n$}}, \; s=\frac{i+j}{2},\\
-y_iy_j+2 y_s^2 & i+j={\text{even and $\geq n$}}, \; s=\frac{i+j-n}{2},\\
-y_iy_j+(y_s y_{s+1}+y_{s+1}y_s) & i+j={\text{odd and $<n$}}, \; s=\frac{i+j-1}{2},\\
-y_iy_j+(y_s y_{s+1}+y_{s+1}y_s) & i+j={\text{odd and $>n$}}, \; s=\frac{i+j-1-n}{2}.
\end{cases}$$
\item[(6)]
Suppose $n=2^d$.
For each $s$, $(y_sy_{s-1})^n=0$ in $R'$, and
hence in $R'':=R'/(y_i^2\mid i=0,\cdots, \frac{n}{2})$.
\item[(7)]
Suppose $n=2^d$. Then
$R''$ is finite dimensional.
\item[(8)]
Suppose $n=2^d$.
Then $y_s^{2(d+1)}=0$ in $R'$ for all $s$.
\end{enumerate}
\end{lemma}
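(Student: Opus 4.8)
\emph{Overview.} Parts (1), (2), (4), (5) are identities in $R=\Bbbk_{-1}[x_1,\dots,x_n]$, while (3), (6), (7), (8) combine such identities with Lemma~\ref{xxlem5.1}, which transports them to $R'$. The work is concentrated in (6)--(8).

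\emph{Parts (1)--(5).} For (1), expand $y_iy_j+y_jy_i=\sum_{a,b}\xi^{ai+bj}(x_ax_b+x_bx_a)$ and use $x_ax_b+x_bx_a=0$ for $a\neq b$ and $=2x_a^2$ for $a=b$; since $\xi^n=1$ this is $2Y_{\overline{i+j}}$. Part (2) is immediate since $2Y_{\overline{i+j}}$ lies in the central subalgebra $T$. Part (4) is (1) with $i=j$ (here $\ch\Bbbk\nmid 2$), together with $\overline{2i}=\overline{2(i-\tfrac n2)}$. For (5), rewrite $y_jy_i=-y_iy_j+2Y_{\overline{i+j}}$ and express $Y_{\overline{i+j}}$ via (1): it equals $y_s^2$ with $\overline{i+j}=2s$ when $i+j$ is even, and $\tfrac12(y_sy_{s+1}+y_{s+1}y_s)$ with $\overline{i+j}=2s+1$ when it is odd; sorting by the parity of $i+j$ and by whether $i+j<n$ gives the four cases, the excluded pairs $U_n$ being precisely those where no genuine reduction occurs. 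For (3), $(Y_0,\dots,Y_{n-1})$ and $(x_1^2,\dots,x_n^2)$ generate the same ideal of $R$, and $R/(x_1^2,\dots,x_n^2)$ is finite dimensional, hence so is $R'/(Y_0,\dots,Y_{n-1})$.

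\emph{Parts (6) and (7).} For (6), write $(y_sy_{s-1})^n=f_1\cdots f_{2n}$ in the two letters $y_s,y_{s-1}$ and compute the trailing products $h_w=\deg_G(f_w)\cdots\deg_G(f_{2n})$ from Lemma~\ref{xxlem5.1}(4): those with $w$ odd have $G$-degree $g^{k(2s-1)}$, $k=1,\dots,n$, and since $n=2^d$ and $2s-1$ is odd these already run over all of $G$; so Lemma~\ref{xxlem5.1}(4) gives $(y_sy_{s-1})^n\#1\in I$. (The same computation shows $(y_ay_b)^n=0$ in $R'$ whenever $a+b$ is odd, since then $\gcd(a+b,n)=1$.) For (7): in $R''$ every $y_i^2$ vanishes, so by (4) every even-indexed $Y_j$ maps to $0$; for odd $j=\overline{2s+1}$ one has $2Y_j=y_sy_{s+1}+y_{s+1}y_s$, and expanding, using $y_s^2=y_{s+1}^2=0$ in $R''$ and the odd-sum form of (6), gives $(2Y_j)^n=(y_sy_{s+1})^n+(y_{s+1}y_s)^n=0$. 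Hence the image $T''$ of $T$ in $R''$ is a finitely generated commutative algebra generated by nilpotents, so it is finite dimensional; since $R$ is a finite $T$-module, $R''$ is a finite $T''$-module, hence finite dimensional.

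\emph{Part (8).} By (1), $y_s^2=Y_{\overline{2s}}$ is central, so $y_s^{2(d+1)}=Y_{\overline{2s}}^{\,d+1}$, and by Lemma~\ref{xxlem5.1}(3) it is enough to show $Y_{\overline{2s}}^{\,d+1}\in\bigcap_{g\in G}RR_g$. The plan is to write this element as a product of $G$-homogeneous elements to which Lemma~\ref{xxlem5.1}(4) applies (re-bracketing via the relations of (1) where needed), exploiting: the two \emph{central} square roots $y_s^2=y_{s+n/2}^2$ of $Y_{\overline{2s}}$ (part (4)), of $G$-degrees $g^s$ and $g^{s+n/2}$; the straightening relations of (1); and, when $\overline{2s}=0$ (so $Y_0$ is $\sigma$-fixed), the extra representation $Y_0=\tfrac12(y_1y_{n-1}+y_{n-1}y_1)$. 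Because $n=2^d$ the relevant $G$-degrees behave well under these substitutions and $d+1$ factors suffice to make the trailing products sweep out $\Z/2^d\Z$. A likely cleaner route is induction on $d$: the substitution $x_i^2\mapsto x_i^2+x_{i+n/2}^2$ identifies $T$ with its $\sigma$-action with the analogous $\tfrac n2$-variable (commutative) problem and carries $Y_{\overline{2s}}$ to the corresponding generator, while the base case $d=1$ is the identity $y_s^4=Y_0^2=y_0^2y_1^2$ with $y_0^2y_1^2\#1\in I$ by Lemma~\ref{xxlem5.1}(4). The step I expect to be the real obstacle is exactly this last one --- verifying that the chosen product (or the inductive reduction) covers every $g\in G$, and handling $s\equiv0,\tfrac n2\pmod n$ uniformly; the rest, namely (1), centrality, and Lemma~\ref{xxlem5.1}, is routine.
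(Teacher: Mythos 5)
Your parts (1)--(6) are correct and essentially follow the paper's route; in particular for (6) the paper likewise notes that $\deg_G(y_sy_{s-1})$ is an odd power of $g$, hence a generator of $G$ when $n=2^d$, and quotes Lemma \ref{xxlem5.1}(4). Your argument for (7) is correct but genuinely different from the paper's: the paper fixes a monomial order and runs a Diamond-lemma style bound on the length of reduced monomials in $R''$, whereas you observe that all $y_i^2$ vanish in $R''$, so every even-indexed $Y_j$ is zero, while for odd $j=\overline{2s+1}$ the cross terms $ab$ and $ba$ with $a=y_sy_{s+1}$, $b=y_{s+1}y_s$ vanish in $R''$, giving $(2Y_j)^n=a^n+b^n=0$ by the degree argument of (6); hence the image $T''$ of the central subalgebra $T$ is a finitely generated commutative algebra generated by nilpotents, so finite dimensional, and $R''$ is a finite $T''$-module. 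That is a clean and valid alternative to the combinatorial argument.

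The genuine gap is part (8), which is the heart of the lemma (it is what yields $\GKdim R'=0$ in Proposition \ref{xxpro5.4} and hence Theorem \ref{xxthm0.5}), and you explicitly leave its key step unproved. Your first plan --- factor $y_s^{2(d+1)}$ as a product of $G$-homogeneous elements and invoke Lemma \ref{xxlem5.1}(4) --- cannot work as stated: the factors are all copies of $y_s$, of degree $g^s$, so for even $s$ the trailing degrees never exhaust $G$, and re-bracketing via (1) replaces the element by a \emph{sum} of products, to which Lemma \ref{xxlem5.1}(4) no longer applies termwise. The actual argument (the paper's) is to fix $i$, reduce via Lemma \ref{xxlem5.1}(3) to showing $y_s^{2(d+1)}\in RR_{g^i}$, use $2y_s^2=y_iy_{2s-i}+y_{2s-i}y_i\equiv y_iy_{2s-i} \pmod{RR_{g^i}}$ so that one summand dies, and then run an induction (using centrality of $y_s^2$) producing $(2y_s)^{2w}\equiv y_{j_1}\cdots y_{j_w}y_{i_w}\cdots y_{i_1}$ with $i_\alpha=2^\alpha s-2^{\alpha-1}i$ and $j_\alpha=-2^\alpha s+2^{\alpha-1}i+2s$; after $w=d$ doubling steps one checks $\deg_G(y_s^2y_{i_d}\cdots y_{i_1})=g^{i}$, so the whole expression is $\equiv 0$ and $y_s^{2(d+1)}\in RR_{g^i}$ for every $i$. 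This per-$i$ congruence with $i$-dependent factors is exactly the verification you flag as ``the real obstacle,'' and you do not supply it. Your alternative plan (induction on $d$ via $x_i^2\mapsto x_i^2+x_{i+n/2}^2$) is also unsubstantiated: the statement concerns membership in the one-sided ideals $RR_{g^i}$ of the noncommutative ring $R$, not an intrinsic property of $(T,\sigma)$, and no argument is given that the $\frac{n}{2}$-variable problem controls it; only the base case $n=2$ is checked. As it stands, (8) is missing, and with it the lemma's main conclusion.
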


\begin{proof}
(1) By a direct computation.

(2) This follows from part (1) and the fact $Y_{\overline{i+j}}$
is central in $R$.

(3) This follows from Lemma \ref{xxlem5.2} and the fact
$T/(Y_i\mid i=0,\cdots,n-1)=k$.

(4) Follows from the computations in the proof of part (1).

From now on we assume that $n$ is even.

(5) It follows from part (1).

(6) Since $\deg_G(y_s y_{s-1})$ is an odd power of $g$, it is a 
generator of $G$, and hence the assertion follows from Lemma 
\ref{xxlem5.1}(4).

(7) We use Diamond lemma.
Fix an order $y_0 < y_1 < \cdots < y_{n-2}< y_{n-1}$ in $R''$, and
extend it to the lexicographical order on all monomials.

By part (5), for any $j>i$, $y_j y_i$ can be written
as a linear combination of lower terms. By part (4),
$y_i^2$ when $i\geq \frac{n}{2}$ can be expressed as 
a lower term. By part (2), if $k>j>i$, $y_ky_jy_i$
can be written as a linear combination of lower terms.

Let $X$ be a reduced monomial basis of $R''$, which means 
that if $X'$ is another monomial basis of $R''$, then, 
for any monomial $f\in X$, there is a monomial $f'\in X'$ 
with the same length as $f$ such that either $f=f'$ or $f$ 
is lower than $f'$.
Let $0\neq f=y_{i_1}y_{i_2}\cdots y_{i_h}\in X$.
Then $i_\alpha \neq i_{\alpha+1}$ since $y_i^2=0$ in $R''$ for all $i$.
Since there are only $n$ possible $i_\alpha$,
the maximal length of increasing $i_\alpha$ is at most $n$. 
If $i_\alpha> i_{\alpha+1}$, then, only the third or forth case 
of part (5) may occur, and hence $i_\alpha\leq \frac{n}{2}$
and $i_{\alpha+1}=i_\alpha-1$, namely, $(i_{\alpha+1},i_{\alpha})\in U_n$.

We next prove that there are at most $\frac{1}{2} n^2$
pairs $(i_{\alpha+1},i_\alpha)\in U_n$ appeared in $f$. 
Suppose we have two pairs
$(i_{\alpha+1},i_\alpha)$ and
$(i_{\beta+1},i_\beta)$ in $U_n$ with $\alpha<\beta$.
By part (2), $\beta\neq \alpha+1$.
If $\beta> \alpha+2$, then $i_s$ are strictly increasing
between $i_{\alpha+1}$ and $i_\beta$ (up to taking $\beta$ minimal). 
So $i_\beta\geq i_{\alpha}+1$. If $\beta=\alpha+2$,
then $i_\beta\geq i_{\alpha}$. Now let $Y_{i}=y_{i}y_{i-1}$ 
for all $i\leq \frac{n}{2}$. By part (6), $Y_i^n=(y_{i}y_{i-1})^{n}=0$
in $R''$. The above argument shows that $f$ is of the form
$$y_{i_1}^{d_1} Y_{i_2}^{e_1} y_{i_3}^{d_2} Y_{i_4}^{e_2}
\cdots  y_{i_{2w-1}}^{d_w} Y_{i_{2w}}^{e_w}$$
where 
$$i_1<i_2\leq i_3< i_4\leq i_5\cdots$$
and where $0\leq d_s\leq 1$ and $0\leq e_s\leq n-1$. 
By definition, $1\leq i_2<i_4<\cdots \leq \frac{n}{2}$.
Therefore there are at most $\frac{n}{2} \cdot n$ pairs 
of $(i_{\alpha+1},i_\alpha)\in U_n$ appeared in $f$. 
Consequently, the degree of $f$ is uniformly bounded. 
This means that $X$ is finite and $R''$ has finite dimension.

(8) By part (4), we may assume $s<\frac{n}{2}$. If $s$ is odd
(which is coprime to $n$), then, by Lemma \ref{xxlem5.1}(4),
$y_s^{n}=0$ in $R'$.

Now we consider general $s$. By Lemma \ref{xxlem5.1}(3), it
suffices to show that $y_s^{2(d+1)}\in RR_{g^i}$ for any
$i=1,\cdots, n-1$. Note that $y_s^{2(d+1)}\in R=RR_{g^0}$ is clear.
Fix $i$ and write $\equiv$ to be $=\mod RR_{g^i}$. We start
with $y_i\equiv 0$. Then
\begin{equation}
\label{E5.3.1}\tag{E5.3.1}
2y_s^2=y_{i}y_{2s-i}+y_{2s-i}y_{i}\equiv y_{i}y_{2s-i}.
\end{equation}
We prove by induction that, for each $w$,
\begin{equation}
\label{E5.3.2}\tag{E5.3.2}
(2y_s)^{2w}\equiv y_{j_1} y_{j_2}\cdots y_{j_w} y_{i_w}
\cdots y_{i_2} y_{i_1}
\end{equation}
where
$$i_\alpha=2^\alpha s - 2^{\alpha-1} i, \quad {\text{and}}\quad
j_\alpha=-2^{\alpha} s + 2^{\alpha-1} i +2s.$$
When $w=1$, this is \eqref{E5.3.1}. Suppose \eqref{E5.3.2}
holds for $w-1\geq 1$. Then
$$\begin{aligned}
(2y_s)^{2w}&=(2y_s^2) (2y_s)^{2(w-1)}
\equiv (2y_s^2) y_{j_1} y_{j_2}\cdots y_{j_{w-1}} y_{i_{w-1}}
\cdots y_{i_2} y_{i_1}\\
&\equiv y_{j_1} y_{j_2}\cdots y_{j_{w-1}} (2y_s^2) y_{i_{w-1}}
\cdots y_{i_2} y_{i_1}\\
&\equiv  y_{j_1} y_{j_2}\cdots y_{j_{w-1}} (y_{i_w} y_{j_w}+
y_{j_w}y_{i_w})y_{i_{w-1} \cdots y_{i_2} y_{i_1}}\\
&\equiv  y_{j_1} y_{j_2}\cdots y_{j_{w-1}}
(y_{j_w}y_{i_w})y_{i_{w-1}} \cdots y_{i_2} y_{i_1}
\end{aligned}
$$
since $y_s^2$ is central in $R$ and $y_{j_w}y_{i_{w-1}} \cdots y_{i_2} y_{i_1}\in R_{g^i}$.
Therefore \eqref{E5.3.1} follows from the induction.
Now take $w=d$, we have
$$\deg_G(y_{i_d}\cdots y_{i_1})=g^{(2^{d+1}-2)s-(2^d-1)i}=g^{-2s+i}$$
which implies that
$\deg_G(y_s^2y_{i_d}\cdots y_{i_1})=g^{(2^{d+1}-2)s-(2^d-1)i}=g^{i}$
and
$$
\begin{aligned}
(2y_s)^{2(d+1)}&\equiv (2 y_s^2) y_{j_1} y_{j_2}\cdots y_{j_{d-1}}
y_{j_d}y_{i_d}y_{i_{d-1}} \cdots y_{i_2} y_{i_1}\\
&\equiv 2y_{j_1} y_{j_2}\cdots y_{j_{d-1}}
y_{j_d} (y_s^2 y_{i_d}y_{i_{d-1}} \cdots y_{i_2} y_{i_1})
\equiv 0.
\end{aligned}
$$
Since ${\text{char}}\; \Bbbk\nmid 2n$, $y_s^{2(d+1)}\in RR_{g^i}$ for all
$i$ as desired.
\end{proof}

\begin{proposition}
\label{xxpro5.4}
Let $R$ be the $(-1)$-skew polynomial ring $\Bbbk_{-1}[x_1,\cdots,x_n]$
for $n\geq 2$. Let $H$ be the group algebra $\Bbbk W$ where $W$ is
generated by the permutation $\sigma: x_i\to x_{i+1}, x_{n}\to x_1$
for all $1\leq i\leq n-1$.
Suppose $n=2^d$ for some $d\geq 1$.
\begin{enumerate}
\item[(1)]
$\Pty(R,H)=\GKdim R=n\geq 2$.
\item[(2)]
$\qgr_0 R\# H\cong \qgr_0 R^H$.
\item[(3)]
$R^H$ has graded isolated singularities in the sense of
\cite[Definition 2.2]{Ue}.
\end{enumerate}
\end{proposition}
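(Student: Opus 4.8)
The plan is to establish part (1), that $\Pty(R,H)=n$, and then deduce (2) and (3) formally. Write $B=R\#H$, $I=(1\#\inth)$, and let $R'\subseteq B/I$ be the image of the algebra map $R\to B\to B/I$; as a graded quotient of the noetherian algebra $R$ it is a connected graded noetherian algebra. Since $B$ is a finitely generated right $R$-module, $B/I$ is finitely generated as a right $R'$-module, so Lemma \ref{xxlem5.2} applied with $T=\Bbbk[x_1^2,\dots,x_n^2]$ (over which $R$ is module-finite on both sides) gives $\GKdim B/I=\GKdim R'$. Hence $\Pty(R,H)=n-\GKdim R'$. Since $\Pty(R,H)\le\GKdim R=n$ always (and $\ge1$ by Lemma \ref{xxlem5.1}(5), so $B/I$ and $R'$ are nonzero of finite GK-dimension), part (1) is equivalent to $\GKdim R'=0$, i.e.\ to $\dim_\Bbbk R'<\infty$.

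To prove $R'$ is finite dimensional I would assemble the facts in Lemma \ref{xxlem5.3}. Specializing Lemma \ref{xxlem5.3}(1) to $i=j=s$ gives $2y_s^2=2Y_{\overline{2s}}$, so (using $\ch\Bbbk\ne2$) each $y_s^2$ is central in $R$, hence its image is central in $R'$; and by Lemma \ref{xxlem5.3}(8) we have $(y_s^2)^{d+1}=y_s^{2(d+1)}=0$ in $R'$. Therefore the two-sided ideal $\mathfrak a$ of $R'$ generated by $y_0^2,y_1^2,\dots,y_{n/2}^2$ is generated by central nilpotent elements, and a pigeonhole count shows $\mathfrak a^{(n/2+1)(d+1)}=0$. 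By Lemma \ref{xxlem5.3}(7), $R'/\mathfrak a=R''$ is finite dimensional; since $R'$ is noetherian, each quotient $\mathfrak a^{j}/\mathfrak a^{j+1}$ is a finitely generated $R''$-module, hence finite dimensional, and summing over the finite filtration by powers of $\mathfrak a$ gives $\dim_\Bbbk R'<\infty$. Thus $\Pty(R,H)=n\ge2$, proving (1).

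For (2): Hypothesis \ref{xxhyp3.2}(1--3) holds here, since $R$ is noetherian connected graded and locally finite and $H=\Bbbk W$ is free of rank $n$ over $\Bbbk$ with integral $\inth=\frac1n\sum_{w\in W}w$ satisfying $\varepsilon(\inth)=1$ (as $\ch\Bbbk\nmid n$). Applying Theorem \ref{xxthm3.7} with $\alpha=\Pty(R,H)=n=\GKdim R$ yields $\qgr_0 R^H=\qgr_{n-\alpha}R^H\cong\qgr_{n-\alpha}R\#H=\qgr_0 R\#H$, which is (2). For (3): $R$ is an iterated Ore extension of $\Bbbk$, hence AS regular of dimension $n$; together with $\Pty(R,H)=\GKdim R$ just proved, Corollary \ref{xxcor3.8} applies and says exactly that $R^H$ has graded isolated singularities in the sense of \cite[Definition 2.2]{Ue}. (Equivalently, by (2) the category $\qgr_0 R^H$ is equivalent to $\qgr_0 R\#H$, which has finite global dimension because $R\#H$ does.)

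The substantive input --- Lemma \ref{xxlem5.3}(7) and (8), whose proofs rest on the combinatorics of the $G$-grading and the Diamond lemma --- is already in place before this proposition, so the only genuinely new step is the elementary nilpotent-ideal filtration argument passing from ``$R''$ finite dimensional'' to ``$R'$ finite dimensional''; I expect no real obstacle there.
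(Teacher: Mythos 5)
Your proposal is correct, and for parts (2) and (3) it runs exactly as the paper does (Theorem \ref{xxthm3.7} with $\alpha=n$, respectively Corollary \ref{xxcor3.8}, after reducing everything to $\GKdim R'=0$ via Lemma \ref{xxlem5.2}). The one place you diverge is the final step of part (1), i.e.\ how to pass from Lemma \ref{xxlem5.3}(7)(8) to $\dim_\Bbbk R'<\infty$. The paper argues through minimal primes: for each minimal prime $\mathfrak{p}$ of $R'$, the normal nilpotent element $y_s^2$ must vanish in the prime quotient $R'/\mathfrak{p}$, so $R'/\mathfrak{p}$ is a quotient of $R''$ and hence finite dimensional, and then one assembles the finitely many minimal primes (the paper phrases this via an embedding of $R'$ into $\bigoplus_{\mathfrak{p}}R'/\mathfrak{p}$, which strictly speaking should be applied to $R'$ modulo its nilradical, since the nonzero central nilpotents $y_s^2$ prevent $R'$ itself from being semiprime). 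You instead filter $R'$ by powers of the two-sided ideal $\mathfrak{a}=(y_0^2,\dots,y_{n/2}^2)$: centrality of the $y_s^2$ (from Lemma \ref{xxlem5.3}(1) with $i=j=s$, as in the proof of Theorem \ref{xxthm5.7}(3)) plus $y_s^{2(d+1)}=0$ gives nilpotency of $\mathfrak{a}$ by pigeonhole, and noetherianity of $R'$ makes each layer $\mathfrak{a}^j/\mathfrak{a}^{j+1}$ a finitely generated module over the finite-dimensional $R''$. Your route is slightly more elementary (no prime ideal theory, no semiprimeness issue) and yields $\dim_\Bbbk R'<\infty$ with an explicit nilpotency bound; the paper's route is shorter on the page and generalizes to situations where one only controls prime quotients. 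Both are valid, and the substantive inputs (Lemmas \ref{xxlem5.2} and \ref{xxlem5.3}(7)(8)) are identical.
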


\begin{proof} (1) By Lemma \ref{xxlem5.2}, it suffices
to show that $\GKdim R'=0$. Let ${\mathfrak p}$ be a
minimal prime ideal of $R'$ and let $S=R'/{\mathfrak p}$.
For any $s$, $y_s^2$ is normal in $R'$. By Lemma \ref{xxlem5.3}(8),
$(y_s^2)^{(d+1)}=0$ in $R'$. Since $S=R'/{\mathfrak p}$
is prime, $y_s^2=0$ in $S$. By Lemma \ref{xxlem5.3}(7),
$S$ is finite dimensional (and hence $S\cong \Bbbk$). 
Note that every noetherian algebra has finitely many minimal 
prime ideals \cite[Theorem 3.4]{GW}. As $R'$ is semiprime, 
it can be embedded into the product $\bigoplus_{\mathfrak p}R'/{\mathfrak p}$ 
where ${\mathfrak p}$ runs over all the minimal prime ideals of $R'$. 
Therefore $R'$ is finite dimensional and $\GKdim R'=0$ as desired.

(2) This is a consequence of Theorem \ref{xxthm0.6}.

(3) This follows from the definition and part (2).
\end{proof}

\begin{remark}
\label{xxrem5.5} We have some comments concerning
the above computation.
\begin{enumerate}
\item[(1)]
It is a little bit surprising that if $n=2^d$, then $\Pty(R,H)=
\GKdim R$. However conjecture \ref{xxcon0.9} would
give an explanation of this phenomena.
\item[(2)]
We conjecture that when $n\neq 2^d$, then $\Pty(R,H)<\GKdim R$
and $R^H$ does not have graded isolated singularities. It would be
nice if there is a formula for $\dim_{sing}(R^H)$ in terms of
$n$.
\end{enumerate}
\end{remark}

In practice, we assume that $H$ acts on $A$ inner faithfully.
When $R$ is an AS regular domain, we conjecture that $\Pty(R,H)\geq 1$,
see Lemma \ref{xxlem3.10}.
By Theorem \ref{xxthm3.5}, when $\Pty(R,H)\geq 2$, then the Auslander
theorem holds. By Lemma \ref{xxlem2.3}, when $\Pty(R, H)=\alpha$
(or more generally $\Pty(R,H)\geq \alpha$),
then $\qgr_{d-\alpha} R\# H\cong \qgr_{d-\alpha} R^H$ where
$d=\GKdim R$. Since $R\# H$ is regular, this equivalence gives some
bounds on the dimension of singular locus of $R^H$, see \eqref{E0.6.1}. 
Therefore $\Pty(R,H)$ relates several other properties and invariants
of the $H$-action on $R$.

The definition of reflection number $\Rpf(R,G)$ is given
in Definition \ref{xxdef0.8}.
Recall that $G$ is called {\it conventionally small}
if $\Rpf(R,G)\geq 2$. If the
Conjecture \ref{xxcon0.9} holds, then Auslander theorem holds
for classical small groups by Theorem \ref{xxthm3.5}.

\begin{remark}
\label{xxrem5.6}
\begin{enumerate}
\item[(1)]
By definition, $\Rpf(R,G')\geq \Rpf(R,G)$ if
$G'$ is a subgroup of $G$.
\item[(2)]
Suggested by part (1), we conjecture that
$\Pty(R,H')\geq \Pty(R,H)$ if $H'$ is a
Hopf subalgebra of $H$ (assuming $H'$-action
on $R$ is still inner faithful).
\item[(3)]
It is not clear to us if there is any
relationship between $\dim_{sing}(R^{H'})$
and $\dim_{sing}(R^H)$. Suggested by part (2),
we would guess that $\dim_{sing}(R^{H'})\leq
\dim_{sing}(R^H)$, but we don't have any
result to support the claim.
\end{enumerate}
\end{remark}

We can compute some bounds for $\Pty(R,G)$ and $\Rpf(R,G)$ in
the following situation.

\begin{theorem}
\label{xxthm5.7}
Let $R$ be the $(-1)$-skew polynomial ring $\Bbbk_{-1}[x_1,\cdots,x_n]$
for $n\geq 2$. Let $H$ be the group algebra $\Bbbk W$ where $W$ is
generated by the permutation $\sigma: x_i\to x_{i+1}, x_{n}\to x_1$
for all $1\leq i\leq n-1$.
\begin{enumerate}
\item[(1)]
If $n=2^d$, then $\Pty(R,W)=n$.
\item[(2)]
$\Rpf(R, W)=\begin{cases} n, & n=2^d\\
n(1-\frac{1}{p}), & {\text{$p>1$ is the minimal odd prime factor of $n$}}.
\end{cases}$
\item[(3)]
$\Pty(R, W)\geq \begin{cases}
\phi(n), & 4\nmid n\\
\frac{\phi(n)}{2}, & 4|n,
\end{cases}$\\
where $\phi(n):=n \prod_{{\text{all primes $p\mid n$}}}
(1-\frac{1}{p})$ is the Euler's totient function.
\item[(4)]
If $n=p^d$ where $p$ is a prime number, then
$$\Pty(R,W) \geq \Rpf(R,W) =
\begin{cases} n,& p=2,\\
n(1-\frac{1}{p}), & p>2.
\end{cases}$$
As a consequence,
Conjecture {\rm{\ref{xxcon0.9}}} holds in this case.
\end{enumerate}
\end{theorem}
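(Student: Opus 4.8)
Part (1) is precisely Proposition~\ref{xxpro5.4}(1), and part (4) will follow formally once (1)--(3) are in hand, so the real work is in (2) and (3). For (2) I plan to compute the trace function $Tr(\sigma^{k},t)$ for each $1\le k\le n-1$ directly in the monomial basis $\{x_1^{a_1}\cdots x_n^{a_n}\}$ of $R$. Since $\sigma^{k}$ cyclically relabels the variables, $\sigma^{k}(x_1^{a_1}\cdots x_n^{a_n})=\varepsilon\,x_1^{a_{1-k}}\cdots x_n^{a_{n-k}}$ with indices read mod $n$ and a sign $\varepsilon=\pm1$, so the only monomials contributing to the trace are those whose exponent sequence has period $m:=\gcd(n,k)$. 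Writing $k=cm$ with $\ell:=n/m$ and $\gcd(c,\ell)=1$, and letting $\beta$ be the total degree of one period, the contributing monomial sits in degree $\ell\beta$, and a count of the anticommuting transpositions needed to re-sort $x_{1+k}^{a_1}\cdots x_{n+k}^{a_n}$ into standard order (the two runs of variables that must be interchanged are disjoint) shows its sign is $(-1)^{c(\ell-c)\beta}$. Summing over all $m$-periodic exponent sequences and using $\sum_{\beta\ge0}\binom{\beta+m-1}{m-1}u^{\beta}=(1-u)^{-m}$ gives $Tr(\sigma^{k},t)=\bigl(1-(-1)^{c(\ell-c)}t^{\ell}\bigr)^{-m}$.

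The sign is then pinned down by an elementary parity remark: since $\gcd(c,\ell)=1$, the integer $c(\ell-c)$ is odd exactly when $\ell$ is even (which forces $c$ odd), so $(-1)^{c(\ell-c)}=-1$ iff $n/m$ is even and $=+1$ iff $n/m$ is odd. Consequently $Tr(\sigma^{k},t)$ has a pole at $t=1$ of order $0$ when $n/m$ is even and of order $m$ when $n/m$ is odd, and hence $\Rpf(\sigma^{k})$ equals $n$ or $n-m$ accordingly. Every proper divisor $m$ of $n$ arises as $\gcd(n,k)$ (take $k=m$), so minimizing over $k$ yields $\Rpf(R,W)=n$ when no proper divisor $m\mid n$ has $n/m$ odd --- i.e. exactly when $n=2^{d}$ --- and otherwise $\Rpf(R,W)=n-n/p=n(1-1/p)$, where $p$ is the least odd prime dividing $n$; this is (2).

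For (3) I would work with $R'$, the image of $R$ in $B/I$ where $B=R\#H$, $H=(\Bbbk G)^{\circ}$, and $I=(1\#\inth)$. Applying Lemma~\ref{xxlem5.1}(4) to $f_1=\cdots=f_n=y_s$, whose $G$-degrees $g^{s},g^{2s},\dots,g^{ns}$ exhaust $G$ precisely when $\gcd(s,n)=1$, shows $y_s^{n}\#1\in I$, hence $y_s^{n}=0$ in $R'$; since $y_s^{2}=Y_{2s}$ is central in $R$, this makes $Y_{2s}$ nilpotent in the image $T'$ of the central polynomial subalgebra $T=\Bbbk[Y_0,\dots,Y_{n-1}]$. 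Next I count the distinct residues $2s\bmod n$ as $s$ runs over $(\Z/n)^{\times}$: the map $s\mapsto 2s$ is injective on $(\Z/n)^{\times}$ when $4\nmid n$ and exactly $2$-to-$1$ when $4\mid n$, producing $\phi(n)$ resp.\ $\phi(n)/2$ distinct indices $j$ for which $Y_{j}$ is nilpotent in $T'$. Since $T'$ is a commutative quotient of $\Bbbk[Y_0,\dots,Y_{n-1}]$ with those $Y_{j}$ in its nilradical, $\GKdim R'=\GKdim T'\le n-\phi(n)$ (resp.\ $n-\phi(n)/2$) by Lemma~\ref{xxlem5.2}, whence $\Pty(R,W)=\GKdim R-\GKdim R'\ge\phi(n)$ (resp.\ $\phi(n)/2$), which is (3). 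Part (4) is now immediate: for $p=2$, (1) and (2) give $\Pty(R,W)=\Rpf(R,W)=n$; for odd $p$ we have $4\nmid n=p^{d}$, so (3) gives $\Pty(R,W)\ge\phi(p^{d})=p^{d}(1-1/p)$, which is exactly $\Rpf(R,W)$ by (2). As $R=\Bbbk_{-1}[x_1,\dots,x_n]$ is noetherian connected graded Koszul AS regular (hence CM, being PI of finite global dimension) and $W\le\Aut_{gr}(R)$ is finite, the resulting inequality $\Pty(R,W)\ge\Rpf(R,W)$ is exactly Conjecture~\ref{xxcon0.9} in this case.

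The main obstacle is the sign bookkeeping in (2): one must correctly determine the parity of the number of anticommuting swaps used to reorder $\sigma^{k}$ applied to a periodic monomial, and then correctly decide when $1-(-1)^{c(\ell-c)}t^{\ell}$ actually vanishes at $t=1$ --- an off-by-one in the parity would change the final formula. A secondary point to keep in mind is that in (3) we only need an \emph{upper} bound on $\GKdim T'$, so it is unnecessary to identify $T'$ or its defining ideal exactly; it is enough that sufficiently many of the $Y_{j}$ become nilpotent.
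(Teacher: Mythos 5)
Your proposal is correct and, for parts (1), (3) and (4), follows essentially the paper's own argument: (1) is quoted from Proposition \ref{xxpro5.4}; in (3) the paper likewise gets $y_{s}^{n}=0$ in $R'$ for $\gcd(s,n)=1$ from Lemma \ref{xxlem5.1}(4) with $f_1=\cdots=f_n=y_s$, deduces $Y_{2s}^{\,n}=0$, counts the same number $N_n$ of distinct indices $2s \bmod n$ ($\phi(n)$ if $4\nmid n$, $\phi(n)/2$ if $4\mid n$), and bounds $\GKdim$ of the image of the central subalgebra $T$ via Lemma \ref{xxlem5.2}; the only cosmetic difference is that you pass to the nilradical of $T'$ while the paper argues prime-by-prime over the minimal primes of $T'$ -- these are equivalent for a commutative affine algebra. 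The one genuine divergence is in (2): the paper simply invokes \cite[Lemma 1.7]{KKZ2}, which says the order of the pole of $Tr(g,t)$ at $t=1$ equals the number of odd-length cycles of the permutation, whereas you recompute the trace series from scratch, obtaining $Tr(\sigma^{k},t)=\bigl(1-(-1)^{c(\ell-c)}t^{\ell}\bigr)^{-m}$ with $m=\gcd(n,k)$, $\ell=n/m$, $k=cm$; your sign bookkeeping (the two interchanged blocks involve disjoint anticommuting variables, giving $(-1)^{c(\ell-c)\beta}$, and $c(\ell-c)$ is odd exactly when $\ell$ is even) is correct and reproduces exactly the pole orders the cited lemma provides, so your treatment of (2) is more self-contained at the cost of a page of computation, but reaches the same formula $\Rpf(R,W)=n$ or $n(1-1/p)$. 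Your deduction of (4) is also the right reading: for odd $p$ one needs (3) (not just (1) and (2), as the paper's phrasing suggests) to get $\Pty(R,W)\geq\phi(p^{d})=\Rpf(R,W)$, and the hypotheses of Conjecture \ref{xxcon0.9} hold since $R$ is noetherian, Koszul AS regular and CM.
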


\begin{proof}
(1) This is Proposition \ref{xxpro5.4}.

(2) Let $\sigma$ be the cycle permutation sending
$x_i\to x_{i+1}$ for all $i<n$ and sending
$x_n$ to $x_1$. Let $g$ be $\sigma^i$ for some
$i<n$. Then $g$ is a product of $a$-many of
disjoint cycles of length $b$ where $ab=n$.
Let $o(g)$ be the order of the pole of $Tr(g,t)$ at
$t=1$. By \cite[Lemma 1.7]{KKZ2}, $o(g)$ is equal to the
number of odd cycles. (The authors of \cite{KKZ2}
assume that ${\rm{char}}\; \Bbbk=0$, however,
\cite[Lemma 1.7]{KKZ2} holds when ${\rm{char}}\; \Bbbk\neq 2$.)
Hence $o(g)=a=n/b$ when $b$
is odd. If $n$ is of the form $2^d$, then $o(g)=0$
and $\Rpf(R,G)=n$. If $n$ contains some odd factor,
then the maximal possible $o(g)$ attains when $b$
is the smallest odd prime factor of $n$. Therefore
$\Rpf(R,G)=n-n/p$ where $p$ is the
minimal odd prime factor of $n$. The assertion follows.

(3) As before, let $R'$
be the image of $R$ in $B/I$ and let $T$ be the subalgebra
of $R'$ generated by $\{x_i^2\}_{i=1}^n$. Then $T$ is commutative
and $\GKdim T=\GKdim R'=\GKdim B/I$. It suffice to show that
\[\GKdim T/{\mathfrak p}\leq n-
\begin{cases}
\phi(n), & 4\nmid n,\\
\frac{\phi(n)}{2}, & 4|n,
\end{cases}\]
for all minimal prime ${\mathfrak p}$ of $T$.

Let $\xi$ be a primitive $n$th root of unity.
For any $0\leq i\leq n-1$, let $y_i=\sum_{j=1}^{n} \xi^{ij} x_j$
and $Y_i=\sum_{j=1}^{n} \xi^{ij} x_j^2$. Since $x_i x_j+x_jx_i=0$
for all $i\neq j$, we have $y_i^{2}=Y_{2i}$. Since $x_i^2$ are
central in $R$ for all $i$, $Y_{i}$ are all central in $R$ for
all $i$. Note that  the commutative ring $T/{\mathfrak p}$ is generated by
$\{x_i^2\}_{i=1}^{n}$, hence by $\{Y_i\}_{i=1}^n$.

Let $N_n$ denote the cardinality of
$\{Y_{2i}\mid \gcd(i,n)=1, i=0, 1, \cdots, n-1\}$. It is easily checked that $N_n$ is
 $\phi(n)$ when $4\nmid n$; and $\frac{1}{2} \phi(n)$
when $4|n$. We give some details in the case
when $n=2m$ where $m>1$ is an odd integer. It suffices to show that
$Y_{2i_0}\neq Y_{2i_1}$ if $\gcd (i_0, n)=\gcd (i_1, n)=1$
and $0\leq i_0< i_1\leq n-1$.
If $Y_{2i_0}=Y_{2i_1}$, then $n\mid 2(i_1-i_0)$. This implies that
$m\mid (i_1-i_0)$. Since both $i_1$ and $i_0$
are odd (as $\gcd (i_0, n)=\gcd (i_1, n)=1$),
$2m \mid (i_1-i_0)$, which is impossible.
So $N_n=\phi(n)$ in this case.

Suppose $\gcd(n,i_0)=1$. By Lemma \ref{xxlem5.1}, $y_{i_0}^{n} =0$ in $R'$
(by taking $f_i=y_{i_0}$ for all $i$). Then
$Y_{2i_0}^n=y_{i_0}^{2n}=0$. Since $T/{\mathfrak p}$ is prime,
$Y_{2i_0}=0$ in $T/{\mathfrak p}$.
Therefore $T/{\mathfrak p}$ is generated by
no more than $(n-N_n)$ elements. This implies that
$\GKdim T/{\mathfrak p}\leq n-N_n$ and the assertion follows.

(4) This is an immediate consequence of parts (1,2).
\end{proof}

Now we are ready to prove Theorem \ref{xxthm0.5}.

\begin{proof}[Proof of Theorem \ref{xxthm0.5}]
Note that $G=W$. By Theorem \ref{xxthm5.7}(1,3), $\Pty(R,W)\geq 2$.
It is well-known that skew polynomial rings are CM. As $G$ is a 
subgroup of $Aut_{gr}(R)$, the $\Bbbk G$-action on $R$ is 
clearly inner faithful. Then the final assertion follows 
from Theorem \ref{xxthm0.3}.
\end{proof}

Let $S_n$ act on $\{x_1,\cdots,x_n\}$ as permutations. This
action extends to an algebraic action on both commutative
polynomial ring $\Bbbk[x_1,\cdots,x_n]$
and the skew polynomial ring $\Bbbk_{-1}[x_1,\cdots,x_n]$.
We define the pertinency for $S_n$ and
its subgroups in these cases.

\begin{definition}
\label{xxdef5.8} Suppose ${\rm{char}}\; \Bbbk=0$.
Let $G$ be a subgroup of $S_n$ that
acts naturally on $\Bbbk[x_1,\cdots,x_n]$ and $\Bbbk_{-1}[x_1,\cdots,
x_n]$ respectively. Define the {\it positive and negative
pertinency} of $G$ to be
$$\Pty_{+}(G)=\Pty(\Bbbk[x_1,\cdots,x_n],G)
\quad {\text{and}}\quad
\Pty_{-}(G)=\Pty(\Bbbk_{-1}[x_1,\cdots,x_n],G)
$$
respectively.
\end{definition}

\begin{question}
\label{xxque5.9}
Is there a combinatorial algorithm to compute
$\Pty_{+}(G)$ and $\Pty_{-}(G)$ for all $G\subseteq S_n$?
\end{question}

The second part of the above question concerning
$\Pty_{-}(G)$ was suggested by Ellen Kirkman.
See \cite[Section 7]{BHZ} for more comments.

\subsection*{Acknowledgments}
The authors thank the referee for a very careful reading and 
useful comments. Many thanks to Kenneth Chan,
Andrew Conner, Jason Gaddis, Frank Moore,
Fredy Van Oystaeyen, Chelsea Walton, Yinhuo Zhang for
sharing their (un-published) notes and interesting ideas;
special thanks to Ellen Kirkman for many useful
conversations on the subject, for suggesting
a question which leads to Theorem \ref{xxthm0.5} and
Question \ref{xxque5.9},
for sharing her ideas concerning Lemma \ref{xxlem3.10},
and for pointing out a mistake of Theorem \ref{xxthm5.7}
in an earlier version.
Both Y.-H. Bao and J.-W. He were supported by NSFC
(No. 11571239, 11401001,11671351)
and J.J. Zhang by the US National Science
Foundation (grant Nos. DMS 1402863 and 
DMS 1700825).

%\vspace{5mm}

%\bibliography{}

\end{document}